      \OR\ifentrytype{incollection}\OR\ifentrytype{inproceedings}%
      \OR\ifentrytype{inreference}}
\newcommand{\Ref}[1]{#1}
\providecommand{\nodepoint}[3]{\node[inner sep =0mm, outer sep =0mm] (#1) at (#2,#3) {\huge\bf$\cdot$}}
\newcommand{\R}{\mathbb{R}}
 \newcommand{\eqdef}{\overset{\text{def}}{=}} 
\providecommand{\Null}[1]{\mathbf{Null}\left( #1\right)}
\providecommand{\Rank}[1]{\mathbf{Rank}\left( #1\right)}
\providecommand{\Tr}[1]{\mathbf{Tr}( #1)}
\providecommand{\E}[1]{\mathbf{E}\left[ #1\right]}
\providecommand{\Esmall}[1]{\mathbf{E}[ #1]}
 \providecommand{\dotprod}[1]{\langle #1\rangle} 
\providecommand{\proj}[2]{Z}
\providecommand{\norm}[1]{\left\lVert#1\right\rVert}
\providecommand{\myRange}[1]{\mathbf{Range}\left( #1\right)}
\newtheorem{remark}{Remark}[section]
\newtheorem{proposition}{Proposition}[section]
\newtheorem{lemma}{Lemma}[section]
\newtheorem{definition}{Definition}[section]
\newtheorem{theorem}{Theorem}[section]
\title{Randomized  Quasi-Newton Updates  are  Linearly Convergent  Matrix Inversion Algorithms}
\author{Robert M. Gower and Peter Richt\'{a}rik\thanks{This author would like to acknowledge support from the EPSRC Grant EP/K02325X/1, {\em Accelerated Coordinate Descent Methods for Big Data Optimization} and the EPSRC Fellowship EP/N005538/1, {\em Randomized Algorithms for Extreme Convex Optimization.} }\\\\
\it School of Mathematics\\ \it University of Edinburgh\\\it United Kingdom}
\date{February 5, 2016 \footnote{Slightly updated on March 21, 2016. } }
\begin{document}

\maketitle

\begin{abstract}
We develop and analyze a broad family of stochastic/randomized algorithms for inverting a matrix. We also develop specialized variants  maintaining symmetry or positive definiteness of the iterates. All methods in the family  converge globally and linearly (i.e., the error decays exponentially), with explicit rates. In special cases, we obtain stochastic block variants of several quasi-Newton updates, including   bad Broyden (BB), good Broyden (GB),  Powell-symmetric-Broyden (PSB), Davidon-Fletcher-Powell (DFP) and Broyden-Fletcher-Goldfarb-Shanno (BFGS). Ours are the first stochastic versions of these updates shown to converge to an inverse of a fixed matrix. Through a dual viewpoint we uncover a fundamental link between quasi-Newton updates and approximate inverse preconditioning.  Further, we develop an adaptive variant  of randomized block BFGS, where we modify the distribution underlying the stochasticity of the method throughout the iterative process to achieve faster convergence. By inverting several matrices from varied applications, we demonstrate that AdaRBFGS is highly competitive when compared to the well established Newton-Schulz and minimal residual methods.  In particular, on large-scale problems our method outperforms the standard methods by orders of magnitude. Development of efficient methods for estimating the inverse of very large matrices is a  much needed tool for preconditioning and variable metric optimization methods in the advent of the big data era.
\end{abstract}

\newpage
\tableofcontents
\newpage

\section{Introduction}

Matrix inversion is a standard tool in numerics, needed, for instance, in computing a projection matrix or a Schur complement, which are common place calculations.  When only an approximate inverse is required, then iterative methods  are the methods of choice, for they can terminate the iterative process when the desired accuracy is reached. This can be far more efficient than using a direct method. Calculating an approximate inverse is a much needed tool in preconditioning~\cite{Saad2003} and, if the output is guaranteed to be positive definite, then the it can be  used to design variable metric optimization methods. Furthermore, iterative methods can make use of an initial estimate of the inverse when available.

The driving motivation of this work is the need to develop algorithms capable of computing the inverse of very large matrices, where standard techniques take an exacerbating amount of time or simply fail. In particular, we develop a family of randomized/stochastic methods for inverting a matrix, with  specialized variants maintaining symmetry or positive definiteness of the iterates. All methods in the family  converge globally (i.e., from any starting point) and linearly (i.e., the error decays exponentially).  We give an explicit expression for the convergence rate.

  As special cases, we obtain stochastic block variants of several quasi-Newton (qN) updates, including  bad Broyden (BB), good Broyden (GB),  Powell-symmetric-Broyden (PSB), Davidon-Fletcher-Powell (DFP) and Broyden-Fletcher-Goldfarb-Shanno (BFGS). To the best of our knowledge, these are the first stochastic versions of qN updates. Moreover, this is the first time that  qN updates are shown to be iterative methods for inverting a matrix. We offer a new interpretation of the qN methods through a Lagrangian dual viewpoint, uncovering a fundamental link between qN updates and approximate inverse preconditioning.  
  
  We develop an adaptive variant  of randomized block BFGS, in which we modify the distribution underlying the stochasticity of the method throughout the iterative process to achieve faster convergence. Through extensive numerical experiments with large matrices arising  from several applications, we show that AdaRBFGS can significantly outperform  the well established Newton-Schulz and minimal residual methods.

 \subsection{Outline} The rest of the paper is organized as follows. 
In Section~\ref{sec:contributions} we summarize the main contributions of this paper. In Section~\ref{sec:QN} we describe the qN methods, which is the main inspiration of our methods. Subsequently, Section~\ref{sec:SIMI-nonsym}  describes two  algorithms, each corresponding to a variant of the inverse equation,  for inverting general square matrices. We also provide insightful dual viewpoints for both methods. In Section~\ref{sec:SIMI-sym} we describe a method specialized to inverting symmetric matrices. Convergence in expectation is examined in Section~\ref{sec:conv}, were we consider two types of convergence: the convergence of i) the expected norm of the error, and the convergence of ii) the norm of the expected error. In Section~\ref{sec:discrete} we specialize our methods to discrete distributions, and comment on how one may construct a probability distribution leading to better complexity rates (i.e., importance sampling). We then describe a convenient probability distribution which leads to  convergence rates which can be described in terms of spectral properties of the original matrix to be inverted. In Section~\ref{sec:discretemethods} we detail several instantiations of our family of methods, and their resulting convergence rates. We show how via the choice of the parameters of the method,  we obtain {\em stochastic block variants} of several well known quasi Newton methods.
We also describe the simultaneous randomized Kaczmarz method here.
Section~\ref{sec:AdaRBFGS} is dedicated to the development of an adaptive variant of our randomized BFGS method, AdaRBFS, for inverting positive definite matrices.  This method adapts stochasticity throughout the iterative process to obtain faster practical convergence. Finally, in Section~\ref{sec:numerical} we show through numerical tests that AdaRBFGS significantly outperforms state-of-the-art  iterative matrix inversion methods on large-scale matrices.




\subsection{Notation} \label{subsec:notation}

By $I $ we denote the $n\times n$ identity matrix. Let
$\dotprod{X,Y}_{F(W^{-1})} \eqdef \Tr{X^T W^{-1}YW^{-1}}$
denote the weighted Frobenius inner product,
where $X,Y \in \R^{n\times n}$ and $W \in \R^{n\times n}$ is a symmetric positive definite ``weight'' matrix. Further, let
\begin{equation}\label{eq:98y988ff}
\norm{X}_{F(W^{-1})}^2 \eqdef \Tr{X^TW^{-1}X W^{-1}} 
= \|W^{-1/2}XW^{-1/2}\|_{F}^2,\end{equation}
where we have used the convention $F=F(I)$, since  $\|\cdot\|_{F(I)}$ is the standard Frobenius norm. Let $\|\cdot\|_{2}$ denote the induced operator norm for square matrices defined via $\|Y\|_{2} \eqdef \max_{\|v\|_2=1} \|Yv\|_2.$ Finally, for positive definite $W \in \R^{n\times n}$, we define the weighted induced norm via 
\[\|Y\|_{W^{-1}} \eqdef  \|W^{-1/2}YW^{-1/2}\|_2.\]

\subsection{Previous work}\label{sec:previous}

A widely used iterative method for inverting matrices is the Newton-Schulz method~\cite{Schulz1933} introduced in 1933, and its variants which is still subject of ongoing research~\cite{Li2010}. The drawback of the Newton-Schulz methods is that they do not converge for any initial estimate. Instead,  an initial estimate that is close to $A^{-1}$ (in some norm) is required. In contrast, the methods we present converge globally for any initial estimate. Bingham~\cite{Bingham1941} describes a method that uses the characteristic polynomial to recursively calculate the inverse, though it requires the calculating the coefficients of the polynomial when initiated, which is costly, and the method has fallen into disuse.  Goldfarb~\cite{Goldfarb1972} uses Broyden's method~\cite{Broyden1965} for iteratively inverting matrices. Our methods include a stochastic variant of Broyden's method.
 
 The approximate inverse preconditioning (\emph{AIP}) methods~\cite{Chow1998,Saad2003,Gould1998,Benzi1999} calculate an approximate inverse by minimizing the residual $\norm{XA-I}_F$ in $X$. s by applying a number of iterations of the steepest descent or minimal residual method.     A considerable drawback of the AIP methods is that the iterates are not guaranteed to be positive definite nor symmetric, even when $A$ is both. A solution to the lack of symmetry is to ``symmetrize'' the estimate between iterations. However,  then it is difficult to guarantee the quality of the new symmetric estimate.
Another solution is to calculate directly a factored form $LL^T=X$ and minimize in $L$ the residual $\norm{L^TAL-I}_F$. But now this residual is a non-convex function, and is thus difficult to minimize. A variant of our method naturally maintains symmetry of the iterates.

\section{Contributions} \label{sec:contributions}

We  now describe the main contributions of this work.

\subsection{New algorithms} \label{subsec:primal}
We develop a novel and surprisingly simple  family of stochastic algorithms for inverting matrices. The problem of finding the inverse of an $n\times n$ invertible matrix $A$ can be characterized as finding the solution to either one of the two {\em inverse equations}\footnote{One may use other equations uniquely defining the inverse, such as $AXA = A$, but we do not explore these in this paper.} $AX=I$ or $XA=I.$
Our methods make use of randomized sketching~\cite{Pilanci2014,Gower2015,Pilanci2015,Pilanci2015a} to reduce the dimension of the inverse equations in an iterative fashion. To the best of our knowledge, these are the first stochastic algorithms for inverting a matrix with global complexity rates.

In particular, our nonsymmetric  method (Algorithm~\ref{alg:asym-row}) is based on the inverse equation $AX=I$,  and performs the  {\em sketch-and-project} iteration
\begin{equation}\label{eq:98hs8s}X_{k+1} = \arg \min_{X\in \R^{n\times n} } \frac{1}{2}\norm{X - X_{k}}_{F(W^{-1})}^2 \quad \mbox{subject to } \quad  S^TAX =S^T,\end{equation}
where $S\in \R^{n\times q}$ is a random matrix drawn in an i.i.d.\ fashion from a fixed distribution $\cal{D}$, and  $W\in \R^{n\times n}$ is the positive definite ``weight'' matrix.
 The distribution $\cal D$ and matrix $W$ are the parameters of the method. Note that if we choose $q\ll n$, the constraint in the projection problem  \eqref{eq:98hs8s} will be of a much smaller dimension than the original inverse equation, and hence the iteration \eqref{eq:98hs8s} will become cheap.

In an analogous way, we   design a method based on the inverse equation $XA=I$ (Algorithm~\ref{alg:asym-col}). Adding the symmetry constraint $X=X^T$ leads to Algorithm~\ref{alg:sym}---a specialized method for  symmetric $A$ capable of maintaining symmetric iterates.

\subsection{Dual formulation}

  Besides the {\em primal formulation} described in Section~\ref{subsec:primal}---\emph{sketch-and-project}---we also provide  {\em dual formulations} of all three  methods (Algorithms~\ref{alg:asym-row}, \ref{alg:asym-col} and \ref{alg:sym}).  For instance, the dual formulation of \eqref{eq:98hs8s} is \begin{equation}\label{eq:98hs8sssds}X_{k+1} = \arg_X \min_{X,Y} \frac{1}{2}\norm{X_{k}-A^{-1}}_{F(W^{-1})}^2 \quad \mbox{subject to} \quad  X = X_k + WA^TSY^T,\end{equation}
where the minimization is performed over $X\in \R^{n\times n}$ and $ Y\in \R^{n\times q}$.  We call the dual formulation  \emph{constrain-and-approximate} as one seeks to perform the best approximation of the inverse (with respect to the weighted Frobenius distance) while constraining the search to  a random affine space of matrices passing through $X_k$. While the projection \eqref{eq:98hs8sssds} cannot be performed directly since $A^{-1}$ is not known, it can be performed indirectly via the equivalent primal formulation \eqref{eq:98hs8s}.

\subsection{Quasi-Newton updates and approximate inverse preconditioning}

As we will discuss in Section~\ref{sec:QN}, through the lens of the sketch-and-project formulation, Algorithm~\ref{alg:sym} can be seen as {\em randomized block extension of the quasi-Newton (qN) updates}~\cite{Broyden1965,Fletcher1960,Goldfarb1970,Shanno1971}. 
We distinguish here between qN methods, which are algorithms  used in optimization, and qN updates, which are  {\em matrix-update} rules used in qN methods.   Standard qN updates work with $q=1$ (``block'' refers to the choice $q>1$) and $S$ chosen in a deterministic way, depending on the sequence of iterates of the underlying optimization problem.  To the best of our knowledge, this is the first time stochastic versions of qN updates were designed and analyzed. On the other hand,  through the lens of the constrain-and-approximate formulation, our methods can be seen as {\em new variants of the approximate inverse preconditioning (\emph{AIP}) methods}~\cite{Chow1998,Saad2003,Gould1998,Benzi1999}. Moreover, the equivalence between these two formulations reveals deep connections  between what were before seen as distinct fields: the qN and AIP literature.  Our work also provides several new insights for {\em deterministic} qN updates. For instance, the {\em bad Broyden update}~\cite{Broyden1965,Griewank2012} is a particular best rank-1 update that minimizes the distance to the inverse of $A$ under the Frobenius norm. 
The {\em BFGS update}~\cite{Broyden1965,Fletcher1960,Goldfarb1970,Shanno1971} can be seen as a projection of $A^{-1}$ onto a space of rank-2 symmetric matrices. It seems this has not been observed before.

\subsection{Complexity: general results} Our framework leads to global linear convergence (i.e., exponential decay) under very weak assumptions on $\cal D$.  In particular, we provide an explicit convergence rate $\rho$ for the exponential decay of the norm of the expected error of the iterates (line~2 of Table~\ref{tab:complexity}) and the expected norm of the error (line~3 of Table~\ref{tab:complexity}), where the rate is given by
\begin{equation}\label{eq:rho}
\rho = 1- \lambda_{\min}(W^{1/2}\E{Z}W^{1/2}),
\end{equation}
where  $Z \eqdef A^TS(S^T A WA^T S)^{-1}S A^T$. We show that  $\rho$ is always bounded between $0$ and $1$. Furthermore, we provide a lower bound on $\rho$ that shows that the rate can potentially improve as the number of columns in $S$ increases. This sets our method apart from current methods for inverting matrices that lack global guarantees, such as Newton-Schulz, or the self-conditioning variants of the minimal residual method.
\begin{table}
\centering
\begin{tabular}{|c|c|}
\hline
& \\
\footnotesize $\E {X_{k+1} -A^{-1}} = \left(I  - W\E{Z}\right) \E{X_{k+1} - A^{-1}}  $ & \footnotesize Theorem 4.1\\
 & \\
\footnotesize $\norm{\E {X_{k+1} -A^{-1}}}_{W^{-1}}^2 \leq \rho^2 \; \cdot \; \norm{\E{X_{k+1} - A^{-1}}}_{W^{-1}}^2
 $ & \footnotesize Theorem~\ref{theo:normEconv}\\
 & \\
\footnotesize $ \E {\norm{X_{k+1} -A^{-1} }_{F(W^{-1})}^2 } \leq \rho \;\cdot\; \E{ \norm{X_{k+1} - A^{-1}}_{F(W^{-1})}^2}$  & \footnotesize Theorem~\ref{theo:Enormconv}\\
& \\
 \hline
 \end{tabular}
 \caption{\footnotesize Our main complexity results.}
 \label{tab:complexity}
 \end{table}
 
\subsection{Complexity: discrete distributions}
 
We detail a convenient choice of probability for discrete distributions $\cal D$ that gives easy-to-interpret convergence results depending on a scaled condition number of $A$. This way we obtain methods for inverting matrices with the same convergence rate as the randomized Kaczmarz method~\cite{Strohmer2009} and randomized coordinate descent~\cite{Leventhal2010} for solving linear systems. We also obtain importance sampling results by optimizing an upper bound on the convergence rate.

\subsection{Adaptive randomized BFGS}
  
We develop an additional highly efficient method---adaptive randomized BFGS (AdaRBFGS)---for calculating an approximate inverse of {\em positive definite matrices}. Not only does the method greatly outperform the state-of-the-art methods such as Newton-Schulz and approximate inverse preconditioning methods, but it also preserves positive definiteness, a quality not present in previous methods. Therefore, AdaRBFGS can be used to precondition positive definite systems and to design new variable-metric optimization methods.  Since the inspiration behind this method comes from the desire to design an {\em optimal adaptive} distribution for $S$ by examining the complexity rate $\rho$, this work  also highlights the importance of developing algorithms with explicit convergence rates.

%
%

\subsection{Extensions}  This work opens up many possible avenues for extensions. For instance, new efficient methods could be designed by experimenting and analyzing through our framework with different sophisticated sketching matrices $S$, such as the Walsh-Hadamard matrix~\cite{Lu2013,Pilanci2014}. Furthermore, our methods produce low rank estimates of the inverse and can be adapted to calculate low rank estimates of any matrix. They can be applied to singular matrices, in which case they converge to a particular pseudo-inverse.   Our results can be used to push forward work into stochastic variable metric optimization methods, such as the work by Leventhal and Lewis~\cite{Leventhal2011}, where they present a randomized iterative method for estimating Hessian matrices that converge in expectation with known convergence rates for any initial estimate. Stich et al.\ \cite{Stich2015} use Leventhal and Lewis' method to design a stochastic variable metric method for black-box minimization, with explicit convergence rates, and promising numeric results. We leave these and other extensions to future work.

\section{Randomization of Quasi-Newton Updates}\label{sec:QN}

Our methods are inspired by, and in some cases can be considered to be, randomized block variants of the quasi-Newton (qN) updates. Here we explain how our algorithms arise naturally from the qN setting. Readers familiar with  qN methods may  jump ahead to Section~\ref{subsec:iuh9898}.

\subsection{Quasi-Newton methods}

A problem of fundamental interest in  optimization is the unconstrained minimization problem
\begin{equation}\label{eq:opt}\min_{x\in \R^n} f(x),\end{equation}
where $f:\R^n\to \R$ is a sufficiently smooth  function. Quasi-Newton (QN) methods, first proposed by Davidon in 1959~\cite{Davidon1959}, are an extremely powerful and popular class of algorithms for solving this problem, especially in the regime of moderately large $n$. In each iteration of a QN method, one approximates the function locally around the current iterate $x_k$ by a quadratic of the form
\begin{equation}\label{eq:98h98hff}f(x_k+s)\approx f(x_k) + (\nabla f(x_k))^T s + \frac{1}{2}s^T B_k s,\end{equation}
where $B_k$ is a suitably chosen approximation of the Hessian ($B_k\approx \nabla^2 f(x_k)$). After this, a direction $s_k$ is computed by minimizing the quadratic approximation in $s$:
\begin{equation}\label{eq:QNdirection}s_k = - B_k^{-1}\nabla f(x_k),\end{equation}
assuming $B_k$ is invertible. The next iterate is then set to
\[x_{k+1} = x_k+h_k, \qquad h_k =\alpha_k s_k,\]
for a suitable choice of stepsize $\alpha_k$, often chosen by a line-search procedure (i.e., by approximately minimizing $f(x_k+ \alpha s_k)$ in $\alpha$). 

Gradient descent arises as a special case of this process by choosing $B_k$ to be constant throughout the iterations. A popular choice is $B_k=L I$, where $I$ is the identity matrix and $L\in \R_+$ is the Lipschitz constant of the gradient of $f$. In such a case, the quadratic approximation \eqref{eq:98h98hff} is a global upper bound on $f(x_k+s)$, which means that $f(x_k+s_k)$ is guaranteed to be at least as good (i.e., smaller or equal) as $f(x_k)$, leading to guaranteed descent. Newton's method also arises as a special case: by choosing $B_k = \nabla^2 f(x_k)$. These two algorithms are extreme cases on the opposite end of  a spectrum. Gradient descent benefits from a trivial  update rule for $B_k$ and from cheap iterations due to the fact that no linear systems need to be solved. However, curvature information is largely ignored, which slows down the practical convergence of the method. Newton's method utilizes the full curvature information contained in the Hessian, but requires the computation of the Hessian in each step, which is expensive for large $n$.  QN methods aim to find a sweet spot on the continuum between these two extremes. In particular, the QN methods choose  $B_{k+1}$ to be a matrix for which  the {\em secant equation} is satisfied:
\begin{equation}\label{eq:secant}B_{k+1}(x_{k+1}-x_k) = \nabla f(x_{k+1})-\nabla f(x_k).\end{equation}

The basic reasoning behind this requirement is the following: if $f$ is a convex quadratic,  then the Hessian  satisfies the secant equation for all pairs of vectors $x_{k+1}$ and $x_k$. If $f$ is not a quadratic, the reasoning is as follows. Using the fundamental theorem of calculus, we have 
\[ \left(\int_{0}^1 \nabla^2 f(x_k + t h_k) \; dt\right) (x_{k+1}-x_k)= \nabla f(x_{k+1}) -\nabla f(x_k) \eqdef y_k. \]
By selecting $B_{k+1}$ that satisfies the secant equation, we are enforcing  $B_{k+1}$ to mimic the action of the integrated Hessian along the line segment joining $x_k$ and $x_{k+1}$. Unless $n=1$, the secant equation \eqref{eq:secant} does not have a unique solution in $B_{k+1}$. All qN methods differ only in which particular solution is used. The formulas transforming $B_k$ to $B_{k+1}$ are called {\em qN updates}.

Since these matrices are used to compute the direction $s_k$ via \eqref{eq:QNdirection}, it is often more reasonable to instead maintain a sequence of inverses $X_k = B_k^{-1}$. By multiplying both sides of \eqref{eq:secant} by $X_{k+1}$, we arrive at the {\em secant equation for the inverse:} \begin{equation}\label{eq:secant2}X_{k+1}(\nabla f(x_{k+1})-\nabla f(x_k)) = x_{k+1}-x_k.\end{equation} 

 The most popular classes of qN updates choose $X_{k+1}$ as the closest matrix to $X_k$, in a suitable norm (usually a weighted Frobenius norm with various weight matrices), subject to the secant equation, often with an  explicit symmetry constraint:
\begin{equation}\label{eq:QN:project_form}X_{k+1} = \arg \min_{X \in \R^{n\times n}} \left\{ \|X-X_k\| \;:\;  X y_k = h_k, \; X = X^T\right\}.\end{equation}

\subsection{Quasi-Newton updates}

Consider now problem \eqref{eq:opt} with 
\begin{equation} \label{eq:QN_quadratic}f(x) = \frac{1}{2}x^T A x - b^T x + c,\end{equation}
where $A$ is an $n\times n$ symmetric positive definite matrix, $b\in \R^n$ and $c\in \R$. Granted, this is not a typical problem for which qN methods would be used by a practitioner. Indeed, the Hessian of $f$ does not change, and hence one {\em does not have to} track it. The problem  can simply be solved by setting the gradient to zero, which leads to the system $Ax = b$,  the  solution being $x_*=A^{-1}b$. As solving a linear system is much simpler than computing the inverse $A^{-1}$, approximately tracking the (inverse) Hessian of $f$ along the path of the iterates $\{x_k\}$---the basic strategy of all  qN methods---seems like too much effort for what is ultimately a much simpler problem.

However, and this is one of the main insights of this work, instead of viewing qN methods as optimization algorithms, we can alternatively interpret them as iterative algorithms producing a sequence of matrices, $\{B_k\}$ or $\{X_k\}$, hopefully converging to some matrix of interest. In particular, one would hope that $X_k \to A^{-1}$ if  a qN method is applied to  \eqref{eq:QN_quadratic}, with any symmetric positive definite initial guess $X_0$. In this case, the qN updates of the minimum distance variety given by \eqref{eq:QN:project_form} take the form
\begin{equation}\label{eq:QN:project_form2}X_{k+1} = \arg \min_{X \in \R^{n\times n}} \left\{ \|X-X_k\| \;:\;  X A h_k = h_k , \; X = X^T\right\}.\end{equation}

\subsection{Randomized quasi-Newton updates}\label{subsec:iuh9898}

While the motivation for our work comes from optimization, having arrived at the update \eqref{eq:QN:project_form2}, we can dispense of some of the implicit assumptions and propose and analyze a wider class of methods. In particular, in this paper we analyze a large class of {\em randomized algorithms} of the type \eqref{eq:QN:project_form2}, where the vector $h_k$ is replaced by a random matrix $S$ and $A$ is \emph{any} invertible\footnote{In fact, one can apply the method to an arbitrary real matrix $A$, in which case the iterates $\{X_k\}$ converge to the Moore-Penrose pseudo-inverse of $A$. However, this development is outside the scope of this paper, and is left for future work. }, and not necessarily symmetric or positive definite matrix. This constitutes a randomized block extension of the qN updates.

\section{Inverting Nonsymmetric Matrices} \label{sec:SIMI-nonsym}

In this paper we are concerned with the development of a family of stochastic algorithms for computing the inverse of a nonsingular matrix $A\in \R^{n\times n}$. The starting point in the development of our methods is the simple observation that the inverse $A^{-1}$ is the (unique) solution of a linear matrix equation, which we shall refer to as the {\em inverse equation}: 
\begin{equation} \label{eq:inverse_equations} AX = I.\end{equation}
Alternatively, one can use the inverse equation $XA = I$ instead. Since \eqref{eq:inverse_equations} is difficult to solve directly, our approach is to iteratively solve a small  randomly relaxed version of \eqref{eq:inverse_equations}. That is, we choose a random matrix $S\in \R^{n \times q}$, with $q\ll n$, and instead solve  the following {\em sketched inverse equation}: \begin{equation}\label{eq:inverse_eq_sketched}S^T AX = S^T.\end{equation}
If we base the method on the second inverse equation, the  sketched inverse equation  $XA S = S$ should be used instead. Note that $A^{-1}$ satisfies \eqref{eq:inverse_eq_sketched}. If $q\ll n$, the sketched inverse equation is of a much smaller dimension than the original one, and hence easier to solve. However, the equation will no longer have a unique solution and  in order to design an algorithm, we need a way of picking a particular solution. Our algorithm defines $X_{k+1}$ to be the solution that is closest to the current iterate $X_k$ in a weighted Frobenius norm. This is repeated in an iterative fashion, each time drawing $S$ independently from a fixed distribution $\cal D$. The distribution $\cal D$ and the matrix $W$ can be seen as parameters of our method. The flexibility of being able to adjust $\cal D$ and $W$ is important: by varying these parameters we obtain various specific instantiations of the generic method, with varying properties and convergence rates. This gives the practitioner the flexibility to adjust the method to the structure of $A$, to the computing environment and so on.


\subsection{Projection viewpoint: sketch-and-project}
 
The next iterate $X_{k+1}$ is the nearest point to $X_k$ that satisfies a \emph{sketched} version of  the inverse equation:
\begin{align}
\boxed{ X_{k+1} = \arg \min_{X \in \R^{n\times n}} \frac{1}{2} \norm{X - X_{k}}_{F(W^{-1})}^2 \quad \mbox{subject to } \quad  S^TAX =S^T} \label{eq:NF}
\end{align}
  In the special case when $S=I$, the only such matrix is the inverse itself, and 
\eqref{eq:NF} is not helpful. However, if  $S$ is ``simple'', \eqref{eq:NF} will be easy to compute and the hope is that through a sequence of such steps, where the matrices $S$ are sampled in an i.i.d. fashion from some distribution, $X_k$ will converge to $A^{-1}$. 

 
 Alternatively, we can sketch the equation $XA=I$ and project onto $XAS=S$:
\begin{equation}\label{eq:NFcols}
\boxed{X_{k+1} = \arg \min_{X \in \R^{n\times n}} \frac{1}{2} \norm{X - X_{k}}_{F(W^{-1})}^2 \quad \mbox{subject to } \quad  XAS =S}\end{equation}

While method~\eqref{eq:NF} sketches the rows of $A$, method~\eqref{eq:NF} sketches the columns of $A.$ Thus, we refer to~\eqref{eq:NF} as the row variant and to~\eqref{eq:NFcols} as the column variant. Both variants converge to the inverse of $A$, as will be established in Section~\ref{sec:conv}.  If $A$ is singular, then it can be shown that the iterates of~\eqref{eq:NFcols} converge to the left inverse, while the iterates of~\eqref{eq:NF} converge to the right inverse.

\subsection{Optimization viewpoint: constrain-and-approximate} 

The row sketch-and-project method can be cast in an apparently different yet equivalent way:
\begin{align}
\boxed{X_{k+1} = \arg_X \min_{X,Y} \frac{1}{2}\norm{X - A^{-1}}_{F(W^{-1})}^2 \quad \mbox{subject to} \quad  X=X_{k} +WA^TSY^T}\label{eq:RF}
\end{align}
Minimization is done over $X\in \R^{n\times n}$ and $Y\in \R^{n\times q}$. In this viewpoint, in each iteration~\eqref{eq:RF}, we select a random affine space that passes through $X_k$ and then select the point in this space that is as close as possible to the inverse. This random search space is special in that, independently of the input pair $(W,S)$, we can efficiently compute
the projection of $A^{-1}$ onto this space, without knowing $A^{-1}$ explicitly. 

Method~\eqref{eq:NFcols} also has an equivalent constrain-and-approximate formulation:
\begin{align}
\boxed{X_{k+1} = \arg_X \min_{X,Y} \frac{1}{2}\norm{X - A^{-1}}_{F(W^{-1})}^2 \quad \mbox{subject to} \quad  X=X_{k} +YS^TA^TW}\label{eq:RFcols}
\end{align}
Methods~\eqref{eq:RF} and~\eqref{eq:RFcols} can be viewed as new variants of approximate inverse preconditioning (AIP)~\cite{Benzi1999,Gould1998,Kolotilina1993,Huckle2007},
which are a class of methods for computing an approximate inverse of $A$ by minimizing  $\norm{XA-I}_F$ via iterative optimization algorithms, such as steepest descent or the minimal residual method. Our methods use a different iterative procedure (projection onto a randomly generated affine space), and work with a more general norm (weighted Frobenius norm).

\subsection{Equivalence}

We now prove that~\eqref{eq:NF} and~\eqref{eq:NFcols} are equivalent to~\eqref{eq:RF} and~\eqref{eq:RFcols}, respectively, and give their explicit solution.
 \begin{theorem}\label{theo:NFRF}
Viewpoints~\eqref{eq:NF} and~\eqref{eq:RF} are equivalent to~\eqref{eq:NFcols} and~\eqref{eq:RFcols}, respectively. 
Further, if $S$ has full column rank, then the explicit solution to~\eqref{eq:NF}  is
   \begin{equation}
  \boxed{ X_{k+1} = X_{k} +WA^TS(S^T A WA^T S)^{-1}S^T(I-AX_{k})} \label{eq:Xupdate}
   \end{equation}
   and the explicit solution to~\eqref{eq:NFcols} is
   \begin{equation}
   \boxed{X_{k+1} = X_{k} +(I-X_{k}A^T)S(S^T A^T WA S)^{-1}S^T A^TW} \label{eq:Xupdatecols}
   \end{equation}
\end{theorem}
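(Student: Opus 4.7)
The plan is to derive the closed-form \eqref{eq:Xupdate} for the row problem \eqref{eq:NF} via Lagrangian KKT conditions, and then observe that the resulting parametrization of $X_{k+1}$ is exactly the affine feasible set of \eqref{eq:RF}, which establishes the equivalence of the primal and dual viewpoints. The column pair \eqref{eq:NFcols}--\eqref{eq:RFcols} together with the formula \eqref{eq:Xupdatecols} will then follow by a structurally identical argument after swapping pre- and post-multiplication.

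For the row case I would introduce a Lagrange multiplier $\Lambda\in\R^{q\times n}$ and work with
\[
L(X,\Lambda)=\tfrac{1}{2}\norm{X-X_k}_{F(W^{-1})}^2+\Tr{\Lambda^T(S^TAX-S^T)}.
\]
The two matrix-calculus identities $\nabla_X\tfrac{1}{2}\norm{X-X_k}_{F(W^{-1})}^2=W^{-1}(X-X_k)W^{-1}$ and $\nabla_X\Tr{\Lambda^TS^TAX}=A^TS\Lambda$ reduce stationarity to $X_{k+1}=X_k-WA^TS\Lambda W$. Setting $Y^T\eqdef-\Lambda W$ rewrites this as $X_{k+1}=X_k+WA^TSY^T$, which is exactly the affine constraint in \eqref{eq:RF}. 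Substituting into the sketched inverse equation $S^TAX_{k+1}=S^T$ yields
\[
(S^TAWA^TS)\,Y^T=S^T(I-AX_k).
\]
Because $A$ is invertible and $W\succ 0$ we have $AWA^T\succ 0$, so full column rank of $S$ makes $S^TAWA^TS$ invertible; solving uniquely for $Y^T$ and substituting back gives \eqref{eq:Xupdate}. To close the equivalence I would plug the parametrization $X=X_k+WA^TSY^T$ directly into the objective of \eqref{eq:RF}, take the gradient in $Y^T$, and use $(WA^TS)^TW^{-1}=S^TA$ to recover exactly the same linear system for $Y^T$; strict convexity then implies that \eqref{eq:NF} and \eqref{eq:RF} produce identical iterates.

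The column case is handled by the mirror-image argument: the Lagrangian for \eqref{eq:NFcols} forces $X_{k+1}=X_k+YS^TA^TW$ at stationarity (matching the feasible set of \eqref{eq:RFcols}), and imposing $X_{k+1}AS=S$ produces a linear system in $Y$ with coefficient matrix $S^TA^TWAS$, invertible under the same full column rank assumption, whose solution yields \eqref{eq:Xupdatecols}. The only delicate point throughout, and what I expect to be the main source of slip-ups rather than any conceptual difficulty, is the matrix-calculus bookkeeping: keeping straight which side of $X$ the sketch $S$ acts on, and whether the free variable is named $Y$, $Y^T$, or $\Lambda$; once the transposes are aligned, all four equivalences collapse to the same one-line linear algebra.
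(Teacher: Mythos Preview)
Your argument is correct and close in spirit to the paper's, but the execution differs in two respects worth noting. First, the paper begins by reducing to the case $W=I$ via the change of variables $\hat{X}=W^{-1/2}XW^{-1/2}$, $\hat{A}=W^{1/2}AW^{1/2}$, $\hat{S}=W^{-1/2}S$, carries out the analysis there, and then transforms back; you instead work directly with general $W$ throughout, which is slightly cleaner provided the matrix calculus is handled carefully (and yours is). Second, the paper establishes the equivalence of \eqref{eq:NF} and \eqref{eq:RF} by invoking strong Lagrangian duality explicitly: it writes the Lagrangian, computes the inner minimum in $X$, and manipulates the resulting expression until it is recognized as the objective of \eqref{eq:RF}. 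Your route is more elementary: you observe that the KKT stationarity condition for \eqref{eq:NF} already forces $X_{k+1}$ to lie in the affine set $\{X_k+WA^TSY^T\}$, and then check directly that optimizing \eqref{eq:RF} over $Y$ yields the same normal equations $(S^TAWA^TS)Y^T=S^T(I-AX_k)$. Both arguments rest on the same Lagrangian, but yours bypasses the min--max swap and the algebraic massaging of the dual objective, at the cost of a small separate verification step. Either way the invertibility of $S^TAWA^TS$ (from full column rank of $S$ and $AWA^T\succ 0$) is the only substantive hypothesis used, and both arguments transfer to the column variant by the transposition symmetry you describe.
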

\begin{proof}
We will prove all the claims for the row variant, that is, we prove that~\eqref{eq:NF} are~\eqref{eq:RF} equivalent and that their solution is given by~\eqref{eq:Xupdate}. The remaining claims, that~\eqref{eq:NFcols} are~\eqref{eq:RFcols} are equivalent and that their solution is given by~\eqref{eq:Xupdatecols}, follow with analogous arguments.  It suffices to consider the case when $W=I$, as we can perform a change of variables to recover the solution for any $W$. Indeed, in view of \eqref{eq:98y988ff}, with the change of variables
\begin{equation}\label{eq:varchangeW}\hat{X}\eqdef W^{-1/2}X W^{-1/2}, \quad  \hat{A} \eqdef W^{1/2}AW^{1/2}, \quad \hat{S} \eqdef W^{-1/2}S,
\end{equation}
Equation \eqref{eq:NF} becomes
\begin{align}
\min_{\hat{X} \in \R^{n\times n}} \frac{1}{2} \|\hat{X} - \hat{X}_{k}\|_{F}^2 \quad \mbox{subject to } \quad  \hat{S}^T\hat{A}\hat{X} =\hat{S}^T. \label{eq:NFbar}
\end{align}
Moreover, if we let $\hat{Y} = W^{-1/2}Y$, then~\eqref{eq:RF} becomes
\begin{align}
\min_{\hat{X} \in \R^{n\times n}, \hat{Y}\in \R^{n\times q}} \frac{1}{2} \|\hat{X} - \hat{A}^{-1}\|_{F}^2 \quad \mbox{subject to } \quad  \hat{X} =\hat{X}_k + \hat{A}^T\hat{S} \hat{Y}^T. \label{eq:RFbar}
\end{align}

By substituting the constraint in~\eqref{eq:RFbar} into the objective function, then differentiating to find the stationary point, we obtain that
\begin{equation} \label{eq:Xupdatebar}
\hat{X} = \hat{X}_k +\hat{A}^T\hat{S}(\hat{S}^T\hat{A}\hat{A}^T\hat{S})^{-1}\hat{S
}^T(I-\hat{A}\hat{X}_k),\end{equation}
is the solution to~\eqref{eq:RFbar}. Changing the variables back using~\eqref{eq:varchangeW}, \eqref{eq:Xupdatebar} becomes~\eqref{eq:XZupdate}.

Now we prove the equivalence of~\eqref{eq:NFbar} and~\eqref{eq:RFbar} using Lagrangian duality. The sketch-and-project viewpoint~\eqref{eq:NFbar} has a convex quadratic objective function with linear constraints, thus strong duality holds. 
Introducing Lagrangian multiplier $\hat{Y} \in \R^{n\times q}$, the Langrangian dual of~\eqref{eq:NFbar} is given by
\begin{equation}\label{eq:lagdual}
L(\hat{X},\hat{Y})=   \frac{1}{2}\|\hat{X}-\hat{X}_k\|_{F}^2 - \dotprod{{\hat{Y}}^T,\hat{S}^T\hat{A}(\hat{X}-\hat{A}^{-1})}_{F}.
\end{equation}
Clearly,
\[\eqref{eq:NFbar} =\min_{X\in\R^{n\times n}} \max_{\hat{Y} \in\R^{n\times q}} L(\hat{X}, \hat{Y}).\]
  We will now prove that
\[\eqref{eq:RFbar} =\max_{\hat{Y} \in\R^{n\times q}} \min_{X\in\R^{n\times n}}  L(\hat{X}, \hat{Y}),\]
thus proving that~\eqref{eq:NFbar} and~\eqref{eq:RFbar} are equivalent by strong duality. 
Differentiating the Lagrangian in $\hat{X}$ and setting to zero gives
\begin{equation}\label{eq:primopt1}
\hat{X} = \hat{X}_k +\hat{A}^T\hat{S}{\hat{Y}}^T.
\end{equation}
Substituting  into~\eqref{eq:lagdual} gives
\[
L(\hat{X},\hat{Y}) =  \frac{1}{2}\|\hat{A}^T\hat{S}\hat{Y}^T\|_{F}^2 - \dotprod{\hat{A}^T\hat{S}{\hat{Y}}^T, \hat{X}_k +\hat{A}^T\hat{S}{\hat{Y}}^T -\hat{A}^{-1}}_{F} 
= -  \frac{1}{2} \| \hat{A}^T\hat{S}{\hat{Y}}^T \|_{F}^2 - \dotprod{\hat{A}^T\hat{S}{\hat{Y}}^T, \hat{X}-\hat{A}^{-1} }_{F}.
\] 
Adding $\pm \frac{1}{2}\| \hat{X}_k -\hat{A}^{-1} \|_{F}^2$ to the above, we get 
$L(\hat{X},\hat{Y}) =   -\frac{1}{2}\| \hat{A}^T\hat{S}{\hat{Y}}^T+\hat{X}_k-\hat{A}^{-1} \|_{F}^2+\frac{1}{2}\| \hat{X}_k -\hat{A}^{-1} \|_{F}^2$. Finally,  substituting~\eqref{eq:primopt1} into the last equation,  minimizing in $\hat{X}$, then maximizing in $\hat{Y}$, and dispensing of the term $\frac{1}{2}\|\hat{X}_k -\hat{A}^{-1}\|_{F}^2$ as it does not depend on $\hat{Y}$ nor $\hat{X}$, we obtain the dual problem:
\[ \max_{\hat{Y}}\min_{\hat{X}} L(\hat{X},\hat{Y}) =\min_{\hat{X},\hat{Y}} \frac{1}{2}\|\hat{X}-\hat{A}^{-1}\|_{F}^2 \quad \mbox{subject to} \quad \hat{X} =\hat{X}_k+ \hat{A}^T\hat{S}{\hat{Y}}^T.\]
It now remains to change variables using~\eqref{eq:varchangeW} and set $Y = W^{1/2}\hat{Y}$ to obtain~\eqref{eq:RF}.
\end{proof}

\begin{algorithm}[!t]
\begin{algorithmic}[1]
\State \textbf{input:} invertible matrix $A \in \R^{n\times n}$
\State \textbf{parameters:} ${\cal D}$ = distribution over random matrices; positive definite matrix $W\in \R^{n\times n}$
\State \textbf{initialize:} arbitrary square matrix $X_0\in \R^{n\times n}$
\For {$k = 0, 1, 2, \dots$}
	\State Sample an independent copy $S\sim {\cal D}$
	\State Compute $\Lambda = S(S^T A WA^T S)^{-1}S^T$
    \State $X_{k+1} = X_{k} + WA^T\Lambda (I-AX_{k})$
    \Comment This is equivalent to \eqref{eq:NF} and \eqref{eq:RF}
\EndFor
\State \textbf{output:} last iterate $X_k$
\end{algorithmic}
\caption{Stochastic Iterative Matrix Inversion (SIMI) -- nonsym.\ row variant}
\label{alg:asym-row}
\end{algorithm}

\begin{algorithm}[!h]
\begin{algorithmic}[1]
\State \textbf{input:} invertible matrix $A \in \R^{n\times n}$
\State \textbf{parameters:} ${\cal D}$ = distribution over random matrices; pos.\ def.\ matrix $W\in \R^{n\times n}$
\State \textbf{initialize:} arbitrary square matrix $X_0\in \R^{n\times n}$
\For {$k = 0, 1, 2, \dots$}
	\State Sample an independent copy $S\sim {\cal D}$
	\State Compute $\Lambda = S(S^T A^T WA S)^{-1}S^T$
    \State $X_{k+1} = X_{k} + (I-X_{k} A^T) \Lambda  A^T W$
    \Comment This is equivalent to \eqref{eq:NFcols} and \eqref{eq:RFcols}
\EndFor
\State \textbf{output:} last iterate $X_k$
\end{algorithmic}
\caption{Stochastic Iterative Matrix Inversion (SIMI) -- nonsymmetric column variant}
\label{alg:asym-col}
\end{algorithm}

Based on Theorem~\ref{theo:NFRF}, we can summarize the methods described in this section as Algorithm~\ref{alg:asym-row} and Algorithm~\ref{alg:asym-col}. The explicit formulas~\eqref{eq:Xupdate} and~\eqref{eq:Xupdatecols} for \eqref{eq:NF} and~\eqref{eq:NFcols} allow us to efficiently implement these methods, and  facilitate  convergence analysis. In particular, we now see that the convergence analysis of~\eqref{eq:Xupdatecols} follows trivially from analyzing~\eqref{eq:Xupdate}. 
 This is because~\eqref{eq:Xupdate} and~\eqref{eq:Xupdatecols} differ only in terms of a transposition.That is, transposing~\eqref{eq:Xupdatecols} gives $X_{k+1}^T = X_{k}^T +WAS(S^T A^T WA S)^{-1}S^T(I-A^TX_{k}^T)$,
which is the solution to the row variant of the sketch-and-project viewpoint but where the equation $A^TX^T = I$ is sketched instead of $AX=I.$ Thus, since the weighted Frobenius norm is invariant under transposition, it suffices to study the convergence of~\eqref{eq:Xupdate};  convergence of~\eqref{eq:Xupdatecols} follows by simply swapping the role of $A$ for $A^T.$ We collect this observation is the following remark.
\begin{remark}\label{rem:alg2conv}
The expression for the rate of convergence of Algorithm~\ref{alg:asym-col}  is the same as the expression for the rate of convergence of Algorithm~\ref{alg:asym-row}, but with every occurrence of $A$ swapped for $A^T.$ 
\end{remark}

%
%
%
%

\subsection{Relation to multiple linear systems}
\label{sec:relationlinear}



Any iterative method for solving linear systems can be applied to the $n$ linear systems that define the inverse through $AX=I$ to obtain an approximate inverse. However,
not all methods for solving linear systems can be applied to solve these $n$ linear systems simultaneously,  which is necessary for efficient matrix inversion.

The recently proposed methods in~\cite{Gower2015} for solving linear systems can be easily and efficiently generalized to inverting a matrix, and the resulting method is equivalent to our row variant method~\eqref{eq:NF} and~\eqref{eq:RF}.  To show this, we perform the change of variables  $\hat{X}_k= X_kW^{-1/2},$ 
$\hat{A} =W^{1/2} A$ and $\hat{S} = W^{-1/2}S$ then~\eqref{eq:NF} becomes
\[\hat{X}_{k+1} \eqdef X_{k+1}W^{-1/2} = \arg \min_{\hat{X} \in \R^{n\times n}} \frac{1}{2} \|W^{-1/2}(\hat{X} - \hat{X}_{k})\|_{F}^2 \quad \mbox{subject to} \quad   \hat{S}^T\hat{A}\hat{X}=\hat{S}^T.\]
The above is a separable problem and each column of $\hat{X}_{k+1}$ can be calculated separately. 
Let $\hat{x}_{k+1}^i$ be the $i$th column of $\hat{X}_{k+1}$ which can be calculated through
\[\hat{x}_{k+1}^i = \arg \min_{\hat{x} \in \R^n}\frac{1}{2} \|W^{-1/2}(\hat{x} - \hat{x}_{k}^i)\|_{2}^2 \quad \mbox{subject to} \quad  \hat{S}^T \hat{A}\hat{x} =\hat{S}^Te_i.
\]
The above was proposed as a method for solving linear systems in~\cite{Gower2015} applied to the system $\hat{A}\hat{x} = e_i.$ Thus, the convergence results established in~\cite{Gower2015} 
carry over to our row variant~\eqref{eq:NF} and~\eqref{eq:RF}. In particular, the theory in~\cite{Gower2015} proves that the expected norm difference of each column of $W^{-1/2}X_k$ converges to $W^{-1/2}A^{-1}$ with rate $\rho$ as defined in~\eqref{eq:rho}.
This equivalence breaks down when we impose additional matrix properties through constraints, such as symmetry.

\section{Inverting Symmetric Matrices} \label{sec:SIMI-sym}

When $A$ is symmetric, it may be useful to maintain symmetry in the iterates, in which case the nonsymmetric methods---Algorithms~\ref{alg:asym-row} and \ref{alg:asym-col}---have an issue, as they do not guarantee that the iterates are symmetric. However, we can modify~\eqref{eq:NF} by adding a symmetry constraint. The resulting \emph{symmetric} method naturally maintains symmetry in the iterates. 
 
\subsection{Projection viewpoint: sketch-and-project}

The new iterate $X_{k+1}$ is the result of projecting $X_k$ onto the space of matrices that satisfy a sketched inverse equation and that are also symmetric, that is
\begin{align}
\boxed{ X_{k+1} = \arg \min_{X \in \R^{n\times n}}\frac{1}{2} \norm{X - X_{k}}_{F(W^{-1})}^2 \quad \mbox{subject to}\quad   S^TAX =S^T,  \quad X = X^T} \label{eq:NFsym}
\end{align}
 See Figure~\ref{fig:proj} for an illustration of the symmetric update~\eqref{eq:NFsym}. 

 This viewpoint can be seen as a randomized block version of the quasi-Newton (qN) methods~\cite{Goldfarb1970,Greenstadt1969}, as detailed in Section~\ref{sec:QN}.
 The flexibility in using a weighted norm is important for choosing a norm that better reflects the geometry of the problem. For instance, when $A$ is symmetric positive definite, it turns out that $W^{-1} =A$ results in a good method.  This added freedom of choosing an appropriate weighting matrix has proven very useful in the qN literature, in particular, the highly successful BFGS method~\cite{Broyden1965,Fletcher1960,Goldfarb1970,Shanno1971} selects $W^{-1}$ as an estimate of the Hessian matrix.

\begin{figure}
\centering
\resizebox{0.4\textwidth}{!}{
\begin{tikzpicture}[>=triangle 45,font=\sffamily, ] 
   \node (planes)  at (0,0) {\includegraphics[width =10cm]{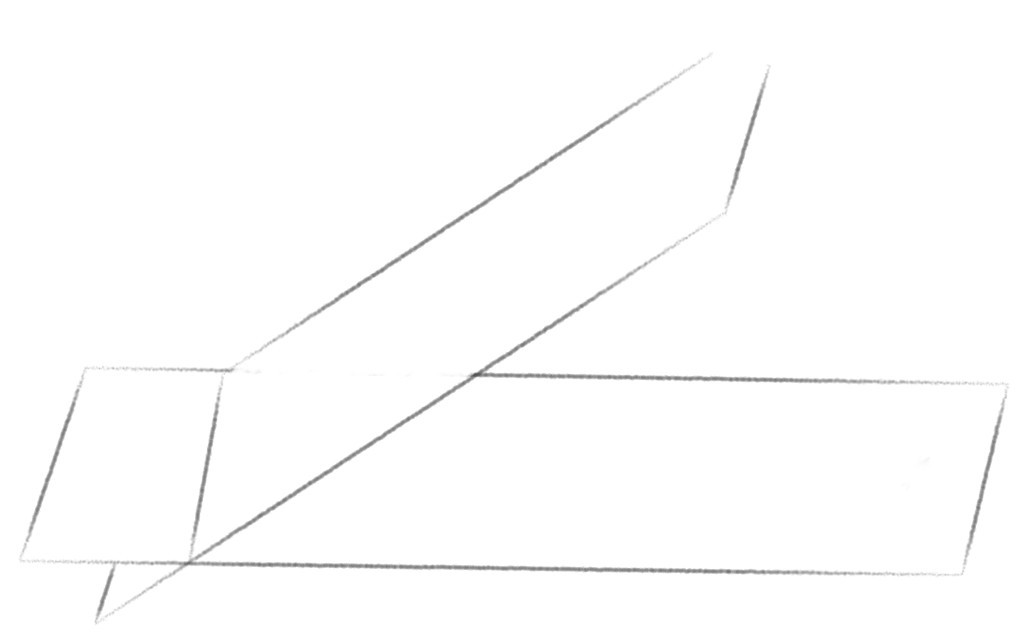}};
   \node (sym) at (3.5cm,-1cm) {$\{X \; | \; X= X^T\}$};   
   \nodepoint{Prevp}{3.7cm}{-2.1cm};
   \node[inner sep =0mm, outer sep =0mm, right = 0pt of  Prevp] (Prev) {$X_{k}$};
   \node (action) at (2.5cm,2.5cm) {$\left\{ X \; | \; S^TAX= S^T\right\}$};
   \nodepoint{Nextp}{-3.1cm}{-2cm};
   \node[inner sep =0mm, outer sep =0mm, left = 0pt of  Nextp] (Next) {$X_{k+1}$}; 
   \nodepoint{Invp}{-2.9cm}{-1cm};
   \node[inner sep =0mm, outer sep =0mm, left = 0pt of  Invp] (Inv) {$A^{-1}$};  
    \draw [semithick,->] (Prevp) -- (Nextp)  node [midway,above=0.0cm] {Projection};
    \draw (Nextp)++(0.05,0.3cm) --  ++ (0.3cm,0.0cm) -- ++ (-0.05cm, -0.3cm);
    \node[inner sep =0mm, outer sep =0mm] (orthodot) at (-3.1cm+0.17cm,-2cm+0.15cm) {$\cdot$};
\end{tikzpicture}}
\caption{\footnotesize The new estimate $X_{k+1}$ is obtained by projecting $X_{k}$ onto the affine space formed by intersecting $\{X \; | \; X= X^T\}$ and $\left\{ X \; |\; S^TAX= S^T\right\}$.}
\label{fig:proj}
\end{figure}

\subsection{Optimization viewpoint: constrain-and-approximate} 
The viewpoint~\eqref{eq:NFsym} also has an interesting dual viewpoint:
\begin{equation}
\boxed{X_{k+1} = \arg_{X}  \min_{X, Y} \frac{1}{2}\norm{X -A^{-1}}_{F(W^{-1})}^2 \quad \mbox{subject to} \quad
X = X_k + \frac{1}{2}(YS^TAW + WA^TSY^T)} \label{eq:RFsym}
\end{equation}
The minimum is taken over $X\in \R^{n\times n}$ and $Y\in \R^{n\times q}$. The next iterate, $X_{k+1}$, is the best approximation to $A^{-1}$ restricted to a random affine space of symmetric matrices.
Furthermore, \eqref{eq:RFsym} is a symmetric equivalent of~\eqref{eq:RF}; that is, the constraint in~\eqref{eq:RFsym} is the result of intersecting the constraint in~\eqref{eq:RF} with the space of symmetric matrices. 
%

\subsection{Equivalence}

We now prove that the two viewpoints~\eqref{eq:NFsym} and~\eqref{eq:RFsym} are equivalent, and show their explicit solution.
 \begin{theorem}
If $A$ and $X_k$ are symmetric, then the viewpoints~\eqref{eq:NFsym} and~\eqref{eq:RFsym} are equivalent. That is, they define the same $X_{k+1}$. Furthermore, if $S$ has full column rank, and we let $\Lambda \eqdef (S^T A WA S)^{-1}$,  then the explicit solution to ~\eqref{eq:NFsym} and~\eqref{eq:RFsym} is
\begin{align}&\boxed{X_{k+1} =X_{k}-(X_{k}A-I)S\Lambda S^T A W +WAS\Lambda S^T (AX_{k}-I)(AS\Lambda S^TAW-I)} \label{eq:Xupdatesym}
 \end{align}
\end{theorem}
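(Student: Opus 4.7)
The plan closely parallels the proof of Theorem~\ref{theo:NFRF}, the only new ingredient being the symmetry constraint. First, I would reduce to the case $W=I$ by the change of variables $\hat X = W^{-1/2} X W^{-1/2}$, $\hat A = W^{1/2} A W^{1/2}$, $\hat S = W^{-1/2} S$, exactly as in the proof of Theorem~\ref{theo:NFRF}. Because $W$ is symmetric positive definite, this map preserves symmetry of both $A$ and $X$ and carries \eqref{eq:NFsym} and \eqref{eq:RFsym} into the analogous problems in the hatted variables with $W$ replaced by $I$.

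Next, I would establish the equivalence of the two viewpoints via Lagrangian duality. The primal \eqref{eq:NFsym} is a strictly convex quadratic with linear equality constraints, so strong duality holds. Introducing a Lagrange multiplier $Y \in \R^{n\times q}$ for the sketch equation and a multiplier $\Gamma \in \R^{n\times n}$ for the symmetry constraint $X = X^T$, the $X$-gradient of the Lagrangian vanishes at
\[ X - X_k = A^T S Y^T + (\Gamma - \Gamma^T). \]
Enforcing $X = X^T$ determines the antisymmetric piece as $\Gamma - \Gamma^T = \tfrac{1}{2}(Y S^T A - A^T S Y^T)$, giving
\[ X - X_k = \tfrac{1}{2}(A^T S Y^T + Y S^T A), \]
which is precisely the affine constraint in \eqref{eq:RFsym}. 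Substituting this back into the Lagrangian, completing the square by adding $\pm \tfrac{1}{2}\|X_k - A^{-1}\|_F^2$, and then minimizing in $X$ and maximizing in $Y$ (as in Theorem~\ref{theo:NFRF}) recovers \eqref{eq:RFsym}.

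Finally, to derive the explicit formula I would substitute the parameterization $X_{k+1} = X_k + \tfrac{1}{2}(AS Y^T + Y S^T A)$ (using $A = A^T$) into $S^T A X_{k+1} = S^T$, obtaining
\[ \Lambda^{-1} Y^T + S^T A Y S^T A = 2\, S^T(I - A X_k). \]
Right-multiplying by $AS$ and setting $Z \eqdef Y^T A S$ turns this into $\Lambda^{-1} Z + (\Lambda^{-1} Z)^T = 2\, S^T(I - A X_k) AS$, so only the symmetric part of $\Lambda^{-1} Z$ is prescribed while the antisymmetric part is free, reflecting non-uniqueness of $Y$ but uniqueness of $X_{k+1}$. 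Crucially, the right-hand side $S^T(I - A X_k) A S = S^T A S - S^T A X_k A S$ is symmetric, because both $A$ and $X_k$ are; so I may take the particular solution $\Lambda^{-1} Z = S^T(I - A X_k) A S$. Back-substituting yields $Y^T = \Lambda S^T(I - A X_k)(2 I - A S \Lambda S^T A)$, and feeding this into the parameterization---then using the identity $S^T(I - A X_k) A S = S^T A (I - A X_k)^T S$ (again by symmetry of $A, X_k$) to collapse the two resulting cross terms into one---produces the boxed formula \eqref{eq:Xupdatesym}, after rewriting $(I - A X_k)^T = -(X_k A - I)$. The main obstacle is untangling the coupled matrix equation for $Y$; the decisive observation that makes it solvable is the symmetry of $S^T(I - A X_k) A S$, inherited from the symmetry of $A$ and $X_k$.
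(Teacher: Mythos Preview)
Your duality argument is essentially the paper's: same change of variables to reduce to $W=I$, same Lagrangian with multipliers $Y$ and $\Gamma$, same elimination of $\Gamma$ via the symmetry constraint to obtain the parameterization $X-X_k=\tfrac12(A^TSY^T+YS^TA)$. One point you pass over too quickly: after substituting this parameterization into the Lagrangian, the reduction to $-\tfrac12\|X-A^{-1}\|_F^2+\text{const}$ is \emph{not} literally ``as in Theorem~\ref{theo:NFRF}''. In the nonsymmetric case the substitution is $X-X_k=A^TSY^T$ and the algebra is immediate; here you must first replace $\langle A^TSY^T,\,\Theta+X_k-A^{-1}\rangle_F$ by $\langle \Theta,\,\Theta+X_k-A^{-1}\rangle_F$, where $\Theta=\tfrac12(A^TSY^T+YS^TA)$. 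This works because $A^TSY^T-\Theta$ is skew-symmetric while $\Theta+X_k-A^{-1}$ is symmetric, so their Frobenius inner product vanishes. The paper isolates this as a separate identity (its equation~\eqref{eq:symmob}); you should make it explicit rather than fold it into ``as before''.

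For the closed-form update~\eqref{eq:Xupdatesym}, the paper does not derive it at all but cites \cite{Gower2014c,Hennig2015}. Your direct derivation---plugging the symmetric parameterization into $S^TAX_{k+1}=S^T$, right-multiplying by $AS$ to obtain a Sylvester-type equation $\Lambda^{-1}Z+(\Lambda^{-1}Z)^T=2S^T(I-AX_k)AS$, observing that the right-hand side is symmetric because $A$ and $X_k$ are, picking the obvious particular solution, and back-substituting---is correct and self-contained. The identity $MA=AM^T$ for $M=I-AX_k$ (equivalently, your $S^T(I-AX_k)AS=S^TA(I-AX_k)^TS$) is exactly what collapses the two halves of the symmetrized update into the boxed form, so your instinct about where symmetry is doing the work is right.
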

\begin{proof}
It was recently shown in~\cite[Section~2]{Gower2014c} and~\cite[Section~4]{Hennig2015}\footnote{To re-interpret methods for solving linear systems through Bayesian inference, Hennig constructs estimates of the inverse system matrix using the sampled action of a matrix taken during a linear solve~\cite{Hennig2015}.}  that~\eqref{eq:Xupdatesym} is the solution to~\eqref{eq:NFsym}. We now prove the equivalence of~\eqref{eq:NFsym} and~\eqref{eq:RFsym} using Lagrangian duality.  It suffices to prove the claim for  $W=I$ as we did in the proof of Theorem~\ref{theo:NFRF}, since the change of variables~\eqref{eq:varchangeW} applied to~\eqref{eq:NFsym} shows that~\eqref{eq:NFsym} is equivalent to
\begin{equation}
\min_{\hat{X} \in \R^{n\times n}} \frac{1}{2} \|\hat{X} - \hat{X}_{k}\|_{F}^2 \quad \mbox{subject to} \quad  \hat{S}^T\hat{A}\hat{X} =\hat{S}^T, \quad \hat{X}= \hat{X}^T. \label{eq:NFsymbar}
\end{equation}
Since~\eqref{eq:NFsym} has a convex quadratic objective with linear constraints, strong duality holds. Thus we will derive a dual formulation for~\eqref{eq:NFsymbar} then use the change of coordinates~\eqref{eq:varchangeW} to recover the solution to~\eqref{eq:NFsym}. 
Let $\hat{Y} \in \R^{n\times q}$ and $\Gamma \in \R^{n\times n}$ and consider the Lagrangian of~\eqref{eq:NFsymbar} which is
\begin{equation}\label{eq:lagsym}
L(\hat{X},\hat{Y}, \Gamma)= \frac{1}{2}\|\hat{X}-\hat{X}_k\|_F^2 - \dotprod{\hat{Y}^T,\hat{S}^T\hat{A}(\hat{X}-\hat{A}^{-1})}_{F} - \dotprod{\Gamma,\hat{X}-\hat{X}^T}_F.
\end{equation}
 Differentiating in $\hat{X}$ and setting to zero gives
\begin{equation}\label{eq:optsym}
\hat{X} = \hat{X}_k+ \hat{A}^T\hat{S}\hat{Y}^T +\Gamma-\Gamma^T.
\end{equation}
Applying the symmetry constraint $X=X^T$ gives
$\Gamma-\Gamma^T = \frac{1}{2}(\hat{Y}\hat{S}^T\hat{A} - \hat{A}^T\hat{S}\hat{Y}^T)$.
 Substituting the above into~\eqref{eq:optsym} gives
\begin{equation}\label{eq:optsym2}
\hat{X} = \hat{X}_k +\frac{1}{2}(\hat{Y}\hat{S}^T\hat{A} + \hat{A}^T\hat{S}\hat{Y}^T ).
\end{equation}
Now let $\Theta = \frac{1}{2}(\hat{Y}\hat{S}^T\hat{A} + \hat{A}^T\hat{S}\hat{Y}^T)$ and note that, since the matrix $\Theta+\hat{X}_k -\hat{A}^{-1}$ is symmetric, we get
\begin{equation}\label{eq:symmob}
 \dotprod{\hat{A}^T\hat{S}\hat{Y}^T,\Theta+\hat{X}_k -\hat{A}^{-1}}_F =  \dotprod{\Theta ,\Theta+\hat{X}_k -\hat{A}^{-1}}_F.  \end{equation}
Substituting~\eqref{eq:optsym2}  into~\eqref{eq:lagsym} gives
$
L(\hat{X},\hat{Y},\Gamma) = \frac{1}{2}\norm{\Theta}_F^2 - \dotprod{\hat{A}^T\hat{S}\hat{Y}^T,\Theta+\hat{X}_k -\hat{A}^{-1}}_F 
 \overset{\eqref{eq:symmob}}{=} \frac{1}{2}\norm{\Theta}_F^2 - \dotprod{\Theta,\Theta+\hat{X}_k -\hat{A}^{-1}}_F
 = -\frac{1}{2}\norm{\Theta}_F^2 -\dotprod{\Theta,\hat{X}_k -\hat{A}^{-1}}_F$. Adding $\pm\frac{1}{2}\|\hat{X}_k-\hat{A}^{-1}\|_F^2$ to the above, we obtain 
$L(\hat{X},\hat{Y},\Gamma) = -\frac{1}{2}\|\Theta +\hat{X}_k -\hat{A}^{-1}\|_F^2+\frac{1}{2}\|\hat{X}_k-\hat{A}^{-1}\|_F^2. $
Finally, using~\eqref{eq:optsym2} and maximizing over $\hat{Y}$ then minimizing over $X$ gives the dual:
\[\min_{\hat{X},\hat{Y}} \frac{1}{2}\|\hat{X} -\hat{A}^{-1}\|_F^2 \quad \mbox{subject to} \quad \hat{X} = \hat{X}_k + \frac{1}{2}( \hat{Y}	\hat{S}^T\hat{A} + \hat{A}^T\hat{S}\hat{Y}^T ). \]
It now remains to change variables according to~\eqref{eq:varchangeW} and set $Y = W^{1/2}\hat{Y}.$
\end{proof}


\begin{algorithm}[!h]
\begin{algorithmic}[1]
\State \textbf{input:} symmetric invertible matrix $A \in \R^{n\times n}$
\State \textbf{parameters:} ${\cal D}$ = distribution over random matrices; symmetric pos.\ def.\  $W\in \R^{n\times n}$
\State \textbf{initialize:} symmetric matrix $X_0\in \R^{n\times n}$
\For {$k = 0, 1, 2, \dots$}
	\State Sample an independent copy $S\sim {\cal D}$
	\State Compute $\Lambda\leftarrow  S(S^T A WA S)^{-1}S^T$,\quad  $\Theta \leftarrow \Lambda A W$, \quad 	 $M_k \leftarrow X_k A - I$
    \State $X_{k+1} =X_{k}- M_k \Theta  - (M_k \Theta)^T + \Theta^T (AX_{k}A-A) \Theta$
    \Comment Equiv. to \eqref{eq:NFsym} \& \eqref{eq:RFsym}    
\EndFor
\State \textbf{output:} last iterate $X_k$
\end{algorithmic}
\caption{Stochastic Iterative Matrix Inversion (SIMI) -- symmetric variant}
\label{alg:sym}
\end{algorithm}

\section{Convergence} \label{sec:conv}
We now analyze the convergence of the \emph{error}, $X_{k}-A^{-1}$, for iterates of Algorithms~\ref{alg:asym-row}, \ref{alg:asym-col}   and~\ref{alg:sym}. For the sake of economy of space, we only analyze Algorithms~\ref{alg:asym-row} and \ref{alg:sym}. Convergence of  Algorithm~\ref{alg:asym-col}  follows from convergence of Algorithm~\ref{alg:asym-row} by observing Remark~\ref{rem:alg2conv}.

 The first analysis we present in Section~\ref{sec:normEconv} is concerned with the convergence of $\norm{\E{X_{k}-A^{-1}}}^2,$ that is, the {\em norm of the expected error}.   We then analyze the convergence of $\E{\norm{X_{k}-A^{-1}}}^2,$ the {\em expected norm of the error}. The latter is a stronger type of convergence,  as explained in the following proposition.

\begin{proposition} \label{lem:conv} Let $X\in \R^{n\times n}$ be a random matrix, $\norm{\cdot}$ a matrix norm induced by an inner product, and fix $A^{-1}\in \R^{n\times n}$. Then
\[ \|\E{X-A^{-1}}\|^2   = \E{\| X-A^{-1} \|^2}  - \E{\| X - \E{X}\|^2}.\]
\end{proposition}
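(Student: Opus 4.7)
The plan is to recognize this identity as a standard bias–variance decomposition, which holds for any random vector in an inner-product space applied to the random matrix $Y\eqdef X-A^{-1}$. Substituting $Y$ for $X-A^{-1}$ and observing that $X-\E{X}=Y-\E{Y}$, the claim reduces to proving
\[ \E{\|Y\|^2} \;=\; \|\E{Y}\|^2 + \E{\|Y-\E{Y}\|^2}. \]

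I would then exploit the hypothesis that $\|\cdot\|$ is induced by an inner product $\langle\cdot,\cdot\rangle$. Expanding
\[ \|Y-\E{Y}\|^2 \;=\; \|Y\|^2 \;-\; 2\langle Y,\E{Y}\rangle \;+\; \|\E{Y}\|^2, \]
and taking expectations on both sides, linearity of expectation (applied to the inner product in its first argument, with $\E{Y}$ treated as a deterministic element) gives $\E{\langle Y,\E{Y}\rangle} = \langle \E{Y},\E{Y}\rangle = \|\E{Y}\|^2$. Hence
\[ \E{\|Y-\E{Y}\|^2} \;=\; \E{\|Y\|^2} \;-\; \|\E{Y}\|^2, \]
and rearranging yields the claim after substituting back $Y=X-A^{-1}$.

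There is no real obstacle here. The only subtlety worth a line of comment is the justification of pulling $\E{Y}$ out of the inner product in the cross term, which is immediate once one notes that $\E{Y}$ is deterministic and $\langle\cdot,\cdot\rangle$ is bilinear; implicitly one needs $\E{\|Y\|^2}<\infty$ (otherwise both sides of the identity may be infinite), which can be stated as a standing assumption or simply interpreted in $[0,\infty]$. No appeal to the specific form of the Frobenius or weighted Frobenius norm is needed, so the result applies uniformly to all norms used elsewhere in the paper.
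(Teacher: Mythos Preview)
Your argument is correct and is essentially the same as the paper's: both are the standard bias--variance decomposition via expanding the squared norm through the inner product and using linearity of expectation on the cross term. The only cosmetic difference is that you substitute $Y=X-A^{-1}$ at the outset, whereas the paper first proves $\Esmall{\|X-\Esmall{X}\|^2}=\Esmall{\|X\|^2}-\|\Esmall{X}\|^2$ and then adds and subtracts $\|A^{-1}\|^2-2\langle \E{X},A^{-1}\rangle$ to introduce $A^{-1}$; your ordering is marginally cleaner but the content is identical.
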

\begin{proof} Note that $\Esmall{\norm{ X - \Esmall{X}}^2}= \Esmall{\norm{X}^2} - \norm{\Esmall{X}}^2.$
Adding and subtracting $\norm{A^{-1}}^2-2\dotprod{\E{X},A^{-1}}$ from the right hand side, then grouping the appropriate terms, yields the desired result. 
\end{proof} 

This shows that if  $\Esmall{\|X_{k}-A^{-1}\|^2}$ converges to zero, then  $\norm{\Esmall{X_{k}-A^{-1}}}^2$ converges to zero. But the converse is not necessarily true. Rather, the variance $\Esmall{\left\| X_{k} - \E{X_{k}}\right\|^2}$ must converge to zero for the converse to be true\footnote{The convergence of  $\|\Esmall{X_{k}-A^{-1}}\|^2$ is also known in the probability literature as $L_2$--norm convergence. It also follows trivially from the Markov's inequality that convergence in $L_2$--norm implies convergence in probability.}. The convergence of  Algorithms~\ref{alg:asym-row} and \ref{alg:sym} can  be  characterized by studying the  random matrix
   \begin{equation}\label{eq:Z}
   Z \eqdef A^TS(S^T A WA^T S)^{-1}S^T A.
   \end{equation}
The update step of  Algorithm~\ref{alg:asym-row} can be re-written as a simple fixed point formula
   \begin{align}
   X_{k+1} -A^{-1}&=   \left(I-WZ\right)(X_{k}-A^{-1}). \label{eq:XZupdate}
   \end{align}
We can also simplify the iterates of Algorithm~\ref{alg:sym} to 
   \begin{align}
X_{k+1}-A^{-1} &=  \left(I-WZ\right) (X_{k}-A^{-1})\left(I -ZW \right).
 \label{eq:XZupdatesym}
\end{align}


    The only stochastic component in both methods is contained in the matrix $Z$, and ultimately, the convergence of the iterates will depend on $\E{Z}$, the expected value of this matrix. Thus we start with two lemmas concerning the $Z$ and $\E{Z}$ matrices.

\begin{lemma}\label{lem:ZW} If $Z$ is defined as in~\eqref{eq:Z}, then
\begin{enumerate}
\item the eigenvalues of $W^{1/2}ZW^{1/2}$ are either $0$ or $1$,
\item  $W^{1/2}ZW^{1/2}$ projects onto the $q$--dimensional subspace $\myRange{W^{1/2}A^TS}$. 
\end{enumerate}
\end{lemma}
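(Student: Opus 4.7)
\medskip

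\noindent\textbf{Proof plan.} The natural approach is to recognize $W^{1/2}ZW^{1/2}$ as a classical orthogonal projector, from which both claims are immediate. Concretely, I would introduce the shorthand
\[ U \;\eqdef\; W^{1/2} A^T S \;\in\; \R^{n\times q}, \]
and observe that $U^T U = S^T A W^{1/2} W^{1/2} A^T S = S^T A W A^T S$, so that by the definition~\eqref{eq:Z} of $Z$,
\[ W^{1/2} Z W^{1/2} \;=\; W^{1/2} A^T S \,(S^T A W A^T S)^{-1}\, S^T A W^{1/2} \;=\; U(U^T U)^{-1} U^T. \]
This is the textbook formula for the orthogonal projector onto $\mathbf{Range}(U)$, provided $U^T U$ is invertible.

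Next I would justify that $U^T U$ is indeed invertible. Since $A$ is invertible and $W$ is positive definite, $W^{1/2} A^T$ is invertible, so $U = W^{1/2} A^T S$ has the same rank as $S$; by the standing hypothesis that $S$ has full column rank $q$, we conclude that $U$ has rank $q$ and hence $U^T U \in \R^{q\times q}$ is positive definite (in particular invertible). This both legitimizes the inverse appearing in the definition of $Z$ and tells us that $\dim \mathbf{Range}(U) = q$.

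From here both statements of the lemma follow immediately. For (2), $M \eqdef U(U^T U)^{-1} U^T$ satisfies $M^T = M$ and $M^2 = U(U^T U)^{-1}(U^T U)(U^T U)^{-1} U^T = M$, so $M$ is an orthogonal projector; moreover $\mathbf{Range}(M) = \mathbf{Range}(U) = \mathbf{Range}(W^{1/2} A^T S)$, which has dimension $q$ by the previous paragraph. For (1), being an orthogonal projector, the spectrum of $M$ consists of $0$'s and $1$'s (with $1$ appearing with multiplicity $\mathrm{rank}(M) = q$ and $0$ with multiplicity $n-q$).

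There is no real obstacle here — the only thing to be careful about is the invertibility of $S^T A W A^T S$, which is precisely where the full column rank assumption on $S$ enters. Everything else is a one-line verification of the defining properties of an orthogonal projector.
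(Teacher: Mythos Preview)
Your proposal is correct and follows essentially the same approach as the paper: both arguments identify $W^{1/2}ZW^{1/2}$ as an idempotent symmetric matrix with range $\mathbf{Range}(W^{1/2}A^TS)$ of dimension $q$. The only cosmetic difference is that you introduce the shorthand $U=W^{1/2}A^TS$ and invoke the textbook projector formula $U(U^TU)^{-1}U^T$ directly, while the paper verifies idempotency and the range characterization by hand.
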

\begin{proof} Using~\eqref{eq:Z}, simply verifying that $(W^{1/2}ZW^{1/2})^2 = W^{1/2}ZW^{1/2}$ proves that it is a projection matrix, and thus has eigenvalues $0$ or $1$. Furthermore, the matrix $W^{1/2}ZW^{1/2}$ projects onto $\myRange{W^{1/2}A^TS}$, which follows from 
$W^{1/2}ZW^{1/2} (W^{1/2}A^TS) = W^{1/2}A^TS$ and the fact that $W^{1/2}ZW^{1/2} y =0$ for all  $y \in \Null{W^{1/2}A^TS}$.
Finally, \[\dim\left(\myRange{W^{1/2}A^TS}\right) =\Rank{W^{1/2}A^TS} = \Rank{S} =q.\]
\end{proof}

\begin{lemma}\label{lem:EZ}
The spectrum of $W^{1/2}\E{Z}W^{1/2}$ is contained in $[0,1].$
\end{lemma}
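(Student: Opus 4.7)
The plan is to leverage Lemma 5.1 together with the fact that taking expectations preserves the Loewner (positive semidefinite) order.

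First, I would observe that $W^{1/2}ZW^{1/2}$ is symmetric. Indeed, $Z = A^TS(S^TAWA^TS)^{-1}S^TA$ is symmetric because $S^TAWA^TS$ is symmetric (and hence so is its inverse), and $W$ is symmetric by assumption, so $W^{1/2}ZW^{1/2}$ is symmetric. By Lemma 5.1, $W^{1/2}ZW^{1/2}$ is an orthogonal projector, which means (for a symmetric matrix) all eigenvalues lie in $\{0,1\}$. In particular, in Loewner order,
\[
0 \preceq W^{1/2}ZW^{1/2} \preceq I
\]
holds pointwise for every realization of $S\sim \mathcal{D}$.

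Next, I would use linearity of expectation and the fact that Loewner inequalities are preserved under expectation: for any fixed $v \in \mathbb{R}^n$, the scalar random variable $v^T W^{1/2}ZW^{1/2} v$ lies in the interval $[0,\, v^T v]$ almost surely, and therefore so does its expectation $v^T W^{1/2}\E{Z}W^{1/2} v$. Since $v$ was arbitrary, this yields
\[
0 \preceq W^{1/2}\E{Z}W^{1/2} \preceq I.
\]
Because $W^{1/2}\E{Z}W^{1/2}$ is symmetric (as an expectation of symmetric matrices), its eigenvalues coincide with its Rayleigh quotients on unit vectors, hence they all lie in $[0,1]$.

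There is essentially no obstacle here; the argument is just the observation that an orthogonal projector satisfies $0 \preceq P \preceq I$, and this sandwich survives taking expectations. The one small point to make explicit is that $W^{1/2}$ can be pulled outside the expectation (since $W$ is deterministic), so that $\E{W^{1/2}ZW^{1/2}} = W^{1/2}\E{Z}W^{1/2}$, which is what allows the conclusion to be stated in the form claimed.
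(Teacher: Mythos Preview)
Your proof is correct. Both you and the paper start from Lemma~\ref{lem:ZW} (the fact that $\hat{Z}\eqdef W^{1/2}ZW^{1/2}$ is an orthogonal projector), but the way the pointwise eigenvalue bound is pushed through the expectation differs. The paper invokes Jensen's inequality together with the convexity of the maps $M\mapsto\lambda_{\max}(M)$ and $M\mapsto -\lambda_{\min}(M)$ on symmetric matrices to obtain $\lambda_{\max}(\Esmall{\hat{Z}})\le \Esmall{\lambda_{\max}(\hat{Z})}\le 1$ and similarly for the minimum. You instead use the more elementary fact that the Loewner order $0\preceq \hat{Z}\preceq I$ is preserved under expectation, which follows immediately from linearity of expectation applied to the scalar quadratic form $v^T\hat{Z}v$. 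Your route is slightly more direct, as it sidesteps the need to cite convexity of extreme eigenvalues; the paper's route, on the other hand, generalizes more readily to other spectral functionals.
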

\begin{proof}
Let $\hat{Z} = W^{1/2}ZW^{1/2},$ thus $W^{1/2}\E{Z}W^{1/2} =\Esmall{\hat{Z}}.$
Since the mapping $A \mapsto \lambda_{\max}(A)$ is convex,  by Jensen's inequality we get
$\lambda_{\max} (\Esmall{\hat{Z}}) \leq\Esmall{\lambda_{\max}(\hat{Z})}$.
Applying Lemma~\ref{lem:ZW}, we conclude that $\lambda_{\max} ( \Esmall{\hat{Z}} )  \leq 1$.
The inequality $\lambda_{\min} (\Esmall{\hat{Z}})  \geq 0$ can be shown analogously using convexity of the mapping $A\mapsto -\lambda_{\min}(A)$.
\end{proof}

\subsection{Norm of the expected error} \label{sec:normEconv}

We start by proving that the norm of the expected error of the iterates of Algorithm~\ref{alg:asym-row} and Algorithm~\ref{alg:sym} converges to zero. The following theorem is remarkable in that we do not need to make any assumptions on the distribution $S$, except that $S$ has full column rank. Rather, the theorem pinpoints that convergence  depends solely on the spectrum of $I-W^{-1/2}\E{Z}W^{-1/2}.$  

\begin{theorem} \label{theo:normEconv}
Let $S$ be a random matrix which has full column rank with probability~$1$ (so that $Z$ is well defined). Then the iterates $X_{k+1}$ of Algorithm~\ref{alg:asym-row} satisfy
\begin{equation} \label{eq:XXXX}
\E{ X_{k+1} -A^{-1} } =(I-W\E{Z}) \E{X_{k} - A^{-1}}.
 \end{equation}
Let $X_0 \in \R^{n\times n}$. If $X_k$ is calculated in either one of these two ways
 \begin{enumerate}
 \item Applying $k$ iterations of  Algorithm~\ref{alg:asym-row}, 
 \item   Applying $k$ iterations of Algorithm~\ref{alg:sym} (assuming $A$ and $X_0$ are  symmetric),
\end{enumerate}
then $X_k$ converges to the inverse exponentially fast, according to
\begin{equation}\label{eq:normexpconv}
\norm{\E{X_{k}-A^{-1}}}_{W^{-1}} \leq \rho^k \norm{X_{0}-A^{-1}}_{W^{-1}}, 
\end{equation}
where 
\begin{equation} \label{eq:rhoequiv}\rho \eqdef 1-\lambda_{\min}(W^{1/2}\E{Z}W^{1/2}).\end{equation}
Moreover, we have the following lower and upper bounds on the convergence rate:
\begin{equation}\label{eq:rholower}
0 \leq 1- \E{q}/ n  \leq \rho \leq 1.
\end{equation}
\end{theorem}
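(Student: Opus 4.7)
The approach is to take expectations in the fixed-point identities~\eqref{eq:XZupdate} and~\eqref{eq:XZupdatesym}, exploit independence to obtain clean linear recursions for $\E{X_k-A^{-1}}$, and then use the identity $\|Y\|_{W^{-1}}=\|W^{-1/2}YW^{-1/2}\|_2$ to reduce matters to bounding a single operator norm, namely that of $I - W^{1/2}\E{Z}W^{1/2}$, whose spectrum we already understand via Lemma~\ref{lem:EZ}.

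For Algorithm~\ref{alg:asym-row}, the random matrix $Z$ used at step $k$ is built from a fresh i.i.d.\ draw of $S$ and is therefore independent of $X_k$. Since $\E{AB}=\E{A}\E{B}$ for independent matrix-valued random variables (this is just bilinearity of the matrix product applied entrywise), taking expectations in~\eqref{eq:XZupdate} yields~\eqref{eq:XXXX} directly, and iterating gives $\E{X_k-A^{-1}} = (I - W\E{Z})^k (X_0 - A^{-1})$. Using the conjugation identity $W^{-1/2}(I - W\E{Z}) = (I - W^{1/2}\E{Z}W^{1/2})W^{-1/2}$ one can pull the similarity transformation all the way through the $k$-fold product, and submultiplicativity of the Euclidean operator norm extracts a factor $\|I - W^{1/2}\E{Z}W^{1/2}\|_2^k$. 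Since $W^{1/2}\E{Z}W^{1/2}$ is symmetric with spectrum in $[0,1]$ (Lemma~\ref{lem:EZ}), this operator norm equals $1-\lambda_{\min}(W^{1/2}\E{Z}W^{1/2}) = \rho$, proving~\eqref{eq:normexpconv}.

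For Algorithm~\ref{alg:sym}, the presence of the same random $Z$ on \emph{both} sides of~\eqref{eq:XZupdatesym} means that independence no longer factorizes the product. Setting $\hat{E}_k := W^{-1/2}(X_k-A^{-1})W^{-1/2}$ and $\hat{Z} := W^{1/2}ZW^{1/2}$, the recursion rewrites as $\hat{E}_{k+1} = (I-\hat{Z})\hat{E}_k(I-\hat{Z})$; since $\hat{Z}$ is independent of $\hat{E}_k$ and the map $f(M) := \E{(I-\hat{Z})M(I-\hat{Z})}$ is linear, we get $\E{\hat{E}_{k+1}} = f(\E{\hat{E}_k})$. A short calculation using symmetry of $W$ shows that $Z$ (and hence $\hat{Z}$) is symmetric, so by Lemma~\ref{lem:ZW} the matrix $\hat{Z}$ is an \emph{orthogonal} projection satisfying $(I-\hat{Z})^2 = I-\hat{Z}$. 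When $A$ and $X_0$ are symmetric, every $\hat{E}_k$ is symmetric and so is $f(\E{\hat{E}_k})$; its spectral norm can then be controlled via a quadratic-form argument. Setting $u = (I-\hat{Z})v$ and using $|u^TMu|\leq \|M\|_2\|u\|^2$ for symmetric $M$, Jensen's inequality yields
\[
|v^T f(M) v| \;\leq\; \|M\|_2\,\E{\|u\|^2} \;=\; \|M\|_2\,v^T(I-\E{\hat{Z}})v \;\leq\; \rho\,\|M\|_2\|v\|^2,
\]
where in the middle equality we used $(I-\hat{Z})^T(I-\hat{Z}) = I-\hat{Z}$. Thus $\|f(M)\|_2 \leq \rho\|M\|_2$ on symmetric matrices, and an induction on $k$ delivers~\eqref{eq:normexpconv}.

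Finally, for the bounds~\eqref{eq:rholower}, the upper bound $\rho\leq 1$ is immediate from $\lambda_{\min}\geq 0$ (Lemma~\ref{lem:EZ}). For the lower bound, Lemma~\ref{lem:ZW} says $W^{1/2}ZW^{1/2}$ is a rank-$q$ projection, so $\Tr{W^{1/2}ZW^{1/2}}=q$; linearity of expectation and of trace gives $\Tr{W^{1/2}\E{Z}W^{1/2}} = \E{q}$, and the smallest eigenvalue of an $n\times n$ symmetric matrix is at most its average eigenvalue, yielding $\rho = 1-\lambda_{\min} \geq 1 - \E{q}/n \geq 0$. The main obstacle in the whole argument is the symmetric case: the two-sided recursion~\eqref{eq:XZupdatesym} defeats the naive submultiplicativity bound (which only gives contraction factor $1$), and one has to exploit the fact that $\hat{Z}$ is an orthogonal projection together with symmetry of $\E{\hat{E}_k}$ to recover the true factor $\rho$ through the quadratic-form estimate above.
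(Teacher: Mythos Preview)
Your proof is correct. For Algorithm~\ref{alg:asym-row} and for the bounds~\eqref{eq:rholower}, your argument is essentially identical to the paper's: the same change of variables $\hat{Z}=W^{1/2}ZW^{1/2}$, the same tower/independence step to get a linear recursion, and the same trace argument for the lower bound.

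The genuine difference is in how you handle Algorithm~\ref{alg:sym}. The paper observes that the map $\bar{P}(M)=\E{(I-\hat{Z})M(I-\hat{Z})}$ is a \emph{positive linear map} and then invokes a structural result (Corollary~2.3.8 in Bhatia's \emph{Positive Definite Matrices}) stating that every positive linear map attains its operator norm at the identity; this immediately gives $|||\bar{P}|||_2=\|\bar{P}(I)\|_2=\|\E{I-\hat{Z}}\|_2=\rho$. You instead give a direct, self-contained quadratic-form estimate: for symmetric $M$, bound $|v^T\bar{P}(M)v|$ pointwise using $|u^TMu|\le\|M\|_2\|u\|_2^2$ and the projection identity $(I-\hat{Z})^2=I-\hat{Z}$, obtaining $\|\bar{P}(M)\|_2\le\rho\|M\|_2$. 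Your route is more elementary (no external citation needed) but relies on the symmetry of $\E{\hat{E}_k}$, which is exactly what the hypotheses on $A$ and $X_0$ guarantee; the paper's route gives the stronger statement that $\bar{P}$ is a $\rho$-contraction on \emph{all} matrices, not just symmetric ones, at the cost of invoking a nontrivial result about positive maps. Both arguments correctly identify that the naive submultiplicativity bound on the two-sided product fails and that the projection property of $\hat{Z}$ is the key to recovering the factor $\rho$.
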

\begin{proof}
Let
\begin{equation}\label{eq:oihsoi8dhyY8J}R_k \eqdef W^{-1/2}(X_k - A^{-1})W^{-1/2} \quad \text{and} \quad \hat{Z} \eqdef W^{1/2}ZW^{1/2},\end{equation}
for all $k$. Left and right multiplying~\eqref{eq:XZupdate} by $W^{-1/2}$ gives
\begin{equation}\label{eq:barRknext}
R_{k+1} = (I -\hat{Z})R_k.
\end{equation}
Taking expectation with respect to $S$ in~\eqref{eq:barRknext} gives
\begin{equation}\label{eq:EXinXk} \Esmall{R_{k+1} \;| \; R_k} = (I - \Esmall{\hat{Z}} ) R_k. 
\end{equation}
Taking full expectation in~\eqref{eq:barRknext} and using the tower rule gives
\begin{eqnarray}
 \E{ R _{k+1}} &=& \E{\E{ R _{k+1} \;|\; R_{k}}} 
 \overset{\eqref{eq:EXinXk}}{=} \Esmall{(I -\Esmall{\hat{Z}})R_{k}}   =
 (I -\Esmall{\hat{Z}})\E{R_{k}}. \label{eq:Efullasym}
\end{eqnarray}
Applying the norm in~\eqref{eq:Efullasym} gives
\begin{align} \label{eq:normErecur}
\|\E{X_{k+1}-A^{-1}}\|_{W^{-1}} = \|\Esmall{ R_{k+1}}\|_{2} &\leq \|I -\Esmall{\hat{Z}}\|_2  \|\Esmall{ R_{k}}\|_{2} \nonumber \\
&= \| I -\Esmall{\hat{Z}}\|_2  \|\Esmall{X_k-A^{-1}}\|_{W^{-1}}. 
\end{align}
Furthermore,
\begin{align}
\|I -\Esmall{\hat{Z}}\|_2 = \lambda_{\max}\left(I -\Esmall{\hat{Z}}\right)
 =1-\lambda_{\min}\left(\Esmall{\hat{Z}}\right) \overset{\eqref{eq:rhoequiv}}{=} \rho, \label{eq:rhonorm}
\end{align}
where we used to symmetry of $(I-\Esmall{\hat{Z}})$ when passing from the operator norm to the spectral radius. Note that the symmetry of $\Esmall{\hat{Z}}$ derives from the symmetry of $\hat{Z}$.
It now remains to unroll the recurrence in~\eqref{eq:normErecur} to get~\eqref{eq:normexpconv}. Now we analyze the iterates of  Algorithm~\ref{alg:sym}. Left and right multiplying~\eqref{eq:XZupdatesym} by $W^{-1/2}$  we have 
\begin{equation}\label{eq:barRevol}
R_{k+1}= P(R_k)\eqdef (I-\hat{Z}) R_k (I -\hat{Z}). 
\end{equation}
Defining 
$\bar{P}: R \mapsto \E{P(R) \, | \, R_k}$,  
taking expectation in \eqref{eq:barRevol} conditioned on $R_{k}$, gives
$\E{R_{k+1}\; | \; R_k} =  \bar{P}(R_{k})$. As $\bar{P}$ is a linear operator, taking expectation again yields
\begin{equation} \label{eq:barPZERk}
 \E{R_{k+1}} = \E{\bar{P}(R_{k} )} =  \bar{P}(\E{R_{k} }). 
 \end{equation}
Letting $||| \bar{P}|||_2 \eqdef \max_{\norm{R}_2=1} \norm{\bar{P}(R)}_2$, applying norm in~\eqref{eq:barPZERk} gives
\begin{eqnarray} 
\norm{\E{X_{k+1}-A^{-1}}}_{W^{-1}} &=& \norm{\E{R_{k+1}}}_2 
\leq  ||| \bar{P}|||_2 \norm{\E{R_{k}}}_2 \nonumber\\
& = &  ||| \bar{P}|||_2 \norm{\E{X_{k}-A^{-1}}}_{W^{-1}}.\label{eq:normsplitbarP}
\end{eqnarray}

Clearly, $P$ is a \emph{positive linear map}, that is, it is linear and maps positive semi-definite matrices to positive semi-definite matrices. Thus, by Jensen's inequality, the map $\bar{P}$ is also a positive linear map.  As every positive linear map attains its norm at the identity matrix (see Corollary 2.3.8 in~\cite{bhatia07}), we have  
\[
||| \bar{P}|||_2 =  \norm{\bar{P}(I)}_2 
\overset{\eqref{eq:barRevol}}{=} \|\Esmall{ (I-\hat{Z}) I(I -\hat{Z} )}\|_2
\overset{(\text{Lemma}~\ref{lem:ZW})}{=} \|\Esmall{ I-\hat{Z}}\|_2
 \overset{\eqref{eq:rhonorm}}{=} \rho.
\]
Inserting the above equivalence in~\eqref{eq:normsplitbarP} and unrolling the recurrence gives~\eqref{eq:normexpconv}.

Finally, to prove~\eqref{eq:rholower}, as proven in Lemma~\ref{lem:EZ}, the spectrum of $W^{1/2}\E{Z}W^{1/2}$ is contained in $[0,\,1]$ consequently $0\leq \rho \leq 1.$ Furthermore,
 as the trace of a matrix is equal to the sum of its eigenvalues, we have
\begin{eqnarray}
\Esmall{q} &\overset{(\text{Lemma}~\ref{lem:ZW})}{=}& \Esmall{\Tr{W^{1/2}ZW^{1/2}}}\notag 
= \Tr{\Esmall{W^{1/2}ZW^{1/2}}} \notag \\
&\geq & n \lambda_{\min}(\Esmall{W^{1/2}ZW^{1/2}}),\label{eq:traceb1} \end{eqnarray}
where we used that $W^{1/2}ZW^{1/2}$ projects onto a $q$--dimensional subspace (Lemma~\ref{lem:ZW}), and thus $\Tr{W^{1/2}ZW^{1/2}} = q.$ Rearranging~\eqref{eq:traceb1} gives~\eqref{eq:rholower}.
\end{proof}

If $\rho =1$, this theorem does not guarantee convergence.
However, $\rho<1$ when $\E{Z}$ is positive definite, which is the case in all practical variants of our method, some of which we describe in Section~\ref{sec:discretemethods}.

\subsection{Expectation of the norm of the error}

Now we consider the convergence of the expected norm of the error. This form of convergence is preferred, as it also proves that the variance of the iterates converges to zero (see Proposition~\ref{lem:conv}). 

\begin{theorem} \label{theo:Enormconv}
Let $S$ be a random matrix that has full column rank with probability~$1$ and such that
$\E{Z}$ is positive definite, where $Z$ is defined in~\eqref{eq:Z}.
Let $X_0 \in \R^{n\times n}$. If $X_k$ is calculated in either one of these two ways
 \begin{enumerate}
 \item Applying $k$ iterations of  Algorithm~\ref{alg:asym-row}, 
 \item   Applying $k$ iterations of Algorithm~\ref{alg:sym} (assuming  $A$ and $X_0$ are symmetric),
\end{enumerate}
then $X_k$ converges to the inverse according to
\begin{equation} \label{eq:Enormconv}
\E{\norm{X_{k} -A^{-1} }_{F(W^{-1})}^2} \leq \rho^k \norm{X_{0} - A^{-1}}_{F(W^{-1})}^2.
 \end{equation}
\end{theorem}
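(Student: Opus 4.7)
The plan is to work in the rescaled coordinates already used in the proof of Theorem~\ref{theo:normEconv}: set
\begin{equation*}
R_k \eqdef W^{-1/2}(X_k - A^{-1})W^{-1/2}, \qquad \hat Z \eqdef W^{1/2} Z W^{1/2},
\end{equation*}
so that $\|X_k - A^{-1}\|_{F(W^{-1})}^2 = \|R_k\|_F^2$ by \eqref{eq:98y988ff}, and the fixed-point recursions \eqref{eq:XZupdate} and \eqref{eq:XZupdatesym} become $R_{k+1} = (I-\hat Z)R_k$ for Algorithm~\ref{alg:asym-row} and $R_{k+1} = (I-\hat Z) R_k (I-\hat Z)$ for Algorithm~\ref{alg:sym}. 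Lemma~\ref{lem:ZW} tells us that $\hat Z$ is a symmetric orthogonal projection, so $P\eqdef I-\hat Z$ satisfies $P=P^T$, $P^2=P$ and $\|P\|_2\leq 1$; Lemma~\ref{lem:EZ} gives $0\preceq \E{\hat Z}\preceq I$, whence $\lambda_{\max}(\E{P})=1-\lambda_{\min}(\E{\hat Z})=\rho$. The overall strategy is to derive a one-step contraction $\E{\|R_{k+1}\|_F^2 \mid R_k}\leq \rho\,\|R_k\|_F^2$ in both cases and then iterate via the tower rule.

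For the nonsymmetric row variant, using $P^2=P$ and cyclicity of the trace,
\begin{equation*}
\|R_{k+1}\|_F^2 = \Tr{R_k^T P^2 R_k} = \Tr{R_k R_k^T P}.
\end{equation*}
Taking conditional expectation yields $\E{\|R_{k+1}\|_F^2 \mid R_k} = \Tr{R_k R_k^T\, \E{P}}$. Since $R_k R_k^T \succeq 0$ and $\E P \succeq 0$, the elementary inequality $\Tr{QM}\leq \lambda_{\max}(M)\Tr{Q}$ (for symmetric PSD $Q$ and symmetric $M$ with nonnegative spectrum), which follows by diagonalizing $Q$ and reading off Rayleigh quotients of $M$, gives the desired one-step contraction by $\rho$.

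For the symmetric variant, symmetry of $A$ and $X_0$ propagates through \eqref{eq:XZupdatesym} (since $(I-WZ)^T = I - ZW$ when $W$ and $Z$ are symmetric), so $R_k$ is symmetric for every $k$. Then, using $P^2=P$ and cyclicity,
\begin{equation*}
\|R_{k+1}\|_F^2 = \Tr{(PR_kP)^2} = \Tr{P R_k P R_k} = \|R_k\|_F^2 - 2\Tr{\hat Z R_k^2} + \Tr{\hat Z R_k \hat Z R_k}.
\end{equation*}
The main obstacle is bounding the last term, which is quadratic in $\hat Z$ and so does not linearize upon taking expectation. The trick is to recognize, using $\hat Z^2=\hat Z$ and symmetry of $\hat Z$ and $R_k$, that $\Tr{\hat Z R_k \hat Z R_k} = \|\hat Z R_k \hat Z\|_F^2$, and then to apply submultiplicativity together with $\|\hat Z\|_2 = 1$:
\begin{equation*}
\|\hat Z R_k \hat Z\|_F^2 \leq \|\hat Z\|_2^2\,\|R_k \hat Z\|_F^2 = \Tr{\hat Z R_k^2 \hat Z} = \Tr{\hat Z R_k^2}.
\end{equation*}
This absorbs the quadratic term into the linear one and yields $\|R_{k+1}\|_F^2 \leq \|R_k\|_F^2 - \Tr{\hat Z R_k^2}$. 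Taking conditional expectation and applying the same PSD-trace inequality as above to $\Tr{\E{\hat Z}\, R_k^2}\geq \lambda_{\min}(\E{\hat Z})\,\|R_k\|_F^2 = (1-\rho)\|R_k\|_F^2$ produces $\E{\|R_{k+1}\|_F^2\mid R_k}\leq \rho\,\|R_k\|_F^2$.

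Finally, the tower rule turns the conditional contraction into $\E{\|R_{k+1}\|_F^2}\leq \rho\,\E{\|R_k\|_F^2}$ in both cases; unrolling from $k=0$ and reverting the change of variables gives exactly \eqref{eq:Enormconv}. The positive-definiteness assumption on $\E{Z}$ is used only to ensure $\rho<1$ so that the bound is genuinely contractive.
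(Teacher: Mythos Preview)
Your proof is correct and follows essentially the same route as the paper: the same rescaling to $R_k$ and $\hat Z$, the same use of the projection identity $(I-\hat Z)^2=I-\hat Z$, and the same one-step conditional contraction $\E{\|R_{k+1}\|_F^2\mid R_k}\leq \rho\|R_k\|_F^2$ followed by the tower rule. In the symmetric case your submultiplicativity bound $\Tr{\hat Z R_k \hat Z R_k}\leq \Tr{\hat Z R_k^2}$ is algebraically identical to the paper's step of dropping the nonnegative term $\Tr{\hat Z^{1/2}R_k(I-\hat Z)R_k\hat Z^{1/2}}\geq 0$, so the two arguments coincide up to presentation.
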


\begin{proof}
First consider Algorithm~\ref{alg:asym-row}, where $X_{k+1}$ is calculated by iteratively applying~\eqref{eq:XZupdate}. Using the substitution~\eqref{eq:oihsoi8dhyY8J} again, then from~\eqref{eq:XZupdate} we have 
$ R_{k+1} = (I-\hat{Z})R_k$, from 
which we obtain
\begin{eqnarray}
\norm{R_{k+1}}_{F}^2  & = & \|(I-\hat{Z})R_{k}\|_{F}^2 
  = \Tr{ (I-\hat{Z} )(I-\hat{Z})R_kR_{k}^T} \nonumber\\
&\overset{(\text{Lemma}~\ref{lem:ZW})}{=}& \Tr{(I-\hat{Z})R_kR_{k}^T} \quad = \quad \norm{R_k}_{F}^2 - \Tr{\hat{Z}R_{k}R_k^T}. \label{eq:asymmapplylem} 
\end{eqnarray}
Taking expectations,  we get
$\Esmall{\norm{R_{k+1}}_{F}^2  \, | \, R_k} =\norm{R_k}_{F}^2 - \Tr{\Esmall{\hat{Z}}R_{k}R_k^T}.$  Using the inequality $\Tr{\Esmall{\hat{Z}}R_{k}R_k^T} \geq  \lambda_{\min}(\Esmall{\hat{Z}})\Tr{R_kR_{k}^T} $, which relies on the symmetry of $\Esmall{\hat{Z}},$ we get
\[\Esmall{\norm{R_{k+1}}_{F}^2 \,| \, R_k} 
\leq  (1-\lambda_{\min}(\Esmall{\hat{Z}}) )
\norm{R_k}_{F}^2 =\rho\cdot  \norm{R_k}_{F}^2.\]
In order to arrive at \eqref{eq:Enormconv},
it now remains to take full expectation,  unroll the recurrence and use the substitution \eqref{eq:oihsoi8dhyY8J}
  
Now we assume that $A$ and $X_0$ are symmetric and $\{X_k\}$ are the iterates computed by Algorithm~\ref{alg:sym}. 
Left and right multiplying~\eqref{eq:XZupdatesym} by $W^{-1/2}$  we have 
\begin{equation}\label{eq:barRevol2}
R_{k+1}= (I-\hat{Z}) R_k (I -\hat{Z} ). 
\end{equation}
Taking norm we have
\begin{eqnarray*}
\norm{R_{k+1}}_{F}^2 & \overset{(\text{Lemma}~\ref{lem:ZW})}{=} &
\Tr{R_k (I -\hat{Z} ) R_k (I -\hat{Z} )} 
\\
&=& \Tr{R_k R_k (I -\hat{Z} )} -\Tr{R_k \hat{Z}R_k (I -\hat{Z} )}\\
& \leq&  \Tr{R_k R_k (I -\hat{Z}) }, 
\end{eqnarray*}
where in the last inequality we used that $I -\hat{Z}$ is an orthogonal projection and thus it is symmetric positive semi-definite, whence
$\Tr{R_k \hat{Z}R_k (I -\hat{Z} )} = 
\Tr{\hat{Z}^{1/2}R_k (I -\hat{Z} )R_k \hat{Z}^{1/2}} \geq 0.$
The remainder of the proof follows similar steps as those we used in the first part of the proof from~\eqref{eq:asymmapplylem} onwards.
\end{proof}

Theorem~\ref{theo:Enormconv} establishes that  the expected norm of the error converges exponentially fast to zero. Moreover, the convergence rate $\rho$ is the same that appeared in Theorem~\ref{theo:normEconv}, where we established the convergence of the norm of the expected error.  Both results can be recast as iteration complexity bounds. For instance, using standard arguments, from Theorem~\ref{theo:normEconv} we observe that for a given $0<\epsilon <1$ we get
\begin{equation} \label{eq:itercomplex}k \geq \left(\frac{1}{2}\right)\frac{1}{1-\rho} \log\left(\frac{1}{\epsilon}\right) \quad \Rightarrow \quad \norm{\E{X_k-A^{-1}}}_{W^{-1}}^2 \leq \epsilon \norm{X_0-A^{-1}}_{W^{-1}}^2.
\end{equation}
On the other hand, from Theorem~\ref{theo:Enormconv} we have
\begin{equation} \label{eq:itercomplex2}k \geq \frac{1}{1-\rho} \log\left(\frac{1}{\epsilon}\right) \quad \Rightarrow \quad \E{\norm{X_k-A^{-1}}_{F(W^{-1})}^2} \leq \epsilon \norm{X_0-A^{-1}}_{F(W^{-1})}^2.
\end{equation}
To push the expected norm of the error below $\epsilon$ (see\eqref{eq:itercomplex2}), we require double the  iterates compared to bringing the norm of expected error below $\epsilon$ (see\eqref{eq:itercomplex}).  This is because in Theorem~\ref{theo:Enormconv} we determined that $\rho$ is the rate at which the expectation of the \emph{squared} norm error converges, while in Theorem~\ref{theo:normEconv} we determined that $\rho$ is the rate at which the norm, without the square, of the expected error converges. However, as proven in Proposition~\ref{lem:conv}, the former is a stronger form of convergence. Thus, Theorem~\ref{theo:normEconv} does not give a stronger result than Theorem~\ref{theo:Enormconv}, but rather, these theorems give qualitatively different results.

\section{Discrete Random Matrices} \label{sec:discrete}

We now consider the case of a discrete random matrix $S$. We show that when $S$ is a \emph{complete discrete sampling}, then $\E{Z}$ is 
positive definite, and thus from Theorems~\ref{theo:normEconv} and~\ref{theo:Enormconv}, Algorithms~\ref{alg:asym-row}--\ref{alg:sym} converge.

\begin{definition}[Complete Discrete Sampling]\label{def:complete}
The random matrix $S$ has a finite discrete distribution with $r$ outcomes. In particular,  $S= S_i \in \R^{n \times q_i}$ with probability  $p_i>0$ for $i=1,\ldots, r$, where $S_i$ is of full column rank.   We say that $S$ is a complete discrete sampling when
 $\mathbf{S} \eqdef [S_1, \ldots, S_r] \in \R^{n\times n}$ has full row rank.
\end{definition}

As an  example of a complete discrete sampling, let $S =e_i$ (the $i$th unit coordinate vector in $\R^n$) with probability $p_i =1/n$, for $i =1,\ldots, n.$ Then $\mathbf{S}$, as defined in Definition~\ref{def:complete},  is equal to the identity matrix: $\mathbf{S} =I$. Consequently, $S$ is a complete discrete sampling. In fact, from any basis of $\R^n$ we could construct a complete discrete sampling in an analogous way.

Next we establish that when $S$ is discrete random matrix, that $S$ having a complete discrete distribution is a necessary and sufficient condition for $\E{Z}$ to be positive definite.  This will allow us to determine an optimized distribution for $S$ in Section~\ref{sec:discreteopt}.

\begin{proposition}\label{prop:Ediscrete} 
Let $S$ be a discrete random matrix with $r$ outcomes $S_r$ all of which have full column rank. The matrix $\E{Z}$ is positive definite if and only if $S$ is a complete discrete sampling. Furthermore
\begin{equation}\E{Z} =  A^T \mathbf{S} D^2 \mathbf{S}^T  A \label{eq:EZdiscrete}, \qquad \text{where} \end{equation}
\begin{equation} \label{eq:D}
D~\eqdef~\mbox{Diag}\left( \sqrt{p_1}(S_1^T A WA^T S_1)^{-1/2}, \ldots, \sqrt{p_r}(S_r^T A WA^T S_r)^{-1/2}\right).\end{equation}
\end{proposition}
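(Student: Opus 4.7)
The plan is to verify the explicit formula for $\E{Z}$ by direct computation from the discrete distribution, and then reduce positive definiteness of $\E{Z}$ to a full rank condition on $\mathbf{S}$ using invertibility of $A$ and of the block diagonal matrix $D$.

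First, I would write $\E{Z}$ from the definition \eqref{eq:Z} applied to the discrete distribution:
\[
\E{Z} \;=\; \sum_{i=1}^r p_i \, A^T S_i (S_i^T A W A^T S_i)^{-1} S_i^T A \;=\; A^T\!\left(\sum_{i=1}^r p_i S_i M_i S_i^T\right)\!A,
\]
where $M_i \eqdef (S_i^T A W A^T S_i)^{-1}$. Note that each $M_i$ is well defined and positive definite, because $S_i$ has full column rank and $W$ is positive definite, so $S_i^T A W A^T S_i$ is positive definite.

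Next, I would identify this sum with the compact product $A^T \mathbf{S} D^2 \mathbf{S}^T A$. Observe that $D^2$ is the block diagonal matrix with blocks $p_i M_i$, so by partitioned matrix multiplication
\[
\mathbf{S} D^2 \mathbf{S}^T \;=\; [S_1,\dots,S_r]\,\mbox{Diag}(p_1 M_1,\dots,p_r M_r)\,[S_1,\dots,S_r]^T \;=\; \sum_{i=1}^r p_i S_i M_i S_i^T,
\]
which gives exactly \eqref{eq:EZdiscrete}. This is the routine step.

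Finally, for the equivalence, I would factor $\E{Z} = (A^T \mathbf{S} D)(A^T \mathbf{S} D)^T$, using the symmetric square root $D$ of the positive definite block diagonal matrix $D^2$. Since $A$ is invertible and $D$ is invertible (being a block diagonal of positive definite matrices), $\E{Z}$ is positive definite if and only if $(A^T \mathbf{S} D)(A^T \mathbf{S} D)^T$ is, and this holds if and only if $A^T \mathbf{S} D$ has full row rank. By invertibility of $A^T$ and $D$, this is in turn equivalent to $\mathbf{S}$ having full row rank, i.e., to $S$ being a complete discrete sampling in the sense of Definition~\ref{def:complete}.

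I do not expect a significant obstacle: the proof is essentially an algebraic identification of a weighted sum with a block matrix product, followed by a standard rank argument enabled by the invertibility of $A$ and $D$. The only point requiring a brief justification is that each diagonal block $M_i$ (hence $D^2$) is positive definite, which follows from the full column rank assumption on each $S_i$.
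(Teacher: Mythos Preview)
Your proposal is correct and follows essentially the same approach as the paper: both compute $\E{Z}$ as a weighted sum, rewrite it as $A^T\mathbf{S}D^2\mathbf{S}^TA = (A^T\mathbf{S}D)(A^T\mathbf{S}D)^T$, and then use invertibility of $A$ and $D$ to reduce positive definiteness to $\mathbf{S}$ having full row rank. The only cosmetic difference is that the paper phrases the last step via a null-space argument ($\|D\mathbf{S}^TAv\|_2=0 \Rightarrow v=0$) whereas you use the equivalent rank formulation.
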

\begin{proof} 
 Taking the expectation of $Z$ as defined in~\eqref{eq:Z} gives
\begin{align}
\E{Z} &= \sum_{i=1}^r A^T S_i (S_i^T A WA^T S_i)^{-1}S_i^T A p_i \nonumber \\
& =   A^T\left(\sum_{i=1}^r   S_i \sqrt{p_i}(S_i^T A WA^T S_i)^{-1/2} (S_i^T A WA^T S_i)^{-1/2}  \sqrt{p_i}S_i^T \right) A \nonumber \\
&= \left(  A^T \mathbf{S} D\right)  \left( D \mathbf{S}^T  A\right), \nonumber
\end{align} and $\E{Z}$ is clearly positive semi-definite.  
Note that, since we assume that $S$ has full column rank with probability $1$, the matrix $D$ is well defined and nonsingular. Given that $\E{Z}$ is positive semi-definite, we need only show that $\Null{\E{Z}}$ contains only the zero vector if and only if $S$ is a complete discrete sampling.
 Let $v \in \Null{\E{Z}}$ and $v \neq 0,$ thus 
$0=v^\top A^\top  \mathbf{S}  D^2 \mathbf{S}^\top   Av = \|D\mathbf{S}^\top   Av\|_2^2, $
which shows that $\mathbf{S}^\top   Av =0$ and thus $v \in \Null{\mathbf{S}^\top A}.$  As $A$ is nonsingular, it follows that $v=0$ if and only if  $\mathbf{S}^\top $ has full column rank. 
%
%
%
\end{proof}

With a closed form expression for $\E{Z}$ we can optimize $\rho$ over the possible distributions of $S$ to yield a better convergence rate.

\subsection{Optimizing an upper bound on the convergence rate} \label{sec:discreteopt}

So far we have proven two different types of convergence for Algorithms~\ref{alg:asym-row}, \ref{alg:asym-col} and  \ref{alg:sym} in Theorems~\ref{theo:normEconv} and~\ref{theo:Enormconv}. Furthermore, both forms of convergence depend on the same convergence rate $\rho$ for which we have a closed form expression~\eqref{eq:rhoequiv}.

The availability of a closed form expression for the convergence rate opens up the possibility of designing particular distributions for $S$ optimizing the rate. In~\cite{Gower2015} it was shown that (in the context of solving linear systems) for a complete discrete sampling, computing the optimal probability distribution, assuming that the matrices $\{S_i\}_{i=1}^r$ are fixed, leads to  a semi-definite program (SDP). In some cases,  the gain in performance from the optimal probabilities is much larger than the loss incurred by having to solve the SDP. However, this is not always the case.  Here we propose an alternative: to optimize the following upper bound on the convergence rate:
\begin{equation}\label{eq:98h9s8h8s} \rho = 1- \lambda_{\min}(W^{1/2}\E{Z} W^{1/2}) \leq  1 - \frac{1}{\Tr{W^{-1/2} (\E{Z_p})^{-1} W^{-1/2}}} \eqdef \gamma(p).\end{equation}
By writing $Z_p$ instead of $Z$, we emphasized the dependence of $Z$  on $p = (p_1,\ldots, p_r)\in \R^r$, belonging to the probability simplex
\[ \Delta_r \eqdef \left\{p = (p_1,\dots,p_r) \in \R^r \;:\; \sum_{i=1}^r p_i =1, \; p\geq 0\right\}.\]
Our goal is to minimize $\gamma(p)$ over $\Delta_r$.
\begin{theorem}
Let $S$ be a complete discrete sampling and let $\overline{S}_i\in \R^{n\times q_i}$, for $i=1,2,\dots,r$, be such that 
$\mathbf{S}^{-T} = [\overline{S}_1, \ldots, \overline{S}_r]$.
 Then
\begin{equation}\label{eq:discoptrate}
\min_{p\in \Delta_r}\gamma(p) \quad=\quad 1 -\left(\sum_{i=1}^r \|W^{1/2}A^T S_i \overline{S}^T_i  A^{-T} W^{-1/2}\|_F\right)^{-2} .
\end{equation}
\end{theorem}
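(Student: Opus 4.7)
The plan is to reduce the minimization of $\gamma(p)$ to a simple weighted-harmonic-mean problem that can be solved by Cauchy--Schwarz. Since $1$ minus the reciprocal is monotone, minimizing $\gamma(p)$ over $\Delta_r$ is equivalent to minimizing $T(p) \eqdef \Tr{W^{-1/2}(\E{Z_p})^{-1}W^{-1/2}}$. So the first step is to obtain a clean formula for $(\E{Z_p})^{-1}$. Using Proposition~\ref{prop:Ediscrete}, we have $\E{Z_p} = A^T \mathbf{S} D^2 \mathbf{S}^T A$, and because $S$ is a complete discrete sampling, $\mathbf{S}$ is invertible, so
\[
(\E{Z_p})^{-1} = A^{-1} \mathbf{S}^{-T} D^{-2} \mathbf{S}^{-1} A^{-T} = \sum_{i=1}^r \frac{1}{p_i}\, A^{-1}\overline{S}_i\,(S_i^T A W A^T S_i)\,\overline{S}_i^T A^{-T},
\]
by block-expanding $\mathbf{S}^{-T}=[\overline{S}_1,\dots,\overline{S}_r]$ and using the block-diagonal form of $D$.

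Substituting into $T(p)$ and using cyclicity of the trace, I would show
\[
T(p) = \sum_{i=1}^r \frac{c_i^2}{p_i}, \qquad c_i^2 \eqdef \Tr{W^{-1/2} A^{-1}\overline{S}_i (S_i^T A W A^T S_i)\overline{S}_i^T A^{-T} W^{-1/2}}.
\]
The key identification---and the main calculational step---is to recognize that this $c_i^2$ equals $\|W^{1/2}A^T S_i \overline{S}_i^T A^{-T} W^{-1/2}\|_F^2$. This follows by writing $\|X\|_F^2 = \Tr{X X^T}$ with $X = W^{1/2}A^T S_i \overline{S}_i^T A^{-T} W^{-1/2}$ and cycling factors under the trace. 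No step is hard, but one must carefully track which $W^{\pm 1/2}$ and $A^{\pm 1}$ go where to see that the two expressions agree.

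With this reduction, the optimization becomes the classical problem $\min \sum_i c_i^2/p_i$ subject to $\sum_i p_i=1$, $p_i\ge 0$. By Cauchy--Schwarz,
\[
\Big(\sum_{i=1}^r c_i\Big)^2 = \Big(\sum_{i=1}^r \tfrac{c_i}{\sqrt{p_i}} \cdot \sqrt{p_i}\Big)^2 \le \Big(\sum_{i=1}^r \tfrac{c_i^2}{p_i}\Big)\Big(\sum_{i=1}^r p_i\Big) = \sum_{i=1}^r \tfrac{c_i^2}{p_i},
\]
with equality iff $p_i \propto c_i$. Hence the optimal choice is $p_i^\ast = c_i/\sum_j c_j$, giving $\min_p T(p) = (\sum_i c_i)^2$. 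Plugging back into $\gamma(p)$ yields the claimed formula~\eqref{eq:discoptrate}. The only thing to remark on is well-definedness: since each $S_i$ has full column rank and $A,W$ are nonsingular, each $c_i>0$, so $p_i^\ast \in \Delta_r$ has all entries strictly positive and the formulas above are all valid.
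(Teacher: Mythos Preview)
Your proof is correct and follows essentially the same route as the paper: invert $\E{Z_p}=A^T\mathbf{S}D^2\mathbf{S}^TA$ via Proposition~\ref{prop:Ediscrete}, expand $\mathbf{S}^{-T}D^{-2}\mathbf{S}^{-1}$ blockwise to reduce $T(p)$ to $\sum_i c_i^2/p_i$, identify $c_i$ with the stated Frobenius norm, and optimize over the simplex. The only cosmetic difference is that you handle the final optimization by Cauchy--Schwarz, whereas the paper packages it as a separate Lagrangian lemma (Lemma~\ref{lem:fracsum}); both yield the same optimizer $p_i^\ast\propto c_i$ and the same minimum $(\sum_i c_i)^2$.
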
 
 \begin{proof} In view of \eqref{eq:98h9s8h8s}, minimizing $\gamma$ in $p$ is equivalent to minimizing the expression \[\Tr{W^{-1/2} (\E{Z_p})^{-1} W^{-1/2}}\] in $p$.
Further, we have
\begin{eqnarray}
 \Tr{W^{-1/2} (\E{Z_p})^{-1}W^{-1/2}} &\overset{\eqref{eq:EZdiscrete}}{=} & \Tr{W^{-1/2} ( A^T \mathbf{S} D^2 \mathbf{S}^T  A)^{-1}W^{-1/2}} \nonumber \\
 &=& \Tr{W^{-1/2} A^{-1} \mathbf{S}^{-T} D^{-2} \mathbf{S}^{-1}  A^{-T} W^{-1/2}} \nonumber \\
&  \overset{\eqref{eq:D}}{=}&  \sum_{i=1}^r \frac{1}{p_i}\Tr{W^{-1/2} A^{-1} \overline{S}_i (S_i^T A WA^T S_i) \overline{S}^T_i  A^{-T} W^{-1/2}} \nonumber\\
&=& \sum_{i=1}^r \frac{1}{p_i}\norm{W^{1/2}A^{-1} \overline{S}_i S_i^T  A W^{-1/2}}_F^2.
\label{eq:pifracsum}
\end{eqnarray}

Applying Lemma~\ref{lem:fracsum} in the Appendix, the minimum of the above  subject to the constraint $p\in \Delta_r$ is  given by
\begin{equation} \label{eq:discoptprob}
p_i = \frac{\norm{W^{1/2}A^{-1}\overline{S}_i S_i^T  A W^{-1/2}}_F}{\sum_{j=1}^r \norm{W^{1/2}A^{-1}\overline{S}_j S_j^T  A W^{-1/2}}_F}, \quad i=1,2,\dots,r
\end{equation}
Plugging this into~\eqref{eq:pifracsum} gives the result~\eqref{eq:discoptrate}.
 \end{proof}
 
Observe that in general, the optimal probabilities~\eqref{eq:discoptprob} cannot be calculated, since the formula involves the inverse of $A$, which is not known. However, if $A$ is symmetric positive definite, we can choose $W=A^2$, which eliminates this issue. If $A$ is not symmetric positive definite, or if we do not wish to choose $W=A^2$, we can approach the formula \eqref{eq:discoptprob}  as a recipe for a heuristic choice of the probabilities: we can use the iterates $\{X_{k}\}$ as a proxy for $A^{-1}$. With this setup, the resulting method is not guaranteed to converge by the theory developed in this paper. However, in practice one would expect it to work well. We have not done extensive experiments to test this, and leave this to future research. To illustrate, let us consider a concrete simple example. Choose $W =I$ and $S_i =e_i$ (the unit coordinate vector in $\R^n$). We have $\mathbf{S} = [e_1,\dots,e_n] = I$, whence $\overline{S}_i =e_i$ for $i=1,\dots,r$. Plugging into \eqref{eq:discoptprob},   we obtain 
\[p_i = \frac{\norm{X_k e_i e_i^T  A}_F}{\sum_{j=1}^r \norm{X_k e_j e_j ^T A}_F} = \frac{\norm{X_k e_i}_2\norm {e_i^T  A}_2}{\sum_{j=1}^r \norm{X_k e_j}_2\norm{ e_j ^T A}_2}.\]


\subsection{Convenient sampling} \label{sec:discreteconv}

We now ask the following question:  given matrices $S_1, \dots, S_r$ defining a complete  discrete sampling, assign probabilities $p_i$ to  $S_i$ so that the convergence rate $\rho$ becomes {\em easy to interpret}. The following result, first stated in \cite{Gower2015} in the context of solving linear systems, gives a {\em convenient} choice of probabilities resulting in  $\rho$ which depends on a (scaled) condition number of  $A$. 

\begin{proposition} \label{theo:convenconv}
Let $S$ be a complete discrete sampling, where $S =S_i$ with probability  
\begin{equation} \label{eq:convprob}
p_i =\frac{\|W^{1/2}A^T S_i\|_F^2}{\|W^{1/2}A^T\mathbf{S}\|_F^2}.
\end{equation}
Then the convergence rate takes the form
\begin{equation}
\rho = 1- \frac{1}{\kappa_{2,F}^2(W^{1/2}A^T\mathbf{S})}
, \qquad \text{where} \label{eq:rhoconv}  
\end{equation}
\begin{align}\label{eq:kappalower}
\kappa_{2,F}(W^{1/2}A^T\mathbf{S}) &\eqdef \|(W^{1/2}A^T\mathbf{S})^{-1}\|_2 \|W^{1/2}A^T\mathbf{S}\|_F  =
\sqrt{ \frac{\Tr{\mathbf{S}^TAWA^T\mathbf{S}}}{\lambda_{\min}\left(\mathbf{S}^T A WA^T\mathbf{S} \right)} } \geq \sqrt{n}.
\end{align}
\end{proposition}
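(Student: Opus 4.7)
The plan is to substitute the convenient probabilities into the closed-form expression for $\E{Z}$ provided by Proposition~\ref{prop:Ediscrete}, observe a fortunate cancellation that reduces the resulting matrix to a scalar multiple of $MM^T$ (where $M \eqdef W^{1/2}A^T\mathbf{S}$), and then read off $\lambda_{\min}$.

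First I would apply Proposition~\ref{prop:Ediscrete} to write $\E{Z} = A^T \mathbf{S} D^2 \mathbf{S}^T A$, where $D^2$ is block diagonal with $i$-th block $p_i (S_i^T A W A^T S_i)^{-1}$. In the canonical setting where each $q_i=1$ (for instance, $S_i=e_i$), the quantity $S_i^T A W A^T S_i$ is the scalar $\|W^{1/2}A^T S_i\|_F^2$, so plugging in the convenient choice $p_i = \|W^{1/2}A^T S_i\|_F^2/\|M\|_F^2$ collapses every block of $D^2$ to the same constant $1/\|M\|_F^2$. Thus $D^2 = I_n/\|M\|_F^2$ and the identity
\[W^{1/2} \E{Z} W^{1/2} = \frac{MM^T}{\|M\|_F^2}\]
drops out.

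Next I would extract the smallest eigenvalue of the right-hand side:
\[\lambda_{\min}\!\left(W^{1/2}\E{Z}W^{1/2}\right) = \frac{\sigma_{\min}^2(M)}{\|M\|_F^2} = \frac{1}{\|M^{-1}\|_2^2\,\|M\|_F^2} = \frac{1}{\kappa_{2,F}^2(M)},\]
and substitute into $\rho = 1-\lambda_{\min}(W^{1/2}\E{Z}W^{1/2})$ (from Theorem~\ref{theo:normEconv}), which immediately yields~\eqref{eq:rhoconv}. The equivalent expression for $\kappa_{2,F}^2(M)$ in~\eqref{eq:kappalower} follows from the standard identities $\|M\|_F^2 = \Tr{M^T M} = \Tr{\mathbf{S}^T A W A^T \mathbf{S}}$ and $\|M^{-1}\|_2^2 = 1/\sigma_{\min}^2(M) = 1/\lambda_{\min}(\mathbf{S}^T A W A^T \mathbf{S})$. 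Finally, $\kappa_{2,F}(M)\geq\sqrt{n}$ is immediate from $\|M\|_F^2 = \sum_{i=1}^n \sigma_i^2(M) \geq n\,\sigma_{\min}^2(M)$.

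The main obstacle is the case of block sketches $q_i>1$, where the block $p_i(S_i^T A W A^T S_i)^{-1}$ is no longer a scalar multiple of the identity and the clean identity for $W^{1/2}\E{Z}W^{1/2}$ breaks down. I would handle this by falling back to the L\"owner inequality $B_iB_i^T \preceq \|B_i\|_F^2 P_i$, where $B_i\eqdef W^{1/2}A^TS_i$ and $P_i$ is the orthogonal projection onto $\myRange{B_i}$; summing over $i$ and using $MM^T = \sum_i B_iB_i^T$ then yields $W^{1/2}\E{Z}W^{1/2}\succeq MM^T/\|M\|_F^2$, so~\eqref{eq:rhoconv} persists as an upper bound on $\rho$ (with equality when all $q_i=1$).
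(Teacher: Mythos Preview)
Your argument is correct and more explicit than the paper's, which simply cites Theorem~5.1 of~\cite{Gower2015} for~\eqref{eq:rhoconv} and declares the bound in~\eqref{eq:kappalower} trivial. Your direct route through Proposition~\ref{prop:Ediscrete}---observing that the convenient probabilities collapse every diagonal block of $D^2$ to the scalar $1/\|M\|_F^2$ when $q_i=1$, so that $W^{1/2}\E{Z}W^{1/2}=MM^T/\|M\|_F^2$---is exactly the right computation, and your extraction of $\lambda_{\min}$ and the $\sqrt{n}$ bound are clean.

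Your caveat about the block case $q_i>1$ is well taken and is a subtlety the paper's statement glosses over. Your L\"owner inequality $B_iB_i^T\preceq\|B_i\|_F^2P_i$ is correct (the matrix $\|B_i\|_F^2(B_i^TB_i)^{-1}-I$ is positive semidefinite since $\|B_i\|_F^2=\Tr{B_i^TB_i}$ dominates every eigenvalue of $B_i^TB_i$), and summing gives $W^{1/2}\E{Z}W^{1/2}\succeq MM^T/\|M\|_F^2$, so~\eqref{eq:rhoconv} survives as an \emph{upper bound} on $\rho$. That this is genuinely only an inequality for blocks is easy to see: with $r=1$ and $S_1=\mathbf{S}=I$ one gets $W^{1/2}ZW^{1/2}=I$ deterministically, hence $\rho=0$, whereas $1-1/\kappa_{2,F}^2(W^{1/2}A^T)>0$ unless all singular values of $W^{1/2}A^T$ coincide. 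So your reading is accurate: equality in~\eqref{eq:rhoconv} holds in the vector-sketch regime $q_i=1$ (or more generally whenever each $S_i^TAWA^TS_i$ is a scalar multiple of the identity), and your inequality is the right statement otherwise.
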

\begin{proof}
Theorem~5.1 in~\cite{Gower2015} gives \eqref{eq:rhoconv}. The bound in~\eqref{eq:kappalower} follows trivially.
\end{proof}

Following from Remark~\ref{rem:alg2conv}, we can determine a convergence rate for 
Algorithm~\ref{alg:asym-col} based on the Theorem~\ref{theo:convenconv}. 
\begin{remark}\label{rem:convcols}
  Let $S$ be a complete discrete sampling where $S =S_i$ with probability  
\begin{equation} \label{eq:convprob2}
p_i = \frac{\|W^{1/2}A S_i\|_F^2}{ \|W^{1/2}A\mathbf{S}\|_F^2}.
\end{equation}
Then Algorithm~\ref{alg:asym-col}  converges at the rate  
\begin{equation}
\label{eq:rhoconv2}
\rho_2 = 1- \frac{1}{ \kappa_{2,F}^2(W^{1/2}A\mathbf{S})}.\end{equation}
\end{remark}

\subsection{Optimal and adaptive samplings}

Having decided on the probabilities $p_1,\dots,p_r$ associated with the matrices $S_1,\dots,S_r$ in Proposition~\ref{theo:convenconv}, we can now ask the following question. How should we choose the matrices $\{S_i\}$ if we want $\rho $ to be as small as possible?  Since the rate improves as the {\em condition number} $\kappa^2_{2,F}(W^{1/2}A^T\mathbf{S})$ decreases, we should aim for matrices that minimize the condition number. Notice that the lower bound in \eqref{eq:kappalower} is reached for $\mathbf{S} = (W^{1/2}A^T)^{-1} = A^{-T}W^{-1/2}$. While we do not know $A^{-1}$, we can use our best current approximation of it, $X_k$, in its place. This leads to a method which {\em adapts} the probability distribution governing $S$ throughout the iterative process. This observation inspires a very efficient modification of Algorithm~\ref{alg:sym}, which we call AdaRBFGS (Adaptive Randomized BFGS), and describe in Section~\ref{sec:AdaRBFGS}. Notice that, luckily and surprisingly, our twin goals of computing the inverse and optimizing the convergence rate via the above adaptive trick are compatible.  Indeed, we wish to find $A^{-1}$, whose knowledge gives us the  optimal rate. This should be contrasted with the SDP approach mentioned earlier: i) the SDP could potentially be harder than the inversion problem, and ii) having found the optimal probabilities $\{p_i\}$, we are still not guaranteed the optimal rate. Indeed, optimality is relative to the choice of the matrices $S_1,\dots,S_r$, which can be suboptimal.

\begin{remark}[Adaptive sampling]
The convergence rate~\eqref{eq:rhoconv} suggests how one can select a sampling distribution for $S$ that would result in faster practical convergence. We now detail several practical choices for $W$ and indicate how to sample $S$. These suggestions require that the distribution of $S$ depends on the iterate $X_k$, and thus no longer fit into our framework. Nonetheless, we collect these suggestions here in the hope that others will wish to extend these ideas further, and as a  demonstration of the utility of developing convergence rates.  \\
\indent a) If $W =I$, then Algorithm~\ref{alg:asym-row} converges at the rate $\rho = 1- 1/\kappa_{2,F}^2(A^T\mathbf{S})$, and hence $S$ should be chosen so that $\mathbf{S}$ is a preconditioner of $A^T$. For example $\mathbf{S}=X_{k}^T,$ that is, $S$ should be a sampling of the rows of $X_{k}$. \\
\indent b) If $W =I$, then Algorithm~\ref{alg:asym-col} converges at the rate $\rho = 1- 1/\kappa_{2,F}^2(A\mathbf{S})$, and hence $S$ should be chosen so that $\mathbf{S}$ is a preconditioner of $A$. For example $\mathbf{S}=X_{k}$; that is, $S$ should be a sampling of the columns of $X_{k}$. \\
\indent c) If $A $ is symmetric positive definite, we can choose $W=A^{-1}$, in which case Algorithm~\ref{alg:sym} converges at the rate $\rho =1- 1/\kappa_{2,F}^2(A^{1/2}\mathbf{S}).$  This rate suggests that $S$ should be chosen so that $\mathbf{S}$ is an approximation of $A^{-1/2}.$ In Section~\ref{sec:AdaRBFGS} we develop this idea further, and design the AdaRBFGS algorithm.\\
\indent d) If $W=(A^TA)^{-1}$, then Algorithm~\ref{alg:asym-row} can be efficiently implemented with $S=AV$, where $V$ is a complete discrete sampling. Furthermore, $\rho = 1- 1/\kappa_{2,F}^2(A\mathbf{V}),$ where  $\mathbf{V} = [V_1,\ldots, V_r]$.  This rate suggests that $V$ should be chosen so that  $\mathbf{V}$ is a preconditioner of  $A$. For example, we can set $\mathbf{V}=X_{k}$, i.e., $V$ should be a sampling of the rows of $X_{k}$. \\
\indent e) If $W=(AA^T)^{-1}$, then Algorithm~\ref{alg:asym-col} can be efficiently implemented with $S=A^TV$, where $V$ is a complete discrete sampling. From Remark~\ref{rem:convcols}, the convergence rate of the resulting method is given by
$1- 1/ \kappa_{2,F}^2(A^T\mathbf{V}).$
This rate suggests that $V$ should be chosen so that  $\mathbf{V}$ is a  preconditioner of  $A^T$. For example, $\mathbf{V}=X_{k}^T$; that is, $V$ should be a sampling of the columns of $X_{k}$. \\
\indent f) If $A$ is symmetric positive definite, we can choose $W = A^2$, in which case Algorithm~\ref{alg:sym} can be efficiently implemented with  $S =AV.$ Furthermore $\rho = 1- 1/\kappa_{2,F}^2(A\mathbf{V}).$ This rate suggests that $V$ should be chosen so that  $\mathbf{V}$ is a  preconditioner of  $A$. For example $\mathbf{V}=X_{k},$ that is, $V$ should be a sampling of the rows or the columns of $X_{k}$. 
\end{remark}

\section{Randomized Quasi-Newton Updates} \label{sec:discretemethods}

Algorithms~\ref{alg:asym-row}, \ref{alg:asym-col} and \ref{alg:sym} are in fact families of algorithms indexed by the two parameters: i) positive definite matrix $W$ and ii) distribution $\cal D$ (from which we pick random matrices  $S$). This allows us to design a myriad of specific methods by varying these parameters. Here we highlight some of these possibilities, focusing on complete discrete distributions for $S$ so that convergence of the iterates is guaranteed through Theorems~\ref{theo:normEconv} and~\ref{theo:Enormconv}.
We also compute the convergence rate for these special methods for the convenient probability distribution given by~\eqref{eq:convprob}
and~\eqref{eq:convprob2} (Proposition~\ref{theo:convenconv}) so that the convergence rates~\eqref{eq:rhoconv} and~\eqref{eq:rhoconv2}, respectively, depend on a scaled condition number which is easy to interpret. We will also make some connections to existing qN  and Approximate Inverse Preconditioning methods. Table~\ref{tbl:QN} provides a guide through this section.

\begin{table}[!h]
\begin{center}
{
\footnotesize
\begin{tabular}{|c|c|c|c|c|c|}
\hline
$A$                                          & $W$      & $S$ &  Inverse Equation & Randomized Update & Section\\
\hline
any        &  any & inv. & any & One Step & \ref{subsec:1step}\\
any        &  $ I$ & $e_i$ & $AX = I$ & Simultaneous Kaczmarz (SK) & \ref{subsec:09u09u09Kacz}\\
any       &  $I$ & vector & $XA = I$ & Bad Broyden (BB) & \ref{subsec:09u09u09}\\
sym. & $I$ & vector & $AX = I, X=X^T$ & Powell-Symmetric-Broyden (PSB) & \ref{subsec:PSB} \\
any       &  $ I$ & vector & $XA^{-1} = I$ & Good Broyden (GB) & \ref{subsec:09u09u09good}\\
s.p.d.       &  $A$ & vector & $XA^{-1} =I , X=X^T$ & Davidon-Fletcher-Powell (DFP) & \ref{sec:RDFP} \\
s.p.d.       &  $A^{-1}$ & vector & $AX = I, X=X^T$ & Broyden-Fletcher-Goldfarb-Shanno (BFGS) & \ref{sec:RBFGS} \\
any      & $(A^T A)^{-1}$ & vector & $AX=I$ & Column & \ref{sec:RCM}\\
\hline
\end{tabular}
}
\end{center}
\caption{\footnotesize Specific randomized updates for inverting matrices discussed in this section, obtained as special cases of our algorithms. First column: ``sym'' means ``symmetric'' and ``s.p.d.'' means ``symmetric positive definite''; Third column:  ``inv'' means invertible. Block versions of all these updates are obtained by choosing $S$ as a matrix with more than one column.}\label{tbl:QN}
\end{table}
 

\subsection{One Step Update}\label{subsec:1step}

We have the freedom to select $S$ as almost any random matrix that has full column rank. This includes choosing $S$ to be a constant and invertible matrix, such as the identity matrix $I$,  in which case 
$X_{1}$ must be equal to the inverse. Indeed, the sketch-and-project formulations of all our algorithms reveal that. For Algorithm~\ref{alg:asym-row}, for example, the sketched system is $S^T AX = S^T$, which is equivalent to $AX=I$, which has as its unique solution $X=A^{-1}$.  Hence, $X_1 = A^{-1}$, and we have convergence in one iteration/step. Through inspection of the complexity rate, we see that $W^{1/2}\E{Z}W^{1/2} = I$ and  $\rho = \lambda_{\min}(W^{1/2}\E{Z}W^{1/2}) =1$, thus this one step convergence  is predicted in theory by Theorems~\ref{theo:normEconv} and~\ref{theo:Enormconv}.

\subsection{Simultaneous Randomized Kaczmarz Update}\label{subsec:09u09u09Kacz}

Perhaps the most natural choice for the weighting matrix $W$ is the identity $W=I.$ With this choice, Algorithm~\ref{alg:asym-row} is equivalent to applying the randomized  Kaczmarz update simultaneously to the $n$ linear systems encoded in $AX=I$. To see this, 
note that the sketch-and-project viewpoint~\eqref{eq:NF} of Algorithm~\ref{alg:asym-row} is
 \begin{align} \label{eq:NFbadbroyadj}
X_{k+1} = &\arg \min_{X \in \R^{n\times n}} \frac{1}{2}\norm{X - X_{k}}_{F}^2 \quad \mbox{subject to }\quad  S^TAX =S^T,
\end{align} 
which, by~\eqref{eq:Xupdate}, results in the explicit update 
\begin{equation} \label{eq:badbroyadj}
X_{k+1} = X_{k}+ A^TS(S^TAA^TS)^{-1}S^T (I-AX_{k}).
\end{equation}
If $S$ is a random coordinate vector, then~\eqref{eq:NFbadbroyadj} is equivalent to projecting the $j$th column of $X_k$ onto the solution space of $A_{i:}x = \delta_{ij},$ which is exactly an iteration of the randomized Kaczmarz update applied to solving $Ax=e_j.$
In particular, if $S = e_i$ with probability $p_i = \norm{A_{i:}}_2^2/\norm{A}_F^2$ then according to Proposition~\ref{theo:convenconv}, the rate of convergence of update~\eqref{eq:badbroyadj} is given by
\[\E{\norm{X_{k} -A^{-1} }_F^2} \leq \left(1- \frac{1}{\kappa_{2,F}^2(A)}\right)^k \norm{X_{0} -A^{-1} }_F^2,\]
where we used that $\kappa_{2,F}(A) = \kappa_{2,F}(A^T).$ This is exactly the rate of convergence given by Strohmer and Vershynin in~\cite{Strohmer2009} for the randomized Kaczmarz method.

\subsection{Randomized Bad Broyden Update}\label{subsec:09u09u09}

The update~\eqref{eq:badbroyadj} can also be viewed as an adjoint form of the bad Broyden update~\cite{Broyden1965,Griewank2012}. To see this,
if we use  Algorithm~\ref{alg:asym-col} with $W=I$, then the iterative process is
\begin{equation} \label{eq:badbroy}
X_{k+1} = X_{k}+ (I-X_{k}A)S(S^TA^TAS)^{-1}S^TA^T. 
\end{equation}
 This update~\eqref{eq:badbroy} is a randomized block form of the {\em bad Broyden update}~\cite{Broyden1965,Griewank2012}. In the qN setting, $S$ is not random, but rather the previous step direction $S = \delta \in \R^n$. Furthermore, if we rename $\gamma \eqdef AS\in \R^n$, then~\eqref{eq:badbroy} becomes  
\begin{equation}\label{eq:badbroyden}
X_{k+1} = X_{k}+ \frac{\delta-X_k\gamma}{\norm{\gamma}_2^2}\gamma^T,
\end{equation}
which is the standard way of writing the bad Broyden update~\cite{Griewank2012}.
Update~\eqref{eq:badbroyadj} is an adjoint form of bad Broyden in the sense that, if we transpose~\eqref{eq:badbroyadj}, set $S = \delta$ and write $\gamma = A^TS$, we obtain the bad Broyden update, but applied to $X_k^T$  instead.

From the constrain-and-approximate viewpoint~\eqref{eq:RFcols} we give a new interpretation to the bad Broyden update, namely, the update~\eqref{eq:badbroyden} can be written as
\[X_{k+1} = \arg_X \min_{X \in \R^{n\times n}, \; y\in \R^n} \frac{1}{2}\norm{X - A^{-1}}_{F}^2 \quad \mbox{subject to } \quad  X=X_{k} +y \gamma^T. \]
Thus, {\em the bad Broyden update is the best rank-one update (with fixed $\gamma$) approximating the inverse.} We can determine the rate at which our randomized variant of the BB update~\eqref{eq:badbroy} converges by using Remark~\ref{rem:convcols}.
In particular, if $S =S_i$ with probability   $p_i = \left.\norm{A S_i}_F^2\right/\norm{A\mathbf{S}}_F^2$, then~\eqref{eq:BFGSas} converges  with the rate
\[\E{\norm{X_{k} -A^{-1} }_F^2} \leq \left(1- \frac{1}{\kappa_{2,F}^2(A\mathbf{S})}\right)^k \norm{X_{0} -A^{-1} }_F^2. \]


\subsection{Randomized Powell-Symmetric-Broyden Update}\label{subsec:PSB}

If $A$ is symmetric and we use  Algorithm~\ref{alg:sym} with $W=I$, the iterates are given by 
\begin{align}
 X_{k+1} &= X_{k} +AS^T (S^TA^2S)^{-1}SA
(X_{k}AS -S)\left((S^TA^2S)^{-1} S^T A-I \right) \nonumber \\
&-(X_{k}AS -S)(S^TA^2S)^{-1} S^T A, \label{eq:PSB}
 \end{align}
which is a randomized block form of the Powell-Symmetric-Broyden update~\cite{Gower2014c}.
If $S =S_i$ with probability $p_i = \norm{AS_i}_F^2/\norm{A\mathbf{S}}_F^2$, then according to Proposition~\ref{theo:convenconv}, the iterates~\eqref{eq:PSB} and~\eqref{eq:badbroyadj} converge according to
\[\E{\norm{X_{k} -A^{-1} }_F^2} \leq \left(1  - \frac{1}{\kappa^2_{2,F}(A^T\mathbf{S})}\right)^k \norm{X_{0} - A^{-1}}_F^2.\]

\subsection{Randomized Good Broyden Update}
\label{subsec:09u09u09good}

Next we present a method that shares certain properties with Gaussian elimination and can be viewed as a randomized block variant of the good Broyden update~\cite{Broyden1965,Griewank2012}. This method requires the following adaptation of Algorithm~\ref{alg:asym-col}: instead of sketching the inverse equation, consider the update~\eqref{eq:guasssketch} that performs a column sketching of the equation $XA^{-1} = I$ by right multiplying with $Ae_i$, where $e_i$ is the $i$th coordinate vector. Projecting an  iterate $X_k$ onto this sketched equation gives 
\begin{align}\label{eq:guasssketch}
X_{k+1} = &\arg \min_{X \in \R^{n\times n}} \frac{1}{2} \norm{X - X_{k}}_{F}^2 \quad \mbox{subject to }\quad  Xe_i =Ae_i.
\end{align}
 The iterates defined by the above are given by
\begin{equation}\label{eq:gaussinv}X_{k+1}=X_{k} +(A-X_k)e_ie_i^T.
\end{equation}
Given that we are sketching and projecting onto the solution space of $XA^{-1}=I$, the iterates of this method converge to $A$. Therefore the inverse iterates $X^{-1}_k$ converge to $A^{-1}.$ We can efficiently compute the inverse iterates 
by using the Woodbury formula~\cite{Woodbury1950} which gives
\begin{equation}\label{eq:gauss}
X_{k+1}^{-1} = X_k^{-1}-\frac{(X_k^{-1}A-I)e_ie_i^TX_k^{-1}}{e_i^TX_k^{-1}Ae_i}.
\end{equation}
This update~\eqref{eq:gauss} behaves like Gaussian elimination in the sense that, if $i$ is selected in a cyclic fashion, that is $i=k$ on the $k$th iteration, then from~\eqref{eq:gaussinv} it is clear that 
\[ X_{k+1}e_i = Ae_i, \quad \mbox{thus} \quad X_{k+1}^{-1}Ae_i =e_i, \quad \mbox{for }i=1\ldots k.\]
That is, on the $k$th iteration, the first $k$ columns of the matrix $X_{k+1}^{-1}A$ are equal to the first $k$ columns of the identity matrix. Consequently,  $X_n = A$ and $X^{-1}_n = A^{-1}.$ If instead, we select $i$ uniformly at random, then we can adapt Proposition~\ref{theo:convenconv} by swapping each occurrence of $A^T$ for $A^{-1}$ and observing that $S_i = Ae_i$ thus $\mathbf{S}=A$. Consequently the iterates~\eqref{eq:gaussinv} converge to $A$  at a rate of
\[\rho = 1-\kappa_{2,F}^2 \left(A^{-1}A\right)= 1-\frac{1}{n}, \]
and thus the lower bound~\eqref{eq:rholower} is achieved and $X_k$ converges to $A$ according to
\[\E{\norm{X_{k} -A }_F^2} \leq \left(1-\frac{1}{n}\right)^k \norm{X_{0} -A}_F^2. \]
Note that this does not say anything about how fast 
$X^{-1}_k$ converges to $A^{-1}$. Therefore~\eqref{eq:gauss} is not an efficient method for calculating an approximate inverse. If we replace $e_i$  by a \emph{step} direction $\delta_k \in \R^d$, then the update~\eqref{eq:gauss} is known as the {\em good Broyden} update~\cite{Broyden1965,Griewank2012}.

\subsection{Approximate Inverse Preconditioning}\label{sec:AIP} 

When $A$ is symmetric positive definite, we can choose $W =A^{-1}$, and Algorithm~\ref{alg:asym-row} is given by
\begin{equation} \label{eq:BFGSas} X_{k+1} = X_{k} +S(S^TAS)^{-1}S^T(I-AX_{k}). \end{equation}
The constrain-and-approximate viewpoint~\eqref{eq:RF} of this update is
 \[X_{k+1} = \arg_X \min_{X\in \R^{n\times n}, Y\in \R^{n\times q}} \frac{1}{2}\norm{A^{1/2}XA^{1/2} - I}_{F}^2 \quad \mbox{subject to } \quad  X=X_{k} + S Y^T.\]
This viewpoint reveals that the update~\eqref{eq:BFGSas} is akin to the Approximate Inverse Preconditioning (\emph{AIP}) methods~\cite{Benzi1999,Gould1998,Kolotilina1993,Huckle2007}.


 We can determine the rate a which~\eqref{eq:BFGSas}  converges using~\eqref{eq:rhoconv}. In particular, if $S =S_i$ with probability   $p_i = \Tr{S_i^T A S_i}/\Tr{\mathbf{S}^TA\mathbf{S}}$, then~\eqref{eq:BFGSas} converges with rate 
\begin{equation}
\rho \overset{\eqref{eq:rhoconv}}{=} 1- \frac{1}{\kappa_{2,F}^2(A^{1/2}\mathbf{S})} = 1- \frac{\lambda_{\min}(\mathbf{S}^TA\mathbf{S})}{\Tr{\mathbf{S}^TA\mathbf{S}}}, \label{eq:BBonv}  
\end{equation}
and according to
\[\E{\norm{A^{1/2}X_k A^{1/2} - I }_F^2} \leq \left(1- \frac{\lambda_{\min}(\mathbf{S}^TA\mathbf{S})}{\Tr{\mathbf{S}^TA\mathbf{S}}}\right)^k \norm{A^{1/2}X_0A^{1/2} - I}_F^2.\]

%
%
%
%

\subsection{Randomized DFP Update}\label{sec:RDFP}

If $A$ is symmetric positive definite then we can choose $W =A.$ Furthermore, if we adapt the sketch-and-project formulation~\eqref{eq:NF} to sketch the equation $XA^{-1} = I$ by right multiplying by $AS,$ and additionally impose symmetry on the iterates, we arrive at the following update:
\begin{align}\label{eq:DFPsketch}
X_{k+1} = &\arg \min_{X \in \R^{n\times n}} \frac{1}{2} \norm{X - X_{k}}_{F(A^{-1})}^2 \quad \mbox{subject to }\quad  XS =AS, \quad X =X^T.
\end{align}
The solution to the above is given by\footnote{To arrive at this solution, one needs to swap the occurrences of $AS$ for $S$ and plug in  $B=A^{-1}$ in~\eqref{eq:Xupdatesym}. This is because  swapping $AS$ for $S$ in~\eqref{eq:DFPsketch} gives~\eqref{eq:NFsym}.}
\begin{equation}\label{eq:DFPinv}
X_{k+1}= A\Omega A +\left( I- A\Omega 
\right)X_k\left(I -\Omega A\right), \quad \text{where} \quad \Omega = S (S^TAS)^{-1} S^T.
\end{equation}
Using the Woodbury formula~\cite{Woodbury1950}, we find that 
\begin{equation}\label{eq:DFP} X_{k+1}^{-1} = X_{k}^{-1} + AS (S^TAS)^{-1} S^TA  -
X_k^{-1}  S \left( S^T X_k^{-1}  S  \right)^{-1} S^T X_k^{-1}.\end{equation}
The update~\eqref{eq:DFP} is a randomized variant of the Davidon-Fletcher-Powell (DFP) update~\cite{Davidon1959,Fletcher1960}. We can adapt Proposition~\ref{theo:convenconv} to determine the rate at which $X_k$ converges to $A$ by swapping each occurrence of $A^T$ for $A^{-1}$. If we let $S_i= Ae_i$  with probability $ p_i = \left. \lambda_{\min}(A) \right/ \Tr{A},$ for instance,
then the iterates~\eqref{eq:gaussinv} satisfy
\begin{equation}\label{eq:convDFP}
\E{\norm{X_{k} -A }_{F(A^{-1})}^2} \leq \left(1-\frac{\lambda_{\min}(A)}{\Tr{A}}\right)^k \norm{X_{0} - A}_{F(A^{-1})}^2.
\end{equation}
Thus $X_k$ converges to $A$. However, this does not indicate at what rate does $X_{k}^{-1}$ converge to $A^{-1}$. This is in contrast with  randomized BFGS, which produces iterates that converge to $A^{-1}$ at this same rate, as we show in the next section.   This perhaps sheds new light on why BFGS update performs better than the DFP update.

\subsection{Randomized BFGS Update}\label{sec:RBFGS} 

 
If $A$ is symmetric and positive definite, we can  choose $W=A^{-1}$ and apply  Algorithm~\ref{alg:sym} to maintain symmetry of the iterates. The iterates are given by
\begin{equation}\label{eq:qunac}
X_{k+1}  =S(S^TAS)^{-1}S^T+ \left(I-S(S^TAS)^{-1}S^TA\right) X_{k} \left(I -AS(S^TAS)^{-1}S^T \right).
\end{equation}
This is a block variant, see~\cite{Gower2014c}, of the BFGS update~\cite{Broyden1965,Fletcher1960,Goldfarb1970,Shanno1971}.
The constrain-and-approximate viewpoint gives a new interpretation to the block BFGS update. That is, from~\eqref{eq:NFsym}, the iterates~\eqref{eq:qunac} can be equivalently defined by
\[X_{k+1} = \arg_X  \min_{X\in \R^{n\times n}, Y\in \R^{n\times q}} \frac{1}{2}\norm{X -A^{-1}}_{F(A)}^2 \quad \mbox{subject to} \quad
X = X_k + SY^T +YS^T.\]

Thus, the standard and  block BFGS updates produced an improved estimate by doing best approximation to the  inverse  subject to a particular symmetric affine space passing through the current iterate. This is a  new way of interpreting BFGS.

If $p_i = \Tr{S_i^TAS_i}/\Tr{\mathbf{S}A\mathbf{S}^T}$, then according to Proposition~\ref{theo:convenconv}, the update~\eqref{eq:qunac}  converges according to
\begin{equation}\label{eq:convqunac}
\E{\norm{X_k -A^{-1}}_{F(A)}^2} \leq \left(1 - \frac{1}{\kappa^2_{2,F}(A^{1/2}\mathbf{S})}\right)^k\norm{X_0 -A^{-1}}_{F(A)}^2.
\end{equation}

It is remarkable that the update~\eqref{eq:qunac} preserves positive definiteness of the iterates.  Indeed, assume that $X_k$ is positive definite and let $v \in \R^n$ and $P \eqdef S(S^TAS)^{-1}S^T.$ Left and right multiplying~\eqref{eq:qunac} by $v^T$ and $v$, respectively, gives
\[v^TX_{k+1}v = v^TPv+ v^T\left(I-PA\right) X_{k} \left(I -AP\right)v \geq 0.\]
Thus $v^TX_{k+1}v =0$ implies that $Pv=0$ and $ \left(I -AP \right)v=0,$  which when combined gives $v =0.$ This proves that $X_{k+1}$ is positive definite.
Thus the update~\eqref{eq:qunac} is particularly well suited for calculating the inverse of a positive definite matrices.

In Section~\ref{sec:AdaRBFGS}, we detail an update designed to improve the convergence rate in~\eqref{eq:convqunac}. The result is a method that is able to invert large scale positive definite matrices orders of magnitude faster than the state-of-the-art.

\subsection{Randomized Column Update}\label{sec:RCM}
We now describe a method whose deterministic variant does not seem to exist in the literature. For this update, we need to perform a linear transformation of the sampling matrices. For this, let $V$ be a complete discrete sampling where $V= V_i\in \R^{n\times q_i}$ with probability $p_i>0,$ for $i =1,\ldots,r.$ Let $\mathbf{V} =[V_1,\ldots, V_r].$
 Let the sampling matrices be defined as $S_i=AV_i \in \R^{n \times q_i}$ for $i =1,\ldots,r$. As $A$ is nonsingular, and $\mathbf{S} = A \mathbf{V}$, then $S$ is a complete discrete sampling. With these choices and $W^{-1}=A^TA$, the sketch-and-project viewpoint~\eqref{eq:NF} is given by 
\begin{align}
X_{k+1} =\arg \min_{X \in \R^{n\times n}} \frac{1}{2}\norm{X - X_{k}}_{F(A^TA)}^2 \nonumber\quad  \mbox{subject to } \quad V_i^TA^TAX = V_i^TA^T.
\end{align} 
The solution to the above are the iterates of  Algorithm~\ref{alg:asym-row}, which is given by
\begin{equation}\label{eq:RRM}
X_{k+1} = X_{k} +V_i(V_i^T A^T AV_i)^{-1}V_i^T(A^T-A^TAX_{k}).
\end{equation}
 From the constrain-and-approximate viewpoint~\eqref{eq:RF}, this can be written as
 \[X_{k+1} = \arg  \min_{X\in \R^{n\times n}, Y\in \R^{n\times q}}  \frac{1}{2}\norm{A(XA^T-I)}_{F}^2 \quad \mbox{subject to } \quad  X=X_{k} + V_i Y^T.\]
 
With the same parameters $S$ and $W$, the iterates of Algorithm~\ref{alg:sym} are given by
    \begin{align}
X_{k+1}  &=X_{k} + V_i(V_i^TA^2 V_i)^{-1}V_i^T(AX_{k} -I)\left(A^2V_i(V_i^T A^2V_i)^{-1}V_i^T - I \right)\nonumber \\
& -(X_{k}A -I)AV_i(V_i^T A^2 V_i)^{-1}V_i^T. \label{eq:symROM}
\end{align}
If we choose
$p_i = \norm{(AA^T)^{-1/2}AA^TV_i}_F^2 /\norm{(AA^T)^{-1/2}AA^T\mathbf{V}}_F^2 = \norm{A^TV_i}_F^2/\norm{A^T\mathbf{V}}_F^2,$
 then according to Proposition~\ref{theo:convenconv}, the iterates~\eqref{eq:RRM} and~\eqref{eq:symROM} converge linearly in expectation to the inverse according to
\begin{equation}\label{eq:RRMconv}
\E{\norm{A(X_kA^T-I) }_{F}^2} \leq \left(1 - \frac{1}{\kappa^2_{2,F}(A\mathbf{V})}\right)^k \norm{A(X_0A^T-I)}_{F}^2.
\end{equation}
 There also exists an analogous ``row'' variant of~\eqref{eq:RRM}, which arises by using  Algorithm~\ref{alg:asym-col}, but we do not explore it here.

\section{AdaRBFGS: Adaptive Randomized BFGS}\label{sec:AdaRBFGS}

All the updates we have developed thus far use a sketching matrix $S$ that is sampled in an i.i.d.\ fashion from a fixed distribution $\cal D$ at each iteration. In this section we assume that $A$ is symmetric positive definite, and propose AdaRBFGS: a variant of the RBFGS update, discussed in Section~\ref{sec:RBFGS}, which  {\em adaptively} changes the distribution $\cal D$ throughout the iterative process. Due to this change,  Theorems~\ref{theo:normEconv} and~\ref{theo:Enormconv} and Proposition~\ref{theo:convenconv} are no longer applicable. Superior numerical efficiency of this update is verified through extensive numerical experiments in Section~\ref{sec:numerical}.  

\subsection{Motivation}
We now motivate the design of this new update by examining the convergence rate~\eqref{eq:convqunac} of the RBFGS iterates~\eqref{eq:qunac}. Recall that in RBFGS we choose $W=A^{-1}$ and $S=S_i$ with probability 
\begin{equation}\label{eq:ihsih0(0hI}p_i = \frac{\Tr{S_i^TAS_i}}{\Tr{\mathbf{S}A\mathbf{S}^T}}, \qquad i=1,2,\dots,r,\end{equation} 
where $S$ is a complete discrete sampling and ${\bf S} = [S_1,\dots,S_r]$, then
\[\rho = 1 - \frac{1}{\kappa_{2,F}^2(A^{1/2}\mathbf{S})} \overset{\eqref{eq:kappalower}}{=} 1 - \frac{\lambda_{\min}({\bf S}^T A {\bf S}) }{ \Tr{{\bf S}^T A {\bf S}}}.\]
Consider now the question of choosing the  matrix $\bf S$ in such a way that $\rho$ is as small as possible. Note that the  optimal choice is any $\bf S$ such that ${\bf S}^T A {\bf S}= I$. Indeed, then $\rho = 1-1/n$, and the lower bound~\eqref{eq:kappalower} is attained. For instance, the choice ${\bf S} = A^{-1/2}$ would be optimal. Hence, in each iteration we would choose $S$ to be a  random column (or random column submatrix) of $A^{-1/2}$. Clearly, this is not a feasible choice, as we do not know the inverse of $A$. In fact, it is $A^{-1}$  which we are trying to find! However, this  leads to the following  observation: {\em the goals of finding the inverse of $A$ and of designing an optimal distribution $\cal D$ are in synchrony.} 

\subsection{The algorithm}

While we do not know $A^{-1/2}$, we can use the iterates $\{X_k\}$ themselves to construct a good {\em adaptive} sampling. Indeed,  the iterates contain information about the inverse and hence we can use them to design a better sampling $S$. In order to do so, it will be useful to maintain a factored form of the iterates,  \begin{equation}\label{eq:X_k_98987H}X_k  = L_kL_k^T,\end{equation}
 where $L_k \in \R^{n\times n}$ is invertible. With this in place, let us choose $S$  to be a random column submatrix of $L_k$. In particular, let $C_1,C_2,\dots,C_r$  be nonempty subsets of $[n]=\{1,2,\dots,n\}$ forming a partition of $[n]$, and at iteration $k$ choose \begin{equation}\label{eq:siug98sUJi}S = L_k I_{:C_i} \eqdef S_i, \end{equation} with probability $p_i$ given by \eqref{eq:ihsih0(0hI} for $i=1,2,\dots,r$. 
For simplicity, assume that $C_1=\{1,\dots,c_1\}$, $C_2 = \{c_1+1,\dots,c_2\}$ and so on, so that, by the definition of $\bf S$, 
\begin{equation}\label{eq:S_9898y8}{\bf S} = [S_1, \dots, S_r] = L_k.\end{equation}
Note that now both $\bf S$ and $p_i$ depend on $k$.  The method described above satisfies the following recurrence.

\begin{theorem}\label{lem:9hs9hIKKK} After one step of the AdaRBFGS method we have
\begin{equation}\label{eq:98h98hJJJ}\E{\norm{X_{k+1} -A^{-1}}_{F(A)}^2\, | \, X_k} \leq \left(1-\frac{\lambda_{\min }(AX_k )}{\Tr{AX_k}}\right) \norm{X_k -A^{-1}}_{F(A)}^2. \end{equation}
\end{theorem}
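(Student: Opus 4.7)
The plan is to observe that, once we condition on $X_k$, the AdaRBFGS step is a single step of the non-adaptive randomized BFGS (Algorithm~\ref{alg:sym} with $W=A^{-1}$) applied with the specific complete discrete sampling determined by $L_k$. Therefore, the conditional expectation inequality is a direct consequence of Theorem~\ref{theo:Enormconv} specialized to this sampling via Proposition~\ref{theo:convenconv}; the work is then purely algebraic simplification of the rate.

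First, I would fix $X_k$ (and hence $L_k$) and note that the probabilities $p_i$ defined in \eqref{eq:ihsih0(0hI} together with the matrices $S_i = L_k I_{:C_i}$ from \eqref{eq:siug98sUJi} satisfy the assumptions of Proposition~\ref{theo:convenconv} with $\mathbf{S}=L_k$ (by \eqref{eq:S_9898y8}) and $W=A^{-1}$. Since Theorem~\ref{theo:Enormconv} also holds after one step (its proof applies conditionally), we obtain
\begin{equation*}
\E{\norm{X_{k+1}-A^{-1}}_{F(A)}^2 \,|\, X_k} \;\leq\; \rho_k \,\norm{X_k-A^{-1}}_{F(A)}^2,
\end{equation*}
with the conditional rate
\begin{equation*}
\rho_k \;=\; 1-\frac{1}{\kappa_{2,F}^2(W^{1/2}A^T\mathbf{S})} \;=\; 1-\frac{\lambda_{\min}(\mathbf{S}^T A W A^T \mathbf{S})}{\Tr{\mathbf{S}^T A W A^T \mathbf{S}}}.
\end{equation*}

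The second step is to simplify $\rho_k$ using the structure $W=A^{-1}$, $A=A^T$, and $\mathbf{S}=L_k$. Symmetry of $A$ gives $AWA^T=A$, so $\mathbf{S}^T A W A^T \mathbf{S} = L_k^T A L_k$. I would then use the standard fact that $L_k^T A L_k$ and $A L_k L_k^T = A X_k$ are similar (they share the same nonzero spectrum, and both are $n\times n$ with $L_k$ invertible), so
\begin{equation*}
\lambda_{\min}(L_k^T A L_k) = \lambda_{\min}(AX_k), \qquad \Tr{L_k^T A L_k} = \Tr{AX_k},
\end{equation*}
where the trace identity uses cyclicity. Substituting these into $\rho_k$ yields exactly $1-\lambda_{\min}(AX_k)/\Tr{AX_k}$, which is the factor appearing in \eqref{eq:98h98hJJJ}.

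There is no real obstacle here beyond making the conditioning precise: one must explicitly verify that, given $X_k$, the sampling distribution for $S$ is a deterministic function of $X_k$, so the one-step non-adaptive bound applies conditionally. Everything else is an algebraic identification of the convergence rate with the Rayleigh-type quotient $\lambda_{\min}(AX_k)/\Tr{AX_k}$ via the similarity between $L_k^T A L_k$ and $AX_k$.
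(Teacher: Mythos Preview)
Your proposal is correct and follows the same high-level strategy as the paper: condition on $X_k$, observe that the step is then a non-adaptive RBFGS step with the fixed complete discrete sampling $\mathbf{S}=L_k$, and apply the one-step contraction from the argument of Theorem~\ref{theo:Enormconv}. The difference lies only in how the rate is bounded. The paper computes $\E{Z\mid X_k}$ via Proposition~\ref{prop:Ediscrete} and then bounds $\lambda_{\min}(A^{1/2}L_kD^2L_k^TA^{1/2})$ from below using the product inequality $\lambda_{\min}(BC)\geq \lambda_{\min}(B)\lambda_{\min}(C)$ together with $\lambda_{\max}(S_i^TAS_i)\leq \Tr{S_i^TAS_i}$. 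You instead invoke Proposition~\ref{theo:convenconv} as a black box and then simplify via the similarity of $L_k^TAL_k$ and $AL_kL_k^T=AX_k$; this is tidier and avoids the explicit eigenvalue bounding. One caveat: the rate formula in Proposition~\ref{theo:convenconv} is an exact identity only when each $S_i$ is a single column; for block sizes $|C_i|>1$ it yields an upper bound on $\rho$ rather than the exact value. This does not affect your argument, since you only need $\rho_k\leq 1-\lambda_{\min}(L_k^TAL_k)/\Tr{L_k^TAL_k}$, but you should write ``$\leq$'' rather than ``$=$'' when quoting that rate.
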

\begin{proof}
Using the same arguments as those in the proof of Theorem~\ref{theo:Enormconv}, we obtain
\begin{equation}\label{eq:Ec}
\E{\norm{X_{k+1} -A^{-1}}_{F(A)}^2\, | \, X_k} \leq \left(1-\rho_k\right) \norm{X_k -A^{-1}}_{F(A)}^2,
\end{equation}
where $\rho_k\eqdef\lambda_{\min}\left(A^{-1/2}\E{Z \;|\; X_k}A^{-1/2}\right)$ and
   \begin{equation}\label{eq:Zxxx}
   Z \overset{\eqref{eq:Zxxx}}{=}AS_i(S_i^T A S_i)^{-1}S_i^T A.
\end{equation}
So, we only need to show that
$\rho_k \geq \lambda_{\min }(AX_k ) / \Tr{AX_k}$. Since $S$ is a complete discrete sampling, Proposition~\ref{prop:Ediscrete} applied to our setting says that
\begin{equation}\label{eq:hYhGfR6}\E{Z \;|\; X_k} =  A \mathbf{S} D^2 \mathbf{S}^T  A, \qquad \text{where} \end{equation}
 \begin{equation} \label{eq:Dxxx}
D~\eqdef~\mbox{Diag}\left( \sqrt{p_1}(S_1^T A  S_1)^{-1/2}, \ldots, \sqrt{p_r}(S_r^T A  S_r)^{-1/2}\right).\end{equation}
We now have
\begin{eqnarray*}
\rho_k &\overset{\eqref{eq:hYhGfR6}+\eqref{eq:S_9898y8}}{\geq}& \lambda_{\min}\left(A^{1/2}L_k L_k^T A^{1/2}\right) \lambda_{\min}(D^2) 
\quad \overset{\eqref{eq:X_k_98987H}}{=} \quad   \frac{\lambda_{\min }(AX_k )}{\lambda_{\max}(D^{-2})} \\
& \overset{\eqref{eq:Dxxx}}{=} & \frac{\lambda_{\min }(AX_k )}{\max_{i} \lambda_{\max}(S_i^T A S_i) / p_i}
\quad \geq \quad \frac{\lambda_{\min }(AX_k )}{\max_{i} \Tr{S_i^T A S_i} / p_i}
\overset{\eqref{eq:ihsih0(0hI} + \eqref{eq:S_9898y8}}{=} 
\frac{\lambda_{\min }(AX_k )}{\Tr{A X_k}}.
\end{eqnarray*}
In the second equality we have used the fact that the largest eigenvalue of  a block diagonal matrix is equal to the maximum of the largest eigenvalues of the blocks.
\end{proof}

If $X_k$ converges to $A^{-1}$, then necessarily the one-step rate of AdaRBFGS proved in Theorem~\ref{lem:9hs9hIKKK} asymptotically reaches the lower bound
\[\rho_k \eqdef  1 - \frac{\lambda_{\min}(AX_k)}{\Tr{A X_k}} \to 1-\frac{1}{n}.\]

In other words, as long as this method works, the convergence rate gradually improves, and  becomes asymptotically optimal and independent of the condition number. We leave a deeper analysis of this and other adaptive variants of the methods developed in this paper to future work.

\subsection{Implementation}

To implement the AdaRBFGS update, we need to maintain the iterates $X_k$ in the factored form \eqref{eq:X_k_98987H}. Fortunately,  a factored form of the update~\eqref{eq:qunac} was introduced in~\cite{Gratton2011}, which we shall now describe and adapt to our objective. Assuming that $X_k$ is symmetric positive definite such that $X_k = L_k L_k^T$, we shall describe how to obtain a corresponding factorization of $X_{k+1}$. Letting $G_k =  (S^TL_k^{-T}L_k^{-1}S)^{1/2}$ and $R_k= (S^TAS)^{-1/2}$, it can be verified through direct inspection~\cite{Gratton2011} that $X_{k+1} = L_{k+1} L_{k+1}^T$, where
\begin{equation}\label{eq:Chol}L_{k+1} = L_k +S R_k \left(G_k^{-1}S^T L_k^{-T}-R_k ^T S^TA L_k\right).
\end{equation}
If we instead of \eqref{eq:siug98sUJi} consider the more general update $S = L_k \tilde{S}$, where $\tilde{S}$ is chosen in an i.i.d.\ fashion from some fixed distribution $\cal \tilde{D}$, then 
\begin{equation}
 L_{k+1}  = L_k +L_k \tilde{S} R_k \left(({\tilde{S}}^T \tilde{S})^{-1/2}\tilde{S}^T -R_k^T \tilde{S}^T L_k^T A L_k\right). \label{eq:Cholimprov}
\end{equation}
The above can now be implemented efficiently, see Algorithm~\ref{alg:AdaRBFGS}. 

\begin{algorithm}[!h]
\begin{algorithmic}[1]
\State \textbf{input:} symmetric positive definite matrix $A$
\State \textbf{parameter:} ${\cal \tilde{D}}$ = distribution over random matrices with $n$ rows 
\State \textbf{initialize:} pick invertible $L_0 \in \R^{n\times n}$

\For {$k = 0, 1, 2, \dots$}
	\State Sample an independent copy $\tilde{S} \sim {\cal \tilde{D}}$
	\State Compute $S = L_k \tilde{S}$ 
	\Comment $S$ is sampled adaptively, as it depends on $k$
	\State Compute $R_k = (\tilde{S}^T A \tilde{S})^{-1/2}$ 
	\State $L_{k+1} = L_k +S R_k \left((\tilde{S}^T \tilde{S})^{-1/2}\tilde{S}^T -R_k^T S^T A L_k\right)$
	\Comment Update the factor
\EndFor
\State \textbf{output:} $X_k = L_k L_k^T$
\end{algorithmic}

\caption{ Adaptive Randomized BFGS  (AdaRBFGS)}
\label{alg:AdaRBFGS}
\end{algorithm}

In Section~\ref{sec:numerical} we test two variants based on~\eqref{eq:Cholimprov}. The first is  the \emph{AdaRBFGS\_gauss} update, in which the entries of $\tilde{S}$ are standard Gaussian. The second is \emph{AdaRBFGS\_cols}, where $\tilde{S} =I_{:C_i}$, as described above, and  $|C_i| =q$ for all $i$ for some $q$.

\section{Numerical Experiments} \label{sec:numerical}

Given demand for approximate inverses of positive definite matrices in preconditioning and in variable metric optimization methods, we restrict our tests to inverting positive definite matrices. The code for this section can downloaded from the authors' webpages, together with scripts that allow for the tests we describe here to be easily reproduced.

We test four iterative methods for inverting matrices. This rules out the all-or-nothing direct methods such as Gaussian elimination of LU based methods. For our tests we use two variants of  Algorithm~\ref{alg:AdaRBFGS}: AdaRBFGS\_gauss, where $\tilde{S}\in\R^{n\times q}$ is a normal Gaussian matrix, and  
AdaRBFGS\_cols, where $\tilde{S}$ consists of a collection of $q$ distinct coordinate vectors in $\R^n$, selected uniformly at random. 
At each iteration the AdaRBFGS methods compute the inverse of a small matrix $S^TAS$ of dimension $q\times q$.
To invert this matrix we use MATLAB's inbuilt \texttt{inv} function, which uses $LU$ decomposition or Gaussian elimination, depending on the input. Either way, \texttt{inv} costs $O(q^3).$ For simplicity, we selected $q=\sqrt{n}$ in all our tests. 


We compare our method to two well established and competitive methods, the \emph{Newton-Schulz method}~\cite{Schulz1933}
and the global self-conditioned Minimal Residual (\emph{MR}) method~\cite{Chow1998}.  The Newton-Schulz method arises 
from applying the Newton-Raphson method to solve the equation $X^{-1}=A$, which gives
\begin{equation}\label{eq:Newton-Schulz}
X_{k+1} = 2X_{k} -X_{k} A X_{k}.
\end{equation}
The MR method was designed to calculate  approximate inverses, and it does so by minimizing the norm of the residual along the preconditioned residual direction:
\begin{equation} \label{eq:MRdef}
\norm{I-AX_{k+1}}_F^2 = \min_{\alpha \in \R} \left\{\norm{I-AX}_F^2 \quad \mbox{subject to} \quad X = X_k +\alpha X_k(I-AX_k)\right\}.
\end{equation}
See~\cite[chapter 10.5]{Saad2003} for a didactic introduction to MR methods. Letting  $R_k = I-AX_k$,
the resulting iterates of the MR method are given by
\begin{equation} \label{eq:MRiter}
 X_{k+1} =X_k +\frac{\Tr{R_k^TAX_kR_k}}{\Tr{(AX_kR_k)^TAX_kR_k}}X_kR_k.
\end{equation}


We perform two sets of tests. On the first set, we choose a different starting matrix for each method which is optimized, in some sense, for that method. We then compare the empirical convergence of each method, including the time taken to calculate $X_0$. In particular, 
  the  Newton-Schulz is only guaranteed to converge for an initial matrix $X_0$ such that $\rho(I-X_0A)< 1$. Indeed, the Newton-Schulz method did not converge in most of our experiments when $X_0$ was not carefully chosen according to this criteria.
  To remedy this, we choose $X_0 = 0.99 \cdot A^T/\rho^2(A)$ for the Newton-Schulz method, so that $\rho(I-X_0A)<1$ is satisfied. To compute $\rho(A)$ we used the inbuilt MATLAB function \texttt{normest} which is coded in C++. For MR we followed the suggestion in~\cite{Saad2003} and used the projected identity for the initial matrix $X_0 = (\Tr{A}/\Tr{AA^T}) \cdot I.$ For our AdaRBFGS methods we simply used $X_0 =I$, as this worked well in practice.
  
  In the second set of tests, which we relegate to the Appendix,  we compare the empirical convergence of the methods starting from the same matrix, namely the identity matrix $X_0 = I$.

We run each method until the relative error $\norm{I-AX_{k}}_{F}/\norm{I-AX_0}_{F}$ is below $10^{-2}.$  All experiments were performed  and run in MATLAB R2014b. To appraise the performance of each method we plot the relative error against time taken and against the number of floating point operations (\emph{flops}).


\subsection{Experiment 1: synthetic matrices}
First we compare the four methods on synthetic matrices generated using the matrix function {\tt rand}. To appraise the difference in performance of the methods as the dimension of the problem grows, we tested for $n=1000$, $2000$ and $5000.$ As the dimension grows, only the two variants of the AdaRBFGS method are able to reach the $10^{-2}$ desired tolerance in a reasonable amount time and number of flops (see Figure~\ref{fig:pdsynth}).

\begin{figure}
    \centering
    \begin{subfigure}[t]{0.46\textwidth}
        \centering
\includegraphics[width =  \textwidth, trim= 50 310 60 310, clip ]{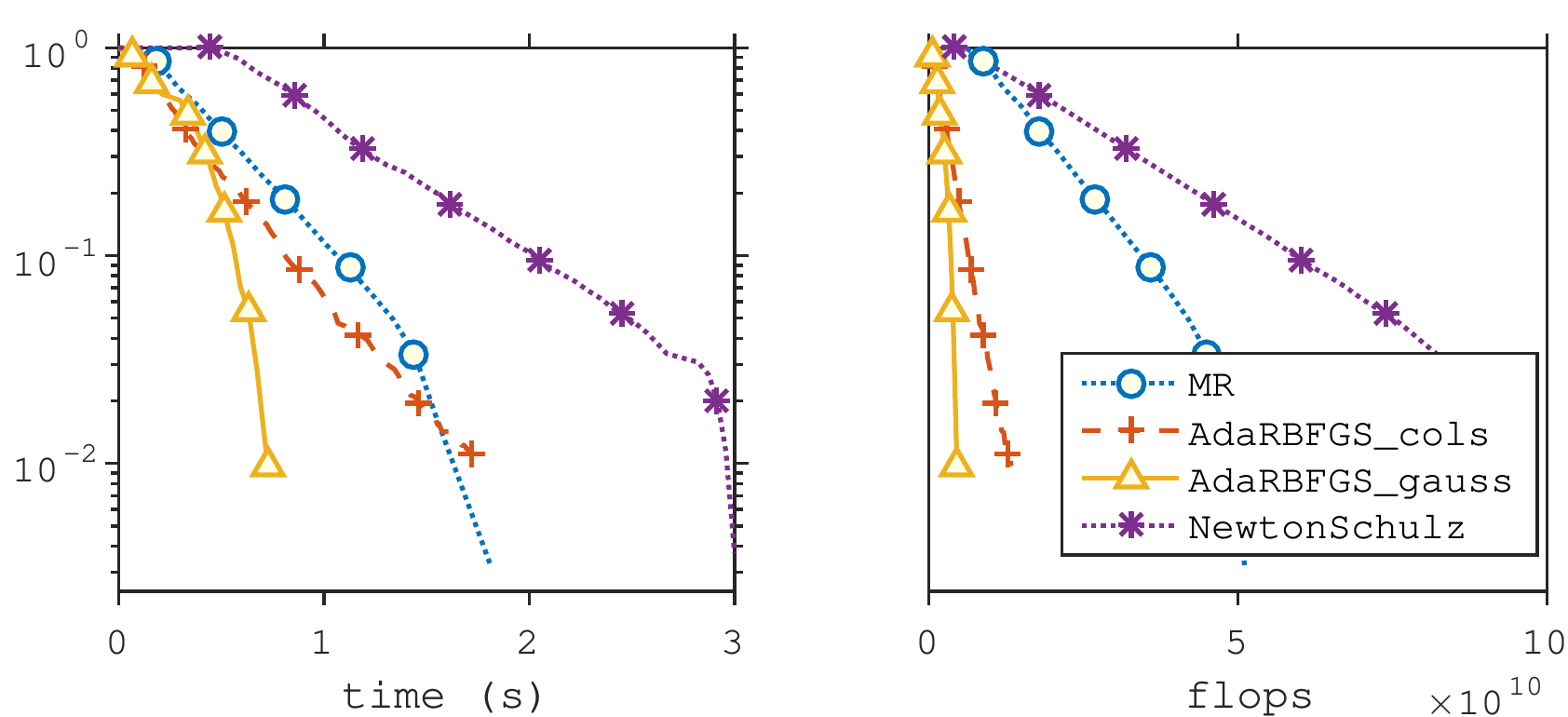}
        \caption{\texttt{rand} with $n=10^4$ }\label{fig:rand}
    \end{subfigure}%
     \hspace{0.05\textwidth}
         \begin{subfigure}[t]{0.46\textwidth}
        \centering
\includegraphics[width =  \textwidth, trim= 50 310 60 310, clip ]{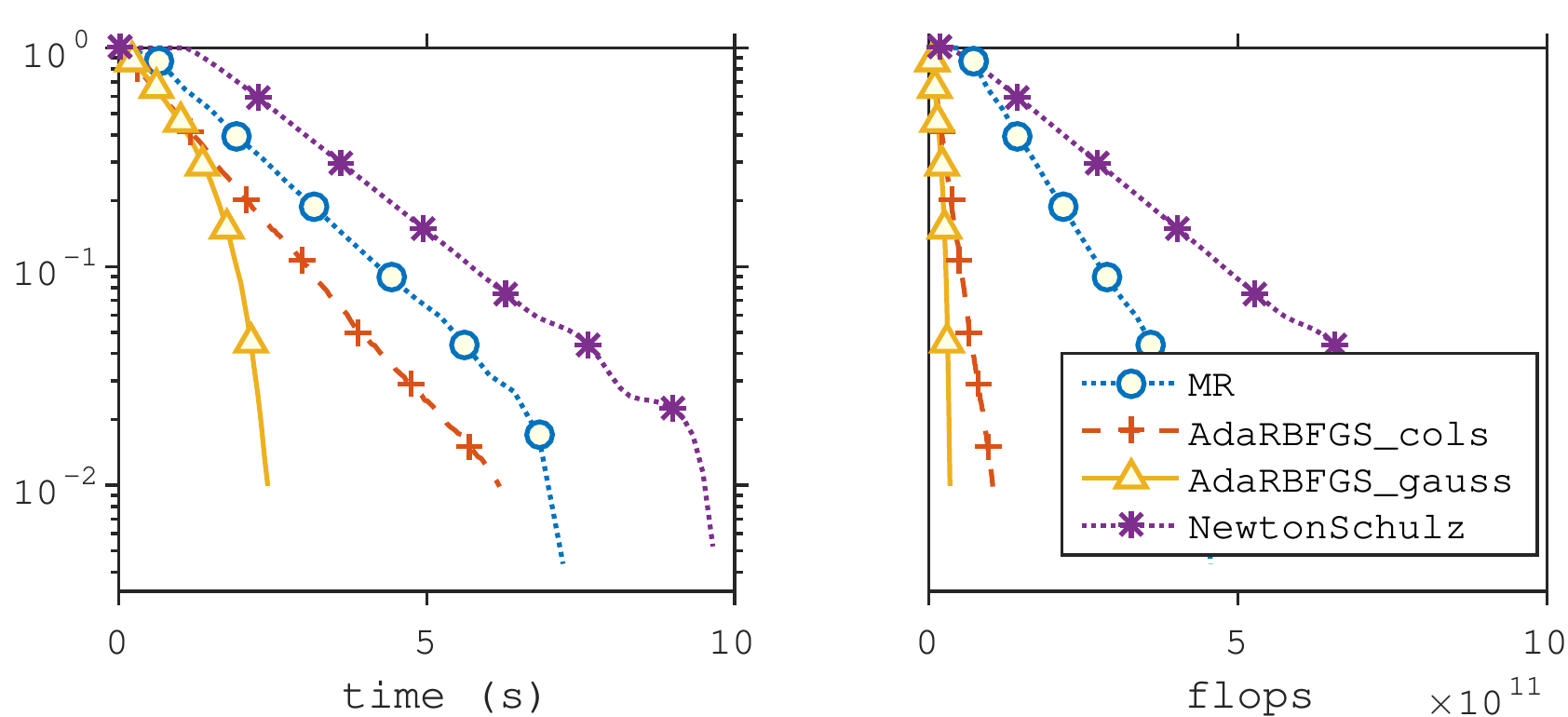}
        \caption{\texttt{rand} with $n=2 \cdot 10^4$ }\label{fig:rand2}
    \end{subfigure}%
     \hspace{0.01\textwidth}
         \begin{subfigure}[t]{0.46\textwidth}
        \centering
\includegraphics[width =  \textwidth, trim= 50 310 60 310, clip ]{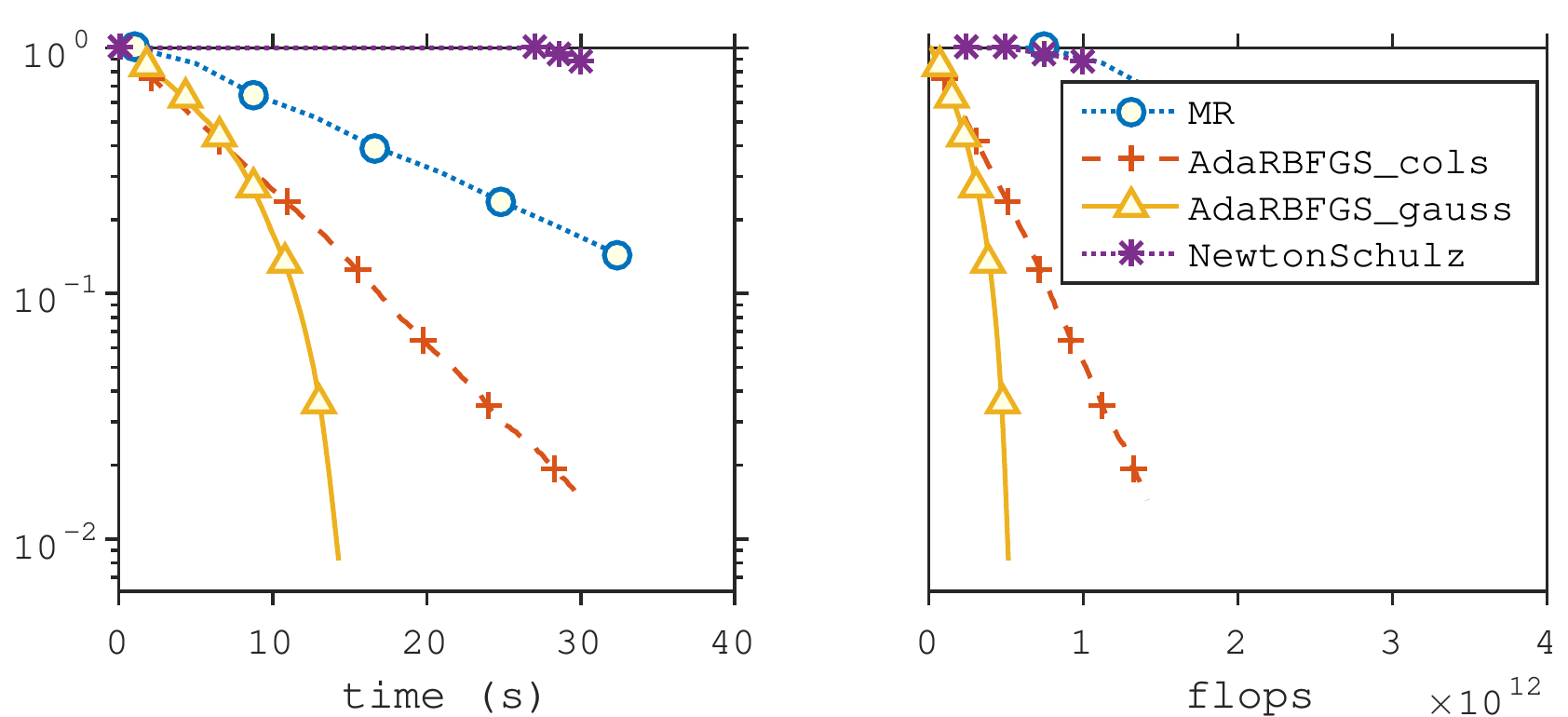}
        \caption{\texttt{rand} with $n=5 \cdot 10^4$ }\label{fig:rand3}
    \end{subfigure}%
    \caption{\footnotesize  Synthetic problems with a uniform random matrix $A~=~\bar{A}^T\bar{A}$, where $\bar{A}=$\texttt{rand}$(n).$ 
}\label{fig:pdsynth}
\end{figure}


\subsection{Experiment 2: LIBSVM matrices} 

Next we invert the Hessian matrix $\nabla^2 f(x)$ of four ridge-regression problems of the form
 \begin{equation}\label{eq:ridgeMatrix}
\min_{x\in \R^n}f(x)\eqdef \frac{1}{2}\norm{Ax-b}_2^2 + \frac{\lambda}{2} \norm{x}_2^2,\quad  \quad\nabla^2 f(x) = A^TA+\lambda I,
\end{equation}
using data from LIBSVM~\cite{Chang2011}, see Figure~\ref{fig:LIBSVM}. We use $\lambda =1$ as the regularization parameter. On the two problems of smaller dimension, \texttt{aloi} and \texttt{protein}, the four methods have a similar performance, and encounter the inverse in a few seconds. On the two larger problems, \texttt{gisette-scale} and \texttt{real-sim}, the two variants of AdaRBFGS significantly outperform the MR and the Newton-Schulz method.
\begin{figure}
    \centering
    \begin{subfigure}[t]{0.46\textwidth}
        \centering
\includegraphics[width =  \textwidth, trim= 50 310 60 310, clip ]{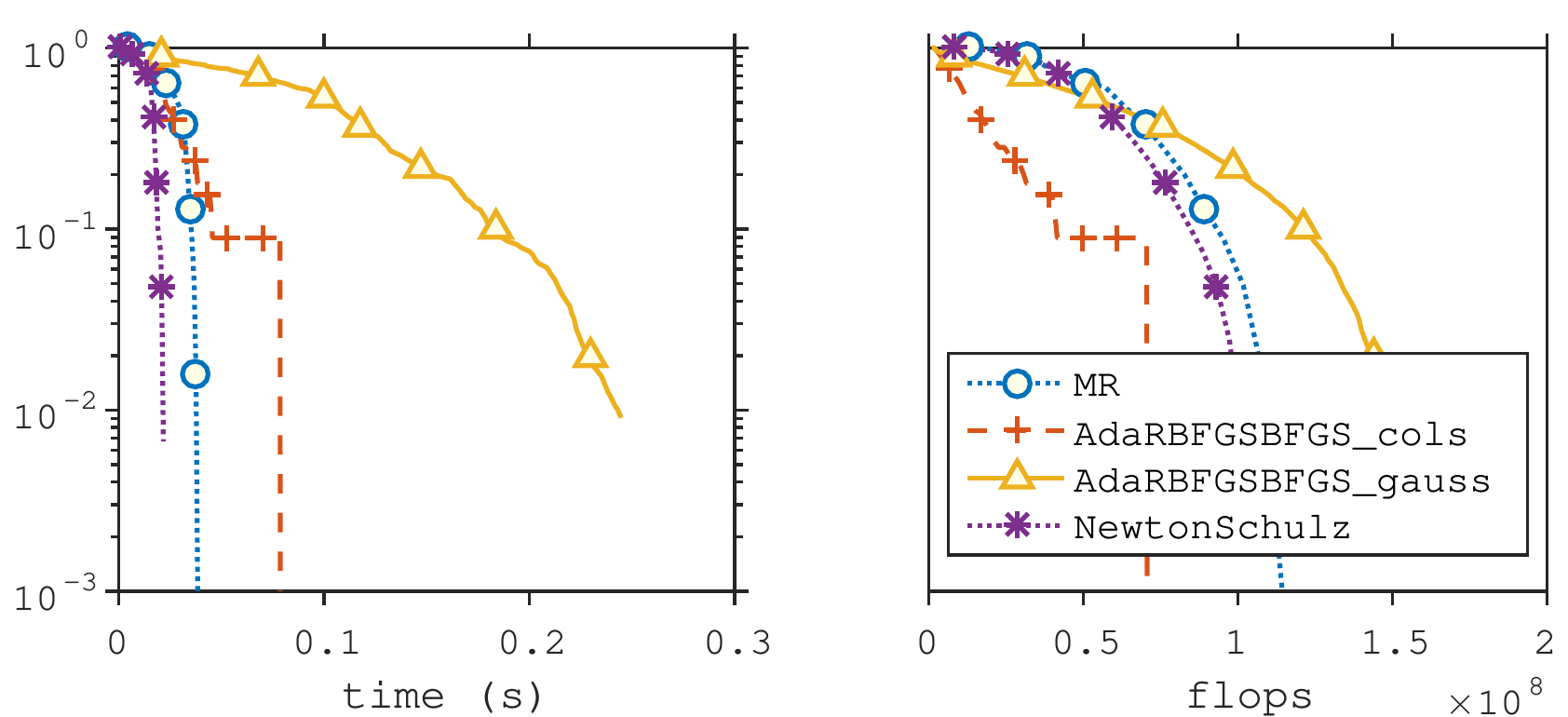}
        \caption{\texttt{aloi}}
    \end{subfigure}%
  \hspace{0.05\textwidth}
    \begin{subfigure}[t]{0.46\textwidth}
        \centering
\includegraphics[width =  \textwidth, trim= 50 310 60 310, clip ]{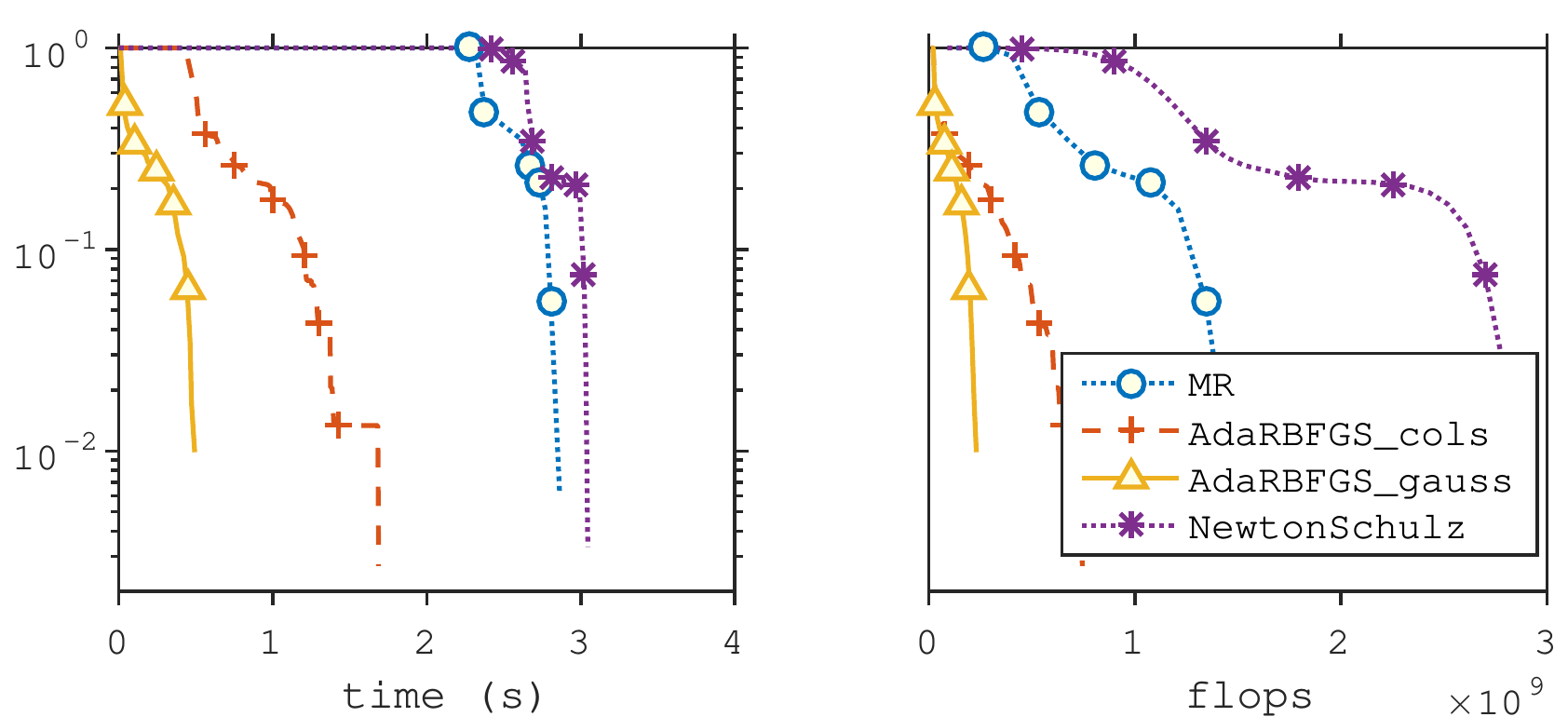}
        \caption{\texttt{protein}}
    \end{subfigure}
        \begin{subfigure}[t]{0.46\textwidth}
        \centering
\includegraphics[width =  \textwidth, trim=50 310 60 310, clip ]{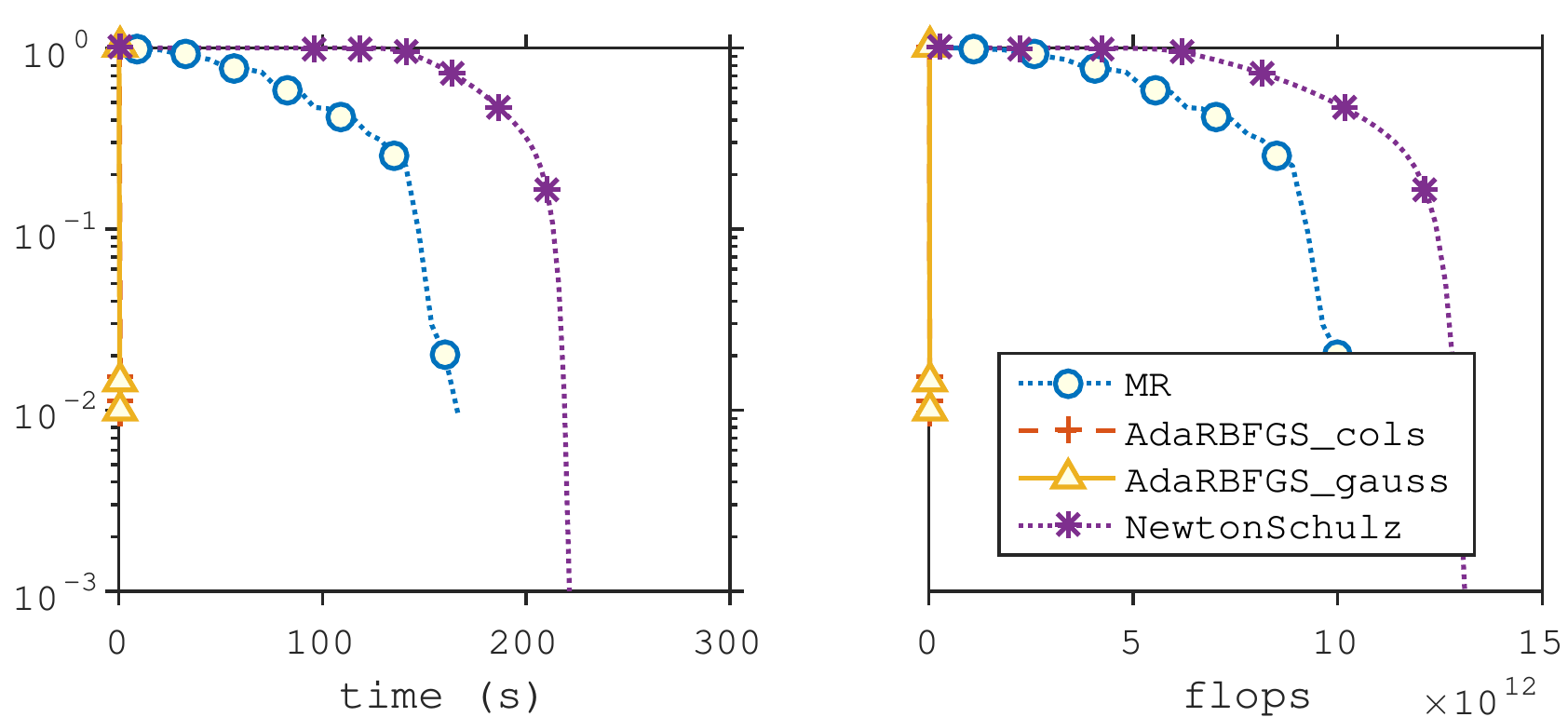}
        \caption{\texttt{gisette\_scale}}
    \end{subfigure}%
  \hspace{0.05\textwidth}
    \begin{subfigure}[t]{0.46\textwidth}
        \centering
\includegraphics[width =  \textwidth, trim= 50 310 60 310, clip ]{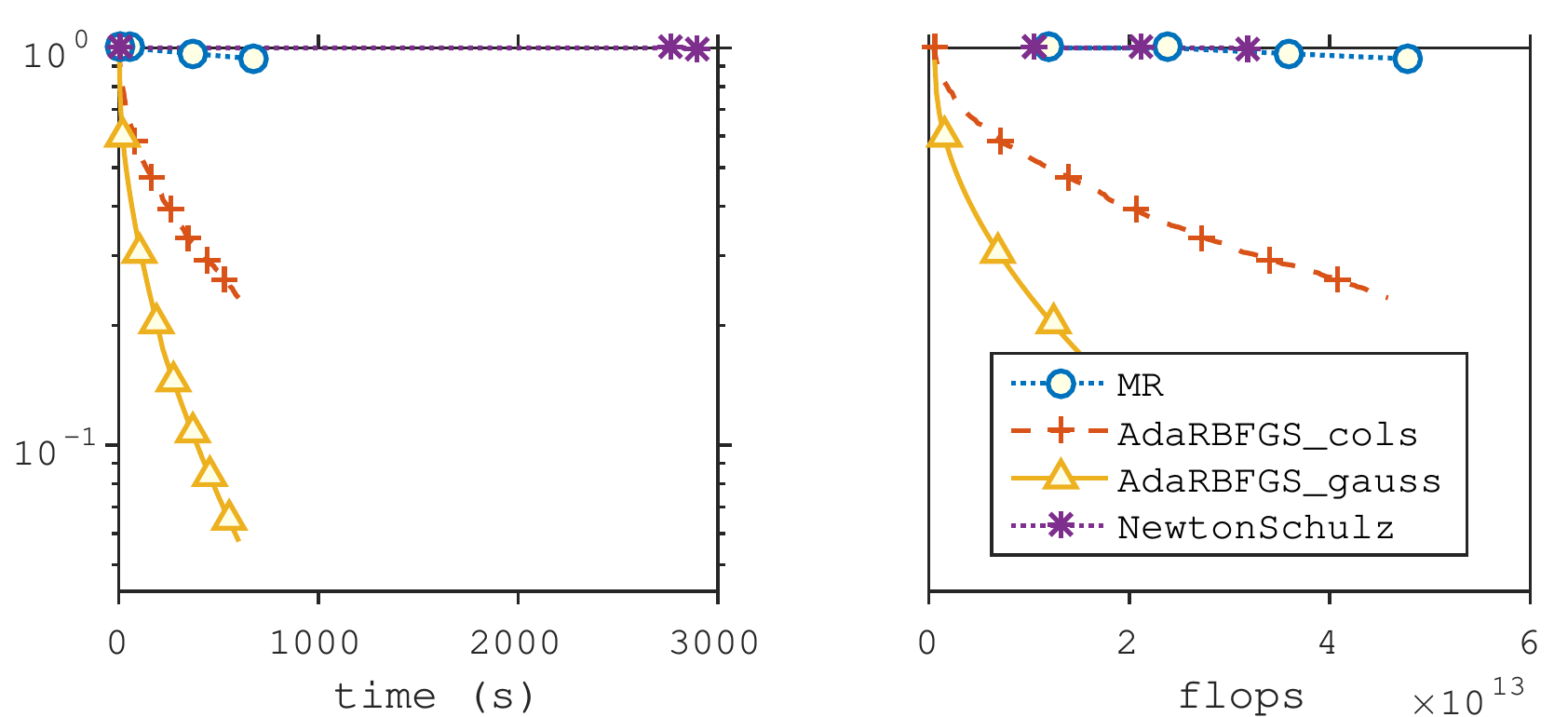}
        \caption{\texttt{real\_sim}}
    \end{subfigure}
    \caption{\footnotesize The performance of Newton-Schulz, MR, AdaRBFGS\_gauss and AdaRBFGS\_cols methods on the Hessian matrix of four LIBSVM test problems: (a) \texttt{aloi}: $(m;n)=(108,000;128)$ (b) \texttt{protein}: $(m; n)=(17,766; 357)$ (c)  \texttt{gisette\_scale}: $(m;n)=(6000; 5000)$ (d) \texttt{real-sim}: $(m;n)=(72,309; 20,958)$.} \label{fig:LIBSVM}
\end{figure}

\subsection{Experiment 3: UF sparse matrices}

For our final batch of tests, we invert several sparse matrices from the Florida sparse matrix collection~\cite{Davis:2011}. We have selected six problems from six different applications, so that the set of matrices display a varied sparsity pattern and  structure, see Figure~\ref{fig:UF}.

On the matrix \texttt{Bates/Chem97ZtZ} of moderate size, the four methods perform well, with the Newton-Schulz method converging first in time and AdaRBFGS\_cols first in flops. On the matrices of larger dimensions, the two variants of AdaRBFGS converge often orders of magnitude faster.  The significant difference between the performance of the methods on large scale problems can be, in part, explained by their iteration cost. The iterates of the Newton-Schulz and MR method compute $n\times n$ matrix-matrix products. While the cost of an iteration of the AdaRBFGS methods is dominated by the cost of a $n\times n$ matrix by $n\times q$ matrix product. As a result, and because we set $q = \sqrt{n},$ this is difference of $n^3$ to $n^{2+1/2}$ in iteration cost, which clearly shows on the larger dimensional instances. On the other hand, both the Newton-Schulz and MR method are quadratically locally convergent, thus when the iterates are close to the solution, these methods enjoy a notable speed-up.

\begin{figure}
    \centering
\begin{subfigure}[t]{0.46\textwidth}
        \centering   
\includegraphics[width =  \textwidth, trim=50 310 60 310, clip ]{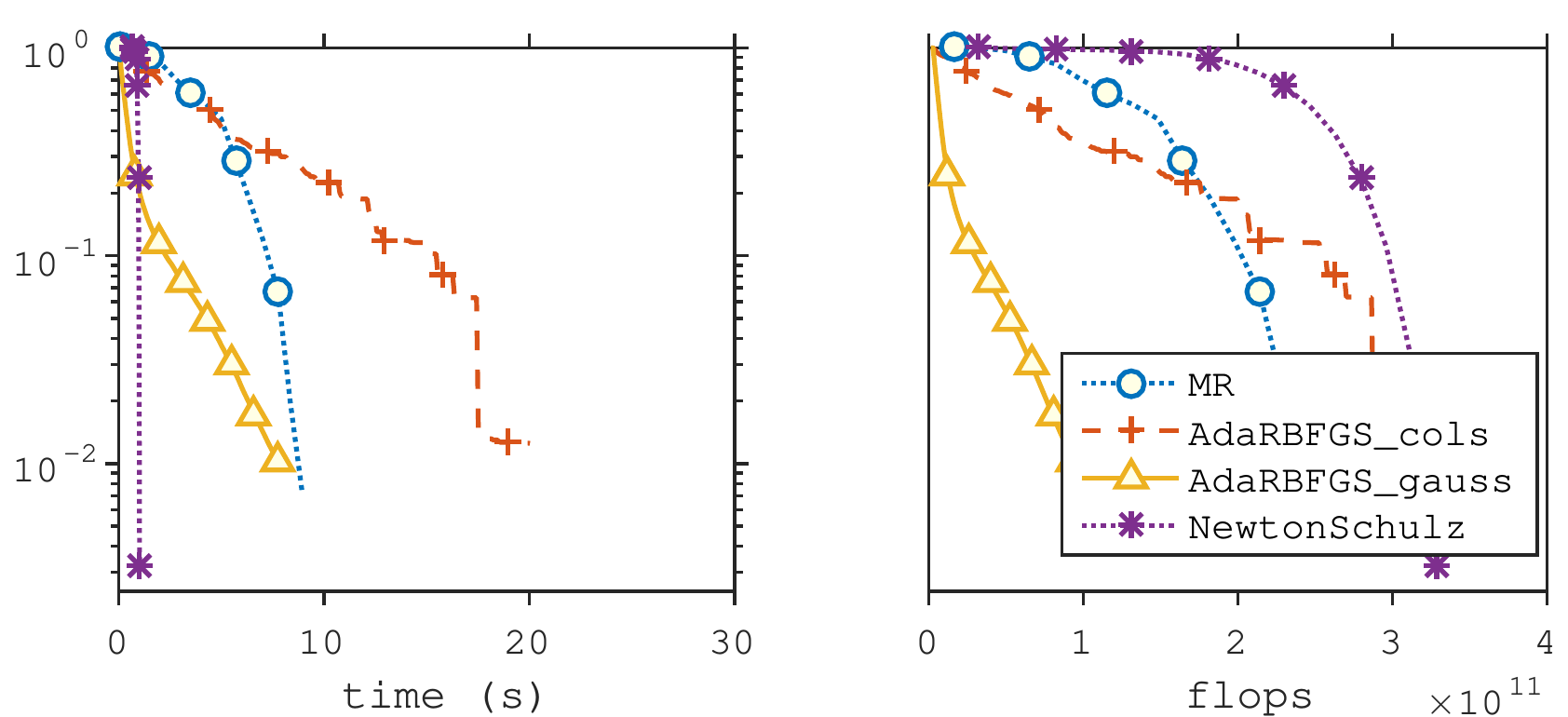}
        \caption{\texttt{Bates/Chem97ZtZ}}
\end{subfigure} \hspace{0.01\textwidth} 
\begin{subfigure}[t]{0.46\textwidth}
        \centering
\includegraphics[width =  \textwidth, trim= 50 310 60 310, clip ]{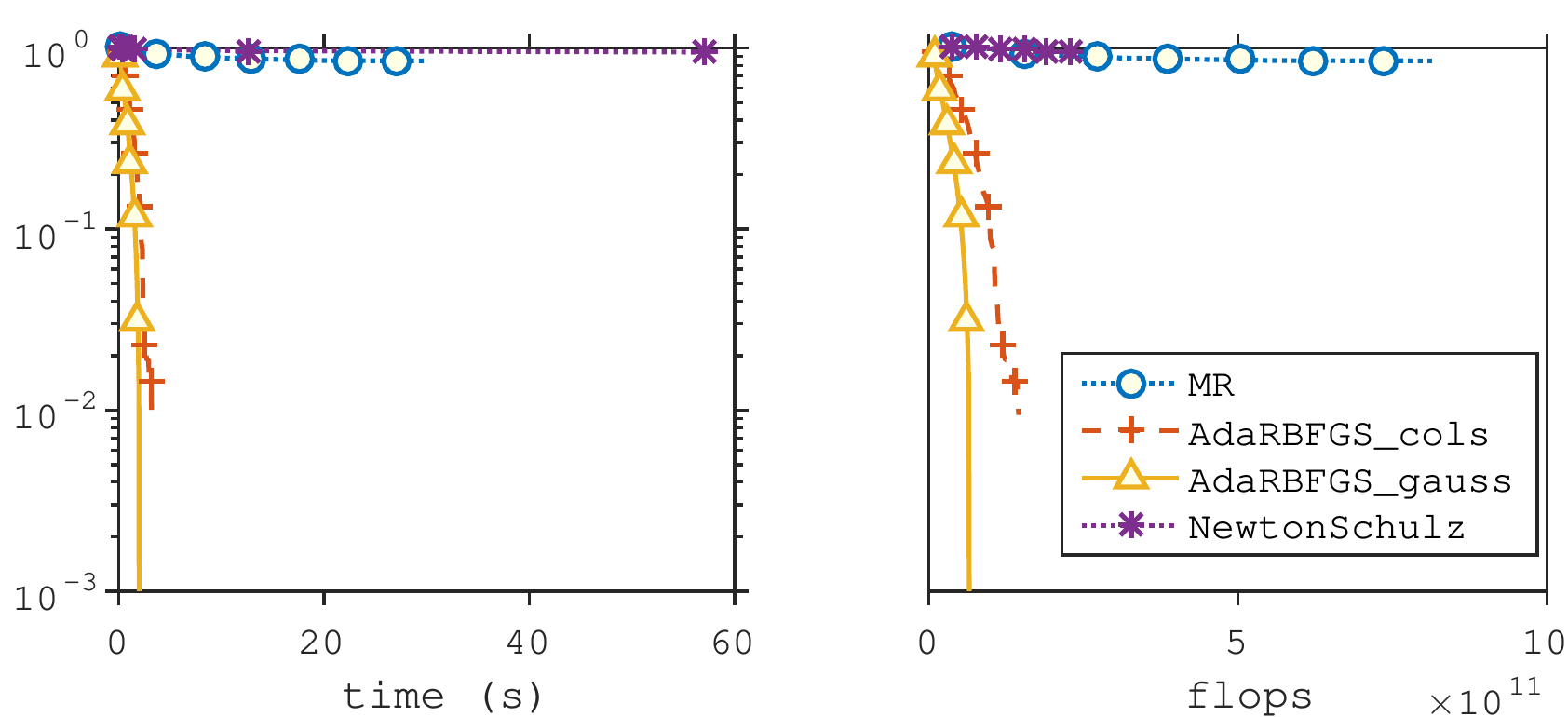}
        \caption{\texttt{FIDAP/ex9}}
\end{subfigure} \hspace{0.01\textwidth}  %
\begin{subfigure}[t]{0.46\textwidth}
        \centering
\includegraphics[width =  \textwidth, trim=50 310 60 310, clip ]{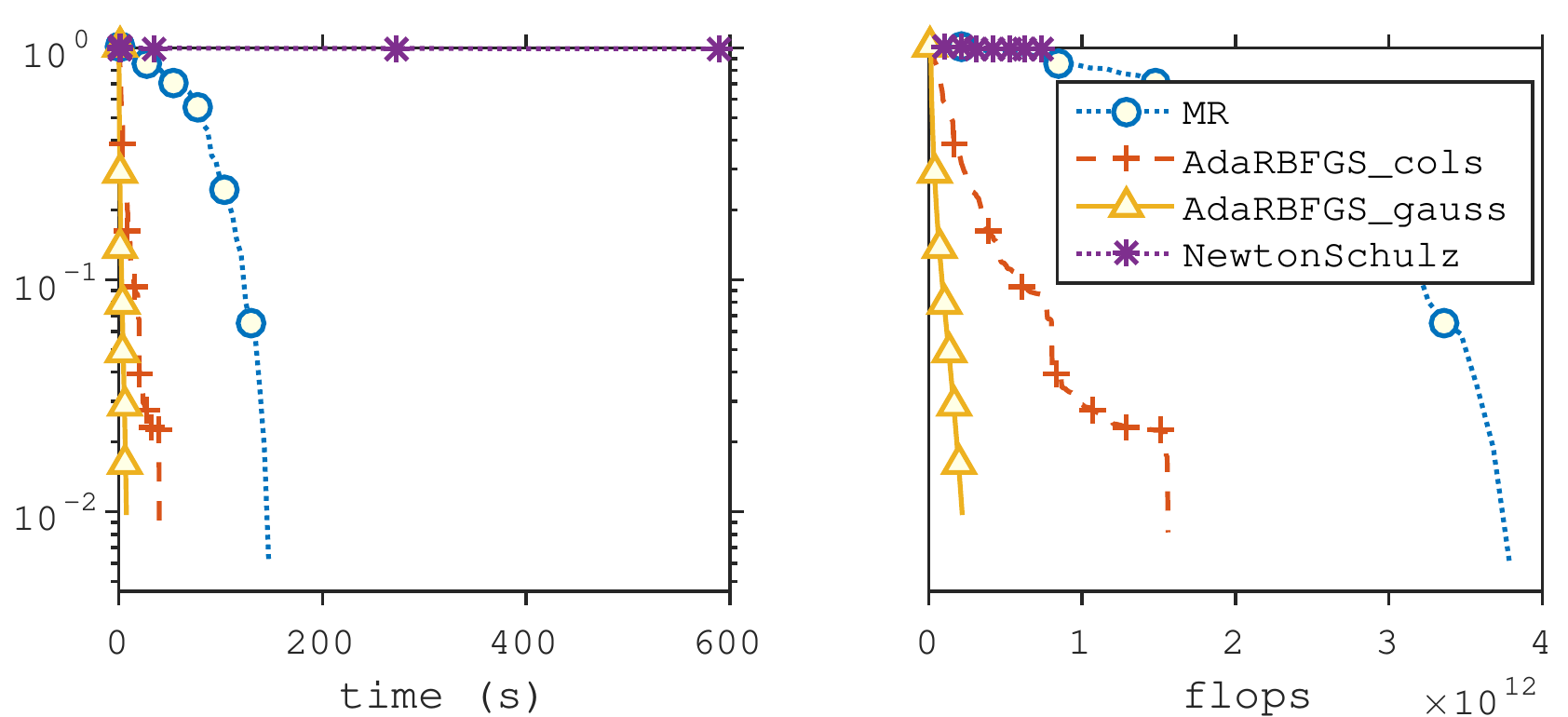}
        \caption{\texttt{Nasa/nasa4704}}
\end{subfigure}  \hspace{0.01\textwidth} 
\begin{subfigure}[t]{0.46\textwidth}
        \centering
\includegraphics[width =  \textwidth, trim=50 310 60 310, clip ]{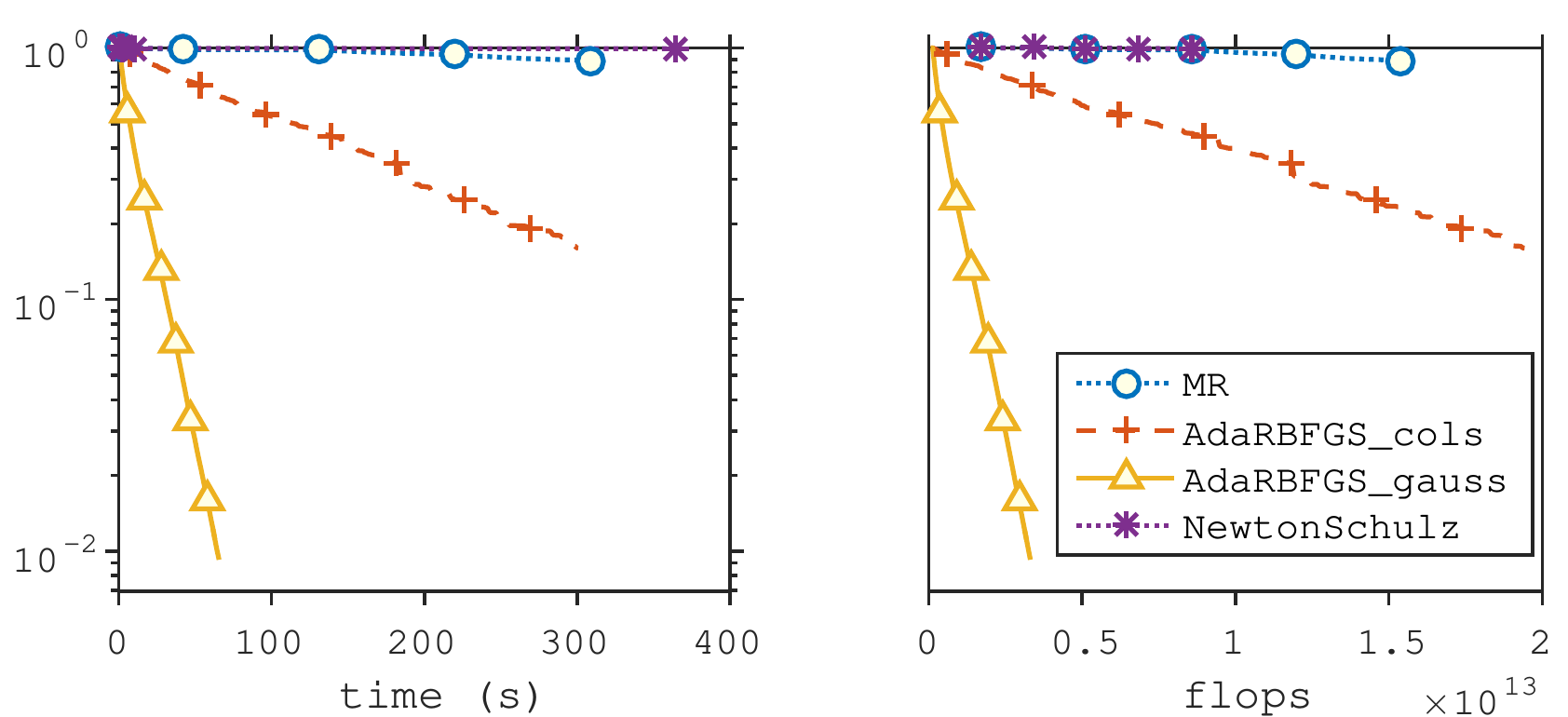}
        \caption{\texttt{HB/bcsstk18}}
\end{subfigure}\hspace{0.01\textwidth}  
\begin{subfigure}[t]{0.46\textwidth}
        \centering
\includegraphics[width =  \textwidth, trim=50 310 60 310, clip ]{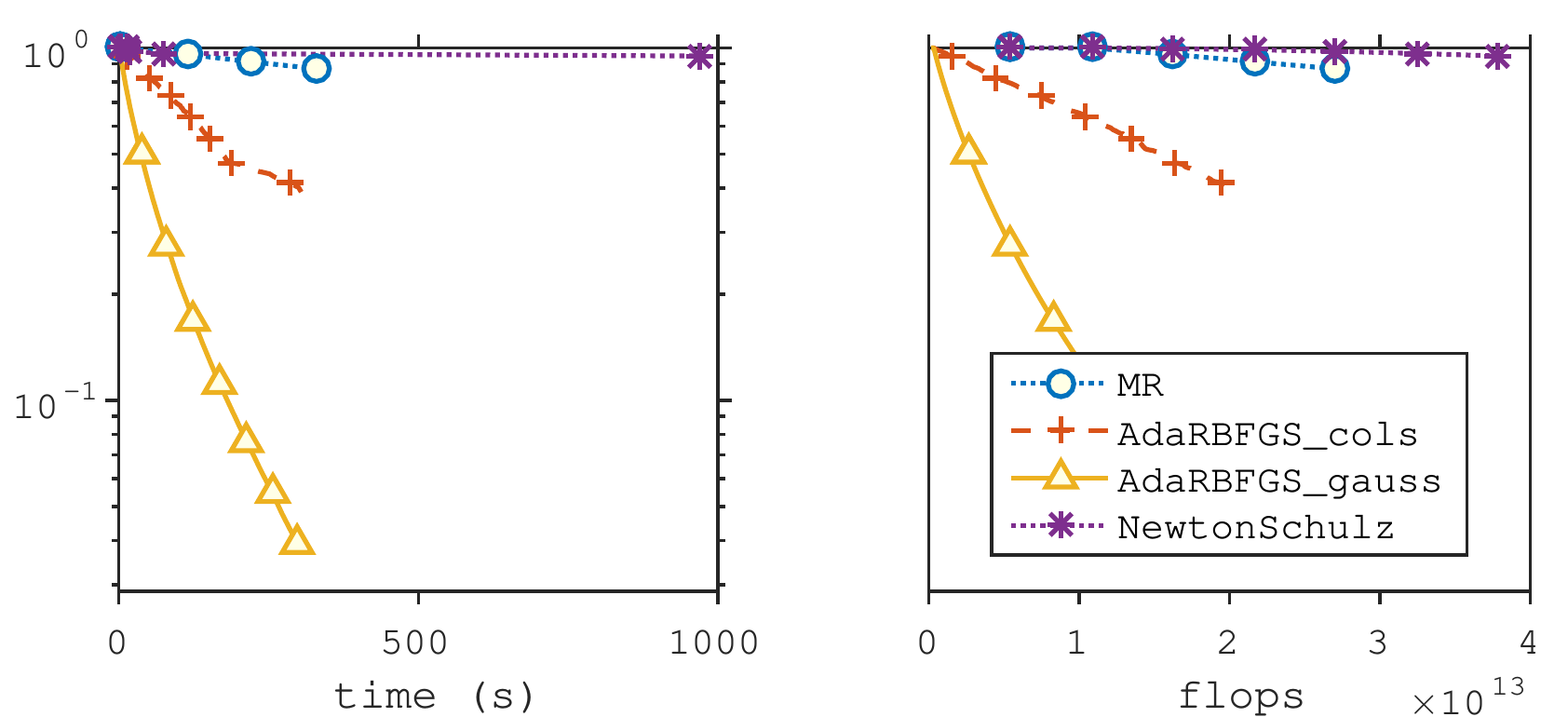}
        \caption{\texttt{Pothen/bodyy4}}
\end{subfigure}  \hspace{0.01\textwidth} 
\begin{subfigure}[t]{0.46\textwidth}
        \centering
\includegraphics[width =  \textwidth, trim=50 310 60 310, clip ]{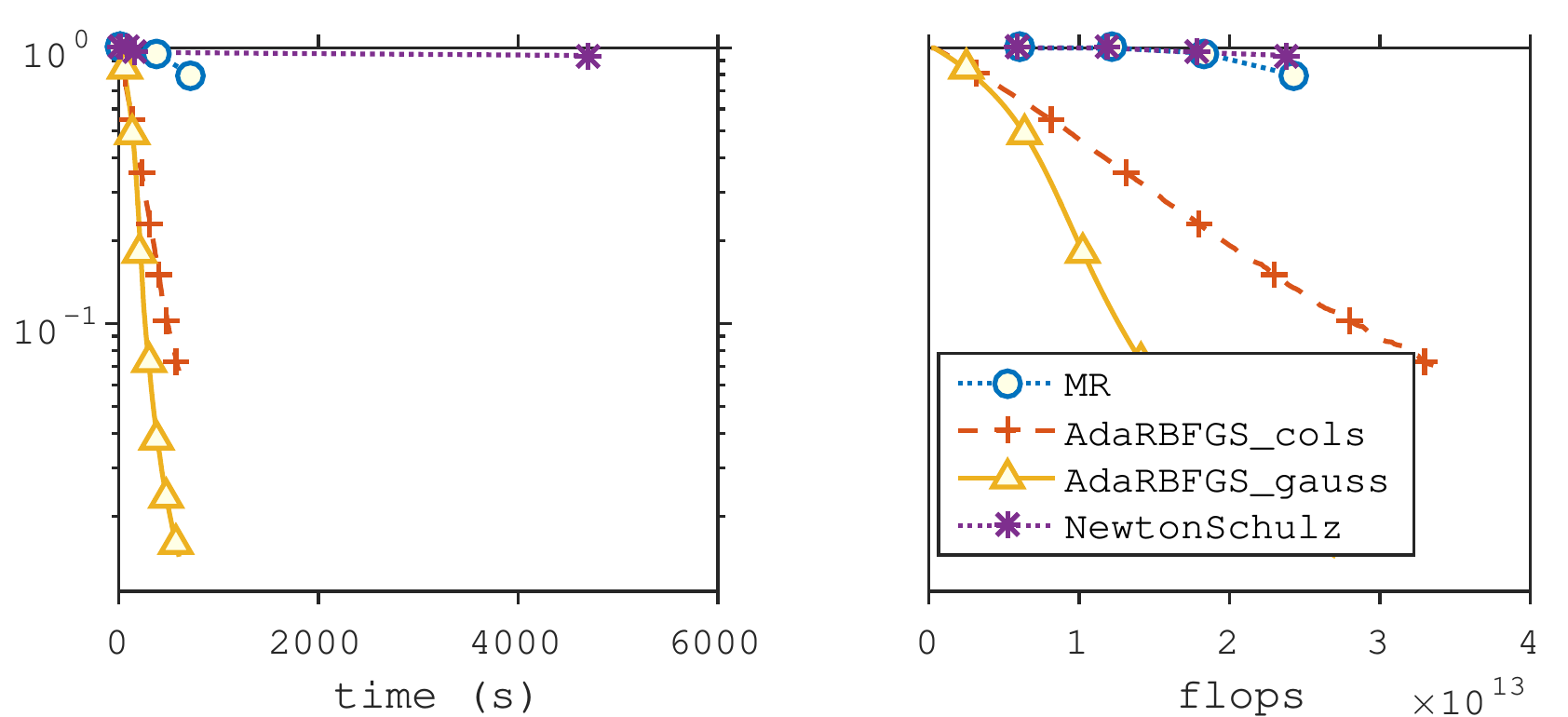}
        \caption{\texttt{ND/nd6k}}
\end{subfigure}\hspace{0.01\textwidth} 
\begin{subfigure}[t]{0.46\textwidth}
        \centering
\includegraphics[width =  \textwidth, trim=50 310 60 310, clip ]{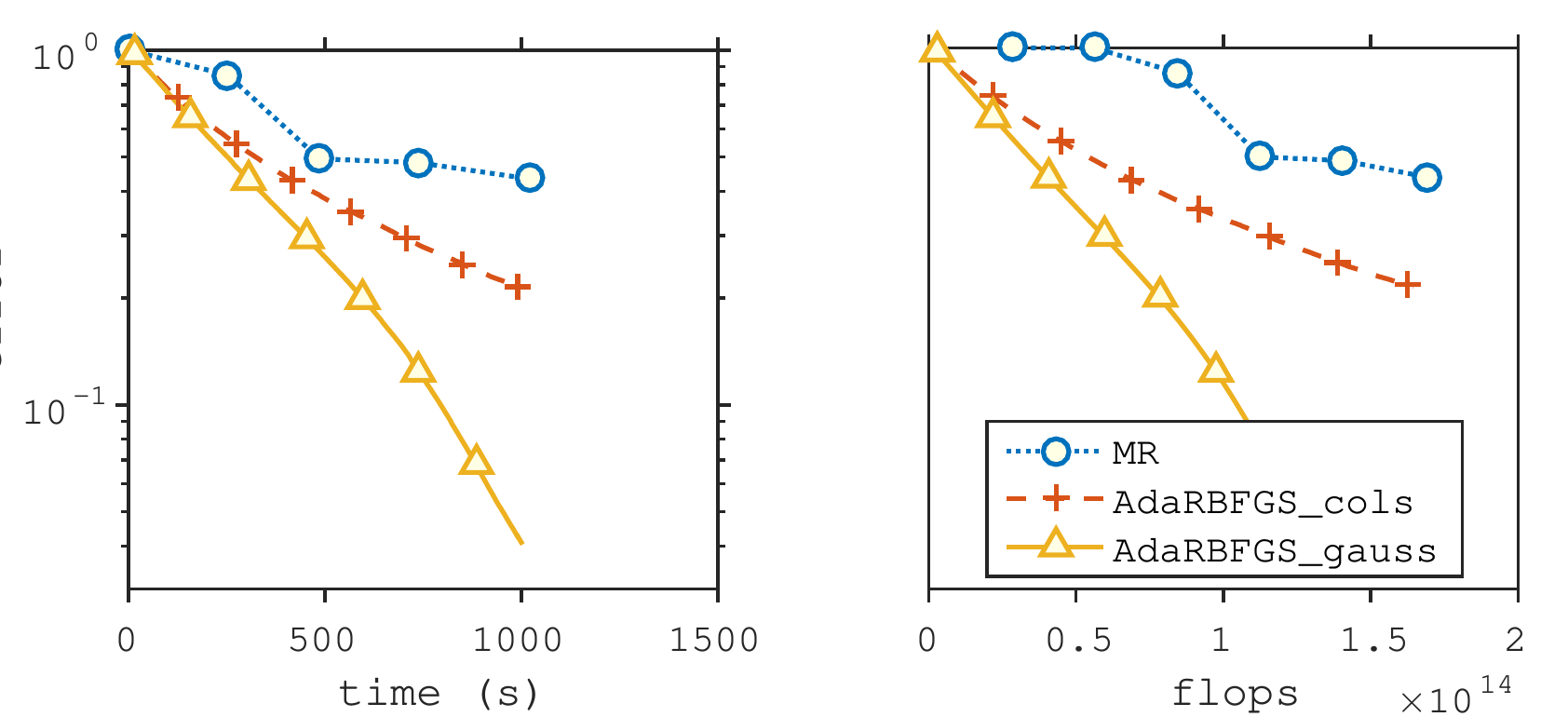}
        \caption{\texttt{GHS\_psdef/wathen100}}
\end{subfigure}%
  \caption{\footnotesize  The performance of Newton-Schulz, MR, AdaRBFGS\_gauss and AdaRBFGS\_cols on 
(a) \texttt{Bates-Chem97ZtZ}: $n= 2,541$,   
  (b) \texttt{FIDAP/ex9}: $n = 3,363 $, (c) \texttt{Nasa/nasa4704}: $n= 4,704$, 
  (d) \texttt{HB/bcsstk18}: $n=11,948$,   (e) \texttt{Pothen/bodyy4}: $n =17,546$ (f) \texttt{ND/nd6k}: $n=18,000$ (g) \texttt{GHS\_psdef/wathen100}: $n = 30, 401$.  } \label{fig:UF}
  \end{figure}

\section{Conclusion}

%
%
%
%
%

We developed a family of stochastic methods for iteratively inverting matrices, with  specialized variants for nonsymmetric, symmetric and positive definite matrices. The methods have  dual viewpoints: a sketch-and-project viewpoint (which is an extension of the least-change formulation of the qN methods),   and a constrain-and-approximate viewpoint (which is related to the approximate inverse preconditioning (API) methods). The equivalence between these  viewpoints reveals a deep connection between the qN and the API methods, which were previously considered to be unrelated. 
   
     Under mild conditions, we establish  convergence of the expected norm of the error, and the norm of the expected error. Our convergence theorems are general enough to accommodate discrete samplings and continuous  samplings, though we only explore discrete samplings here in more detail.  For discrete samplings, we determine a probability distribution for which the convergence rates are equal to a scaled condition number, and thus are easily interpretable. Furthermore, for discrete sampling, we determining a practical optimized sampling distribution, that is obtained by minimizing an upper bound on the convergence rate. We develop new randomized block variants of the qN updates, including the BFGS update, complete with convergence rates, and provide new insights into these methods using our dual viewpoint. 
           
            For positive definite matrices, we develop an Adaptive Randomized BFGS method (AdaRBFGS), which in large-scale numerical experiments can be orders of magnitude faster (in time and flops) than the self-conditioned minimal residual method and the Newton-Schulz method. In particular, only the AdaRBFGS methods are able to approximately invert  
the $20,958 \times 20,958$ ridge regression matrix based on the \texttt{real-sim} data set in reasonable time and flops. 

   This paper opens up many avenues for future work, such as developing methods that use continuous random sampling and implementing a limited memory approach akin to the LBFGS~\cite{Nocedal1980} method, which could maintain an operator that serves as an approximation to the inverse. The same method can also be used to calculating the pseudo-inverse.  As recently shown in~\cite{Gower2015c}, an analogous method can be applied to linear systems, converging with virtually no assumptions on the system matrix. 



{ 
\printbibliography
}

\section{Appendix: Optimizing the Convergence Rate}
  
\begin{lemma}\label{lem:fracsum} Let $a_1,\dots,a_r$ be positive real numbers. Then
\[\left[ \frac{\sqrt{a_1}}{\sum_{i=1}^r \sqrt{a_i}},\ldots, \frac{\sqrt{a_n}}{\sum_{i=1}^r \sqrt{a_i}} \right] =\arg\min_{p\in \Delta_r} \sum_{i=1}^r \frac{a_i}{p_i}.\]
\end{lemma}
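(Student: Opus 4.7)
The plan is to apply the Cauchy--Schwarz inequality in the form
\[
\left(\sum_{i=1}^r x_i y_i\right)^2 \;\le\; \left(\sum_{i=1}^r x_i^2\right)\left(\sum_{i=1}^r y_i^2\right),
\]
with the clever choice $x_i = \sqrt{p_i}$ and $y_i = \sqrt{a_i/p_i}$, which is legitimate since every $p_i>0$ at any candidate minimizer (if some $p_i = 0$, the objective is $+\infty$, so such points can be excluded up front). This yields
\[
\left(\sum_{i=1}^r \sqrt{a_i}\right)^2 \;\le\; \left(\sum_{i=1}^r p_i\right)\left(\sum_{i=1}^r \frac{a_i}{p_i}\right) \;=\; \sum_{i=1}^r \frac{a_i}{p_i},
\]
where in the last step I use the simplex constraint $\sum_i p_i = 1$. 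This gives a universal lower bound on the objective, independent of $p$.

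Next, I would verify that the proposed minimizer $p_i^\star = \sqrt{a_i}/\sum_j \sqrt{a_j}$ attains this bound. Plugging in,
\[
\sum_{i=1}^r \frac{a_i}{p_i^\star} \;=\; \sum_{i=1}^r \frac{a_i \sum_j \sqrt{a_j}}{\sqrt{a_i}} \;=\; \left(\sum_{i=1}^r \sqrt{a_i}\right)\left(\sum_{i=1}^r \sqrt{a_i}\right) \;=\; \left(\sum_{i=1}^r \sqrt{a_i}\right)^2,
\]
matching the lower bound. Moreover $p^\star \in \Delta_r$ since the entries are positive and sum to $1$. Hence $p^\star$ is optimal.

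There is essentially no obstacle here; the only subtle point is the exclusion of boundary points of $\Delta_r$ with some $p_i=0$, which is immediate since the objective blows up there (all $a_i>0$). If a reader prefers an optimization-based derivation, one could alternatively form the Lagrangian $L(p,\mu)=\sum_i a_i/p_i - \mu(\sum_i p_i - 1)$, set $\partial L/\partial p_i = -a_i/p_i^2 - \mu = 0$ to obtain $p_i \propto \sqrt{a_i}$, and normalize; the Cauchy--Schwarz argument is cleaner and also supplies the closed-form optimal value, which is precisely what is needed in equation~\eqref{eq:discoptrate}. Uniqueness of the minimizer follows from the strict convexity of $p\mapsto \sum_i a_i/p_i$ on the relative interior of $\Delta_r$ (each summand is strictly convex in $p_i$ for $a_i>0$).
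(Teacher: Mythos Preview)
Your proof is correct. The paper's own proof proceeds via the Lagrangian route you mention as an aside: it differentiates $\sum_i a_i/p_i + \mu(\sum_i p_i - 1)$, obtains $p_i = \sqrt{a_i/\mu}$, and fixes $\mu$ by the normalization constraint. Your primary argument via Cauchy--Schwarz is a genuinely different and arguably cleaner approach: it produces a lower bound on the objective valid for \emph{every} $p\in\Delta_r$ with positive entries, and then exhibits a feasible point attaining it, so optimality is immediate without any appeal to first- or second-order conditions. By contrast, the paper's Lagrangian computation locates a stationary point but does not explicitly argue that it is a minimizer (this is harmless here because the objective is strictly convex, but the paper leaves that implicit). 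Your argument also delivers the closed-form optimal value $\bigl(\sum_i\sqrt{a_i}\bigr)^2$ directly, which is exactly what is used downstream in~\eqref{eq:discoptrate}, and it handles the boundary of $\Delta_r$ cleanly by noting the objective is $+\infty$ there. Both approaches are short; yours is self-contained, while the paper's is the standard KKT computation.
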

\begin{proof}
Incorporating the constraint $\sum_{i=1}^r p_i =1$ into the Lagrangian we have 
\[\min_{p\geq 0} \sum_{i=1}^r \frac{a_i}{p_i} + \mu\sum_{i=1}^r (p_i-1),\]
where $\mu \in \R.$ Differentiating in $p_i$ and setting to zero, then isolating $p_i$ gives
\begin{equation} \label{eq:muaipi}
 p_i = \sqrt{\frac{a_i}{\mu}}, \quad \mbox{for }i=1,\ldots r.\end{equation}
Summing over $i$ gives
\[ 1 = \sum_{i=1}^r \sqrt{\frac{a_i}{\mu}} \quad \Rightarrow \quad \mu = \left(\sum_{i=1}^r \sqrt{a_i}\right)^2.\]
Inserting this back into~\eqref{eq:muaipi} gives $p_i = \left.\sqrt{a_i}\right/\sum_{i=1}^r \sqrt{a_i}.$
\end{proof}

\section{Appendix: Numerical Experiments with the Same Starting Matrix}\label{app:numerics}
We now investigate the empirical convergence of the methods MR, AdaRBFGS\_cols and AdaRGFBS\_gauss when initiated with the same starting matrix $X_0 = I,$ see Figures~\ref{fig:LIBSVM2} and~\ref{fig:UF2}. We did not include the Newton-Schultz method in these figures because it diverged on all experiments when initiated from $X_0 =I.$
Again we observe that, as the dimension grows, only the two variants of the AdaRBFGS are capable of inverting the matrix to the desired $10^{-2}$ precision in a reasonable amount of time. Furthermore, the AdaRBFGS\_gauss variant had the overall best best performance.

\begin{figure}
    \centering
    \begin{subfigure}[t]{0.46\textwidth}
        \centering
\includegraphics[width =  \textwidth, trim= 30 270 40 280, clip ]{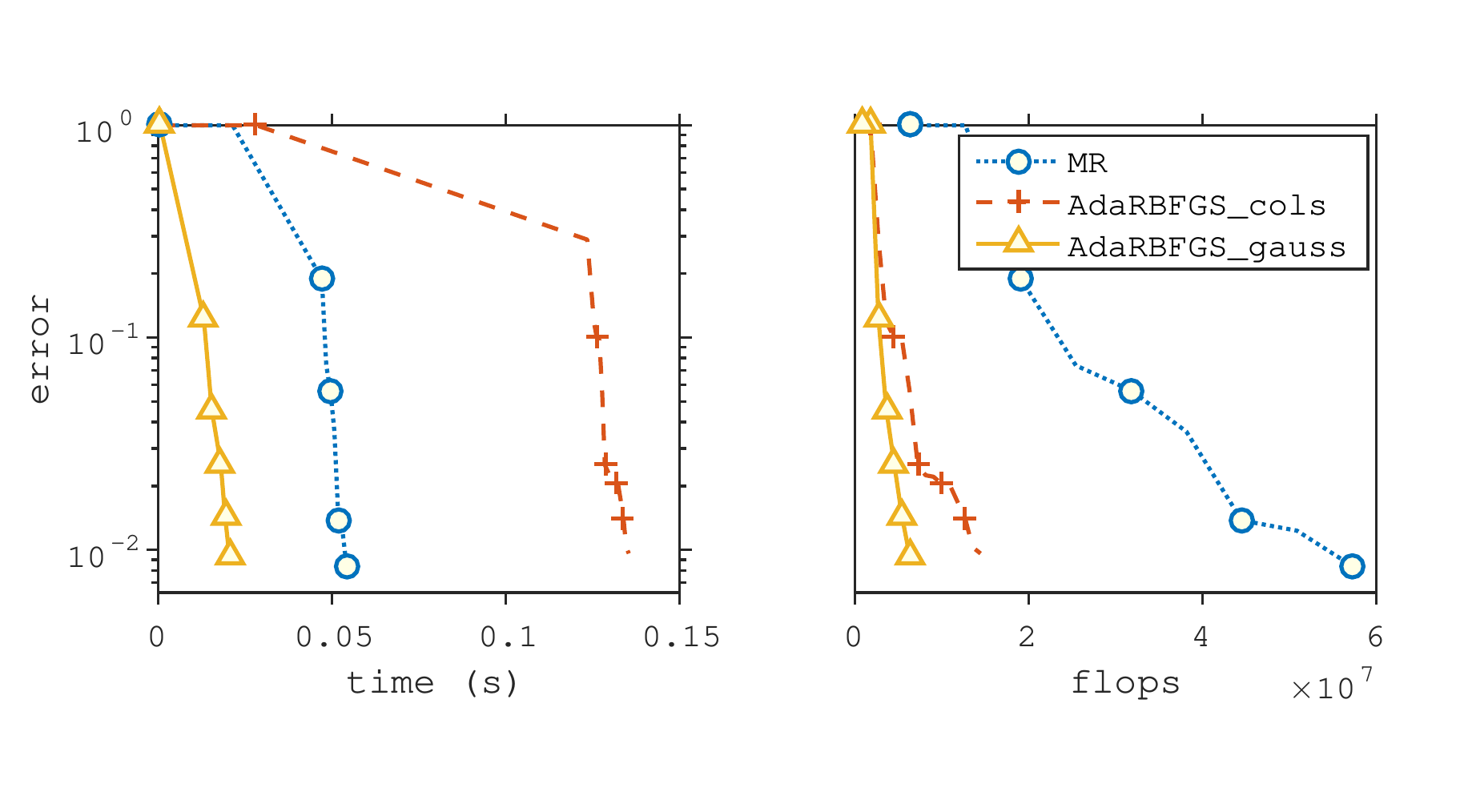}
        \caption{\texttt{aloi}}
    \end{subfigure}%
  \hspace{0.05\textwidth}
    \begin{subfigure}[t]{0.46\textwidth}
        \centering
\includegraphics[width =  \textwidth, trim=30 270 40 280, clip ]{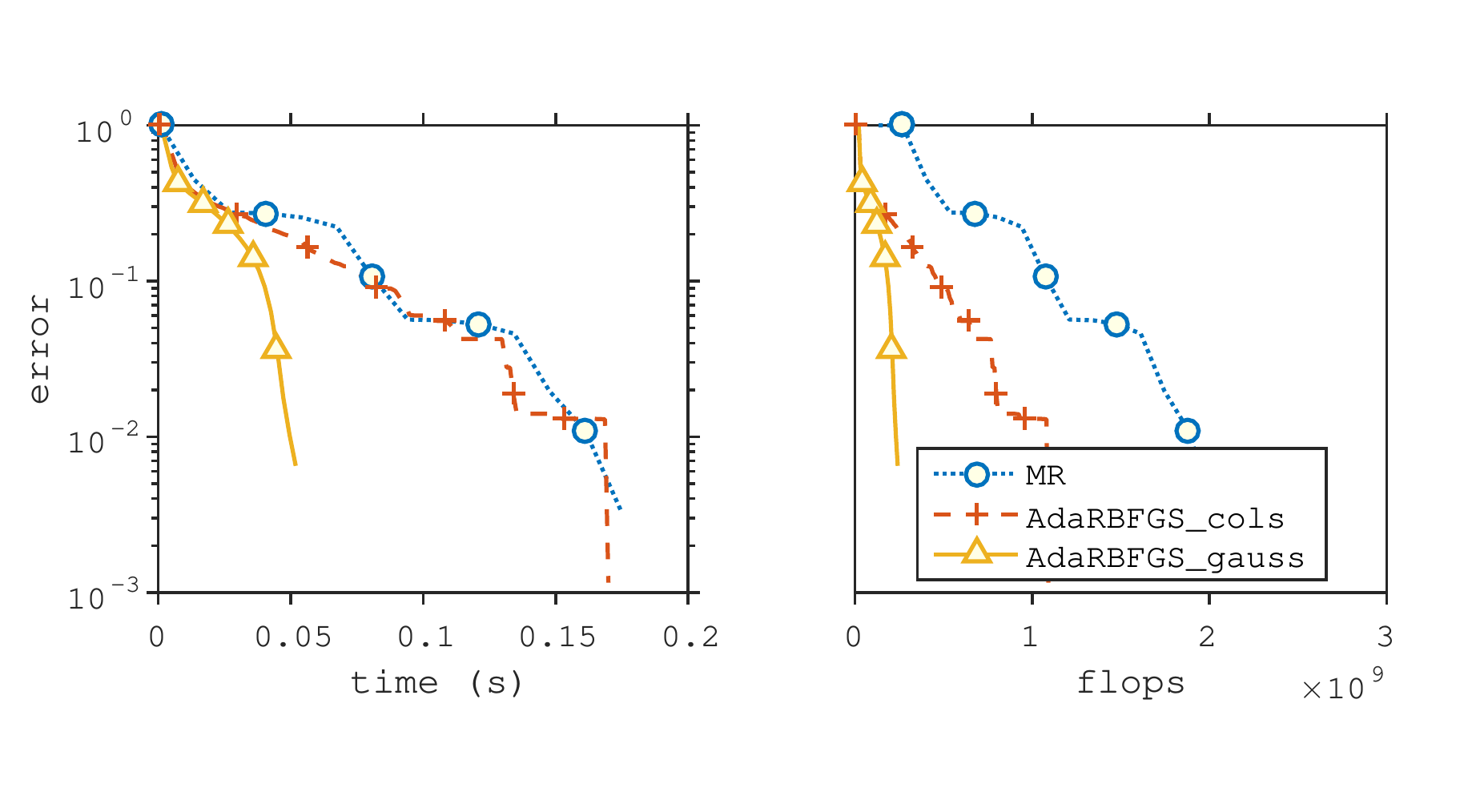}
        \caption{\texttt{protein}}
    \end{subfigure}
        \begin{subfigure}[t]{0.46\textwidth}
        \centering
\includegraphics[width =  \textwidth, trim=30 270 40 280, clip ]{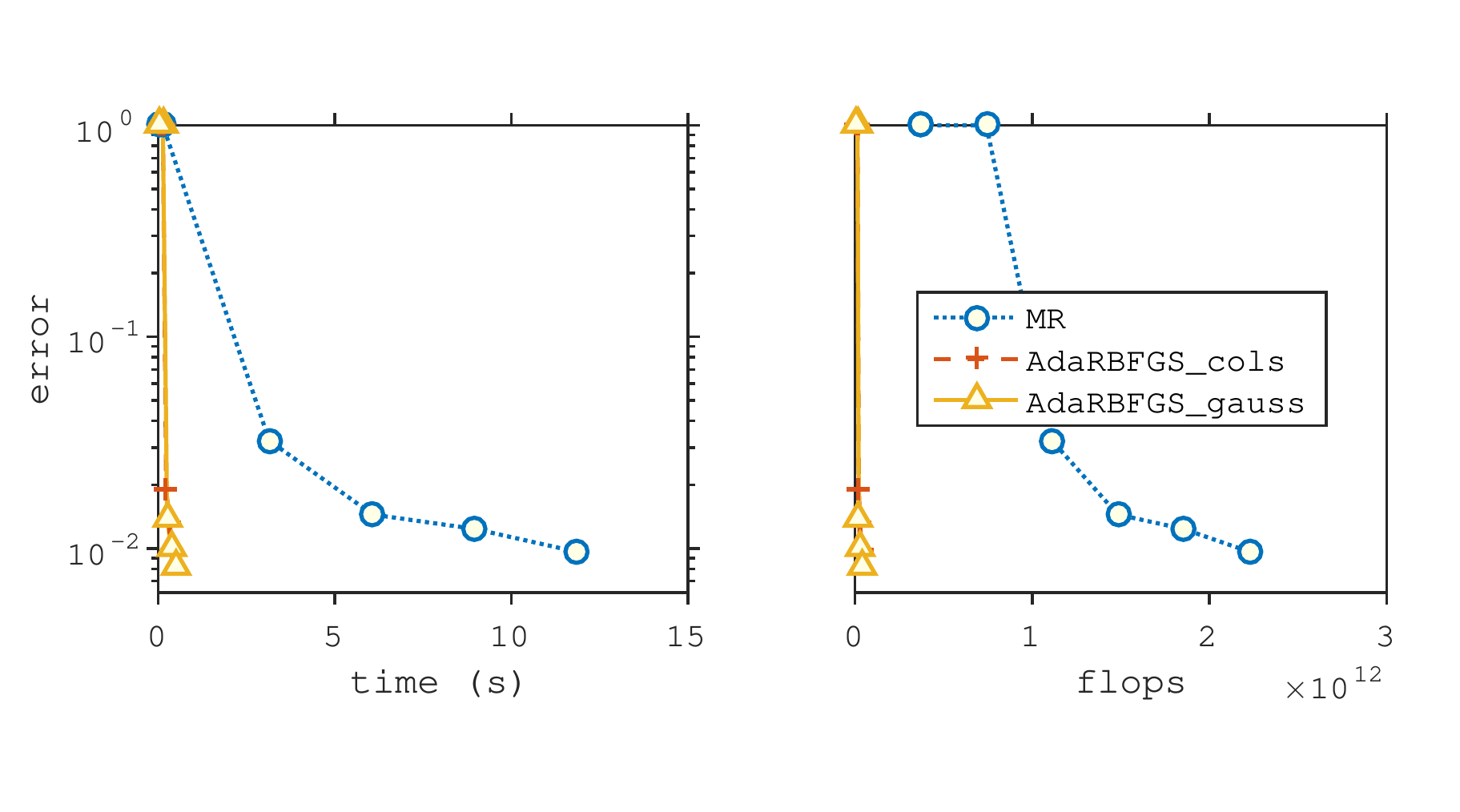}
        \caption{\texttt{gisette\_scale}}
    \end{subfigure}%
  \hspace{0.05\textwidth}
    \begin{subfigure}[t]{0.46\textwidth}
        \centering
\includegraphics[width =  \textwidth, trim= 30 270 40 280, clip ]{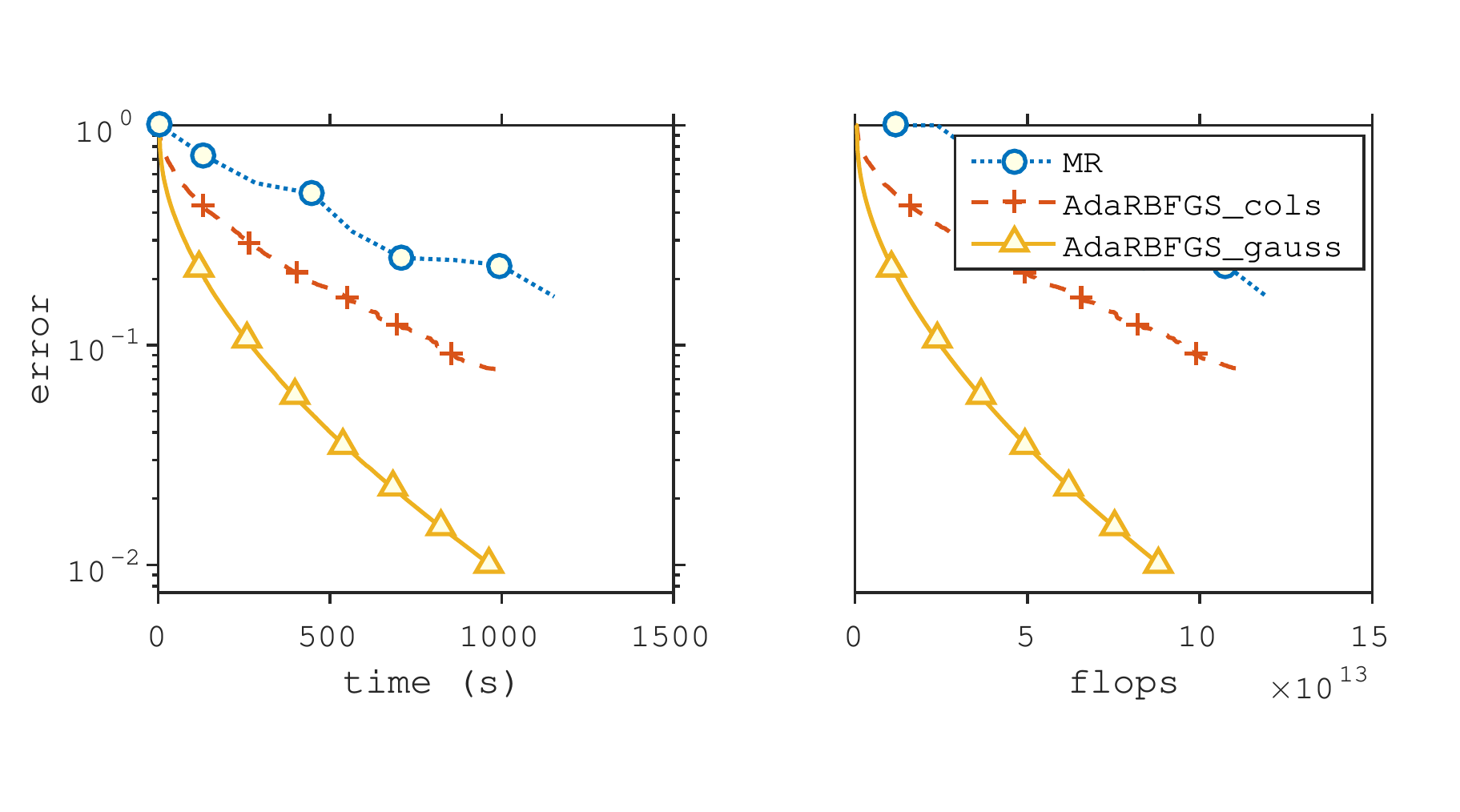}
        \caption{\texttt{real\_sim}}
    \end{subfigure}
    \caption{The performance of Newton-Schulz, MR, AdaRBFGS\_gauss and AdaRBFGS\_cols methods on the Hessian matrix of four LIBSVM test problems: (a) \texttt{aloi}: $(m;n)=(108,000;128)$ (b) \texttt{protein}: $(m; n)=(17,766; 357)$ (c)  \texttt{gisette\_scale}: $(m;n)=(6000; 5000)$ (d) \texttt{real-sim}: $(m;n)=(72,309; 20,958)$. The starting matrix $X_0=I$ was used for all methods.} \label{fig:LIBSVM2}
\end{figure}

\begin{figure}
    \centering
\begin{subfigure}[t]{0.5\textwidth}
        \centering
\includegraphics[width =  \textwidth, trim=0 270 0 260, clip ]{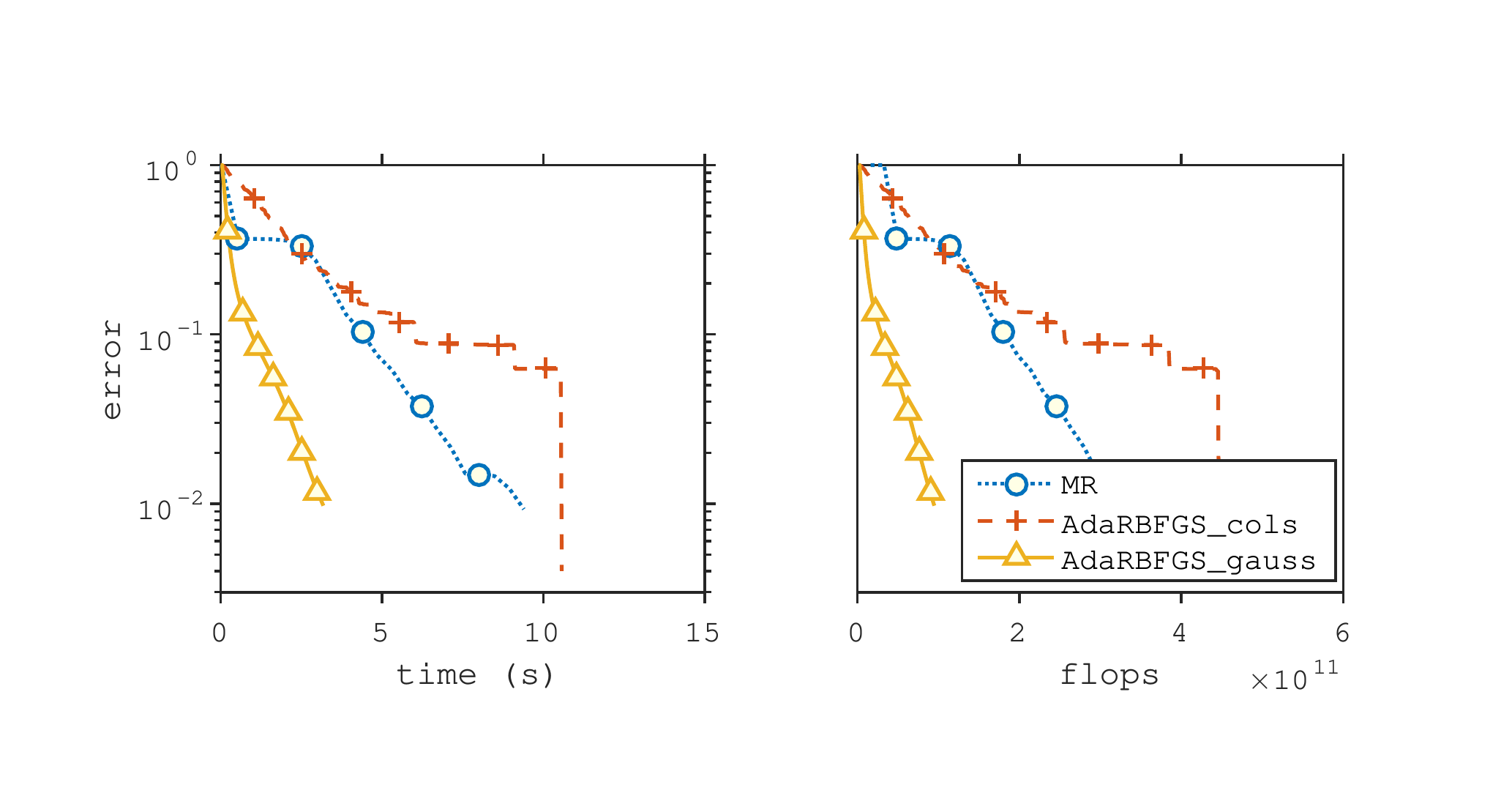}
        \caption{\texttt{Bates/Chem97ZtZ}}
\end{subfigure} 
\begin{subfigure}[t]{0.5\textwidth}
        \centering
\includegraphics[width =  \textwidth, trim= 0 270 0 260, clip ]{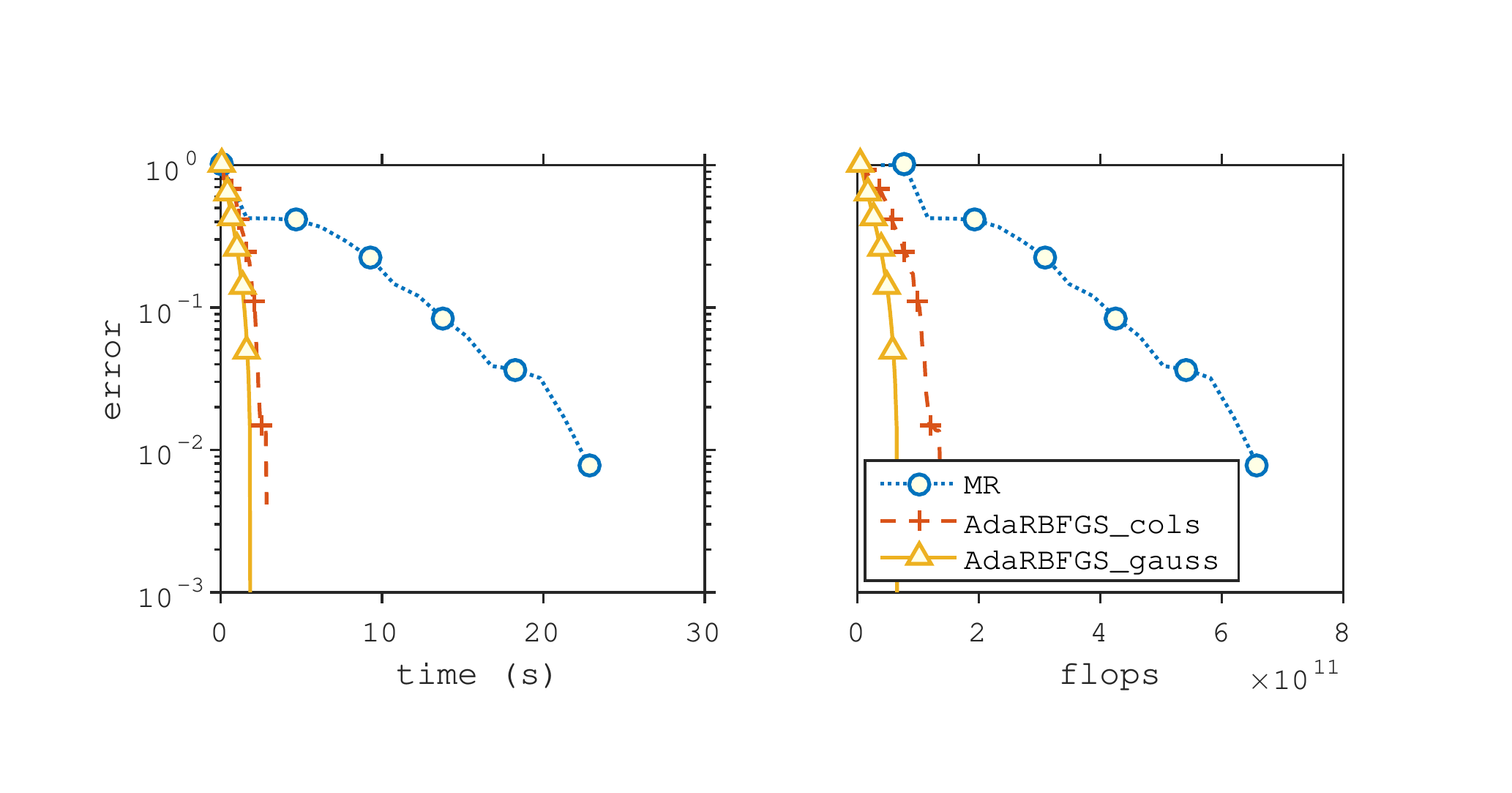}
        \caption{\texttt{FIDAP/ex9}}
\end{subfigure}%
\begin{subfigure}[t]{0.5\textwidth}
        \centering
\includegraphics[width =  \textwidth, trim=0 270 0 260, clip ]{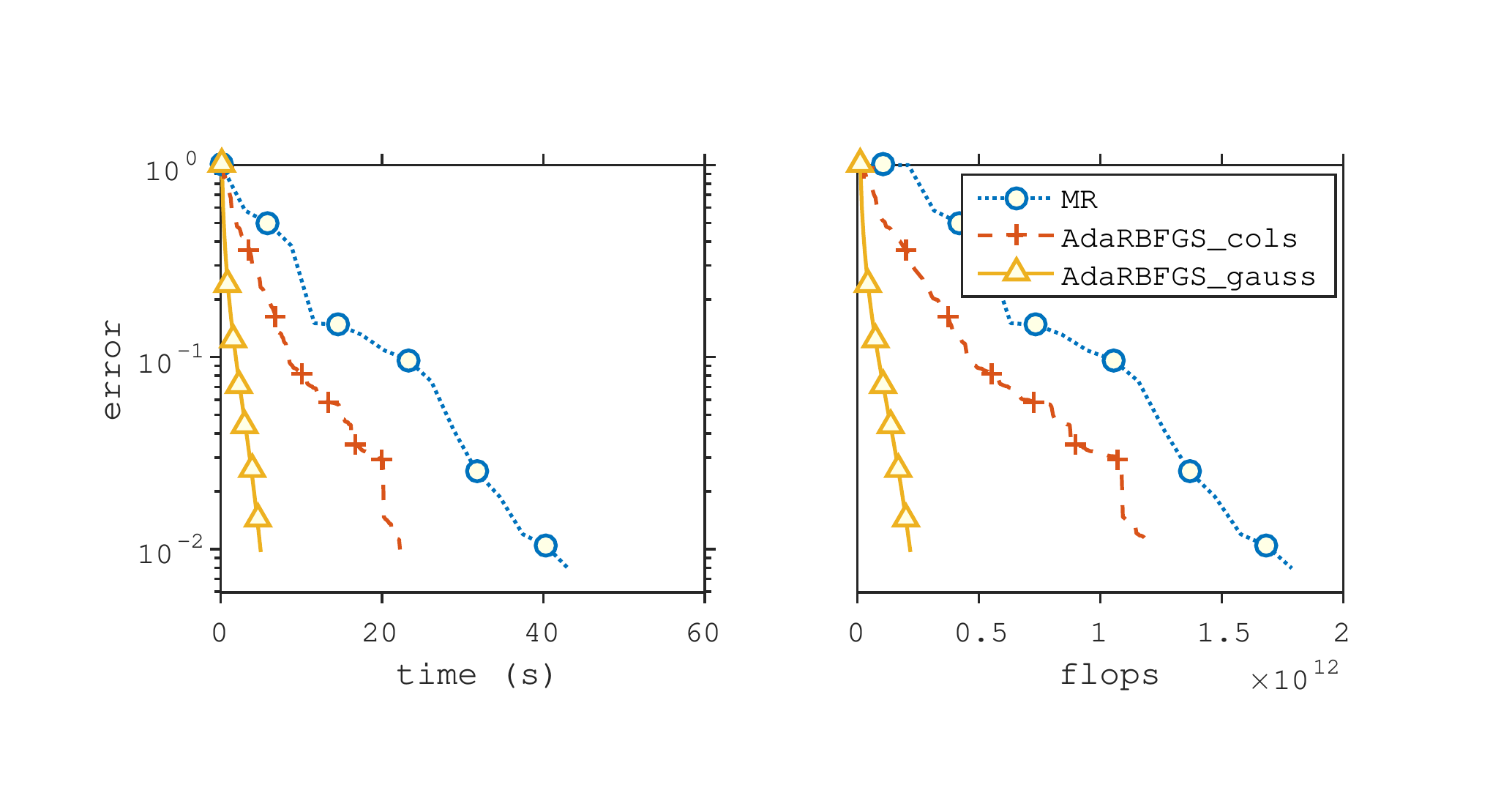}
        \caption{\texttt{Nasa/nasa4704}}
\end{subfigure} 
\begin{subfigure}[t]{0.5\textwidth}
        \centering
\includegraphics[width =  \textwidth, trim=0 270 0 260, clip ]{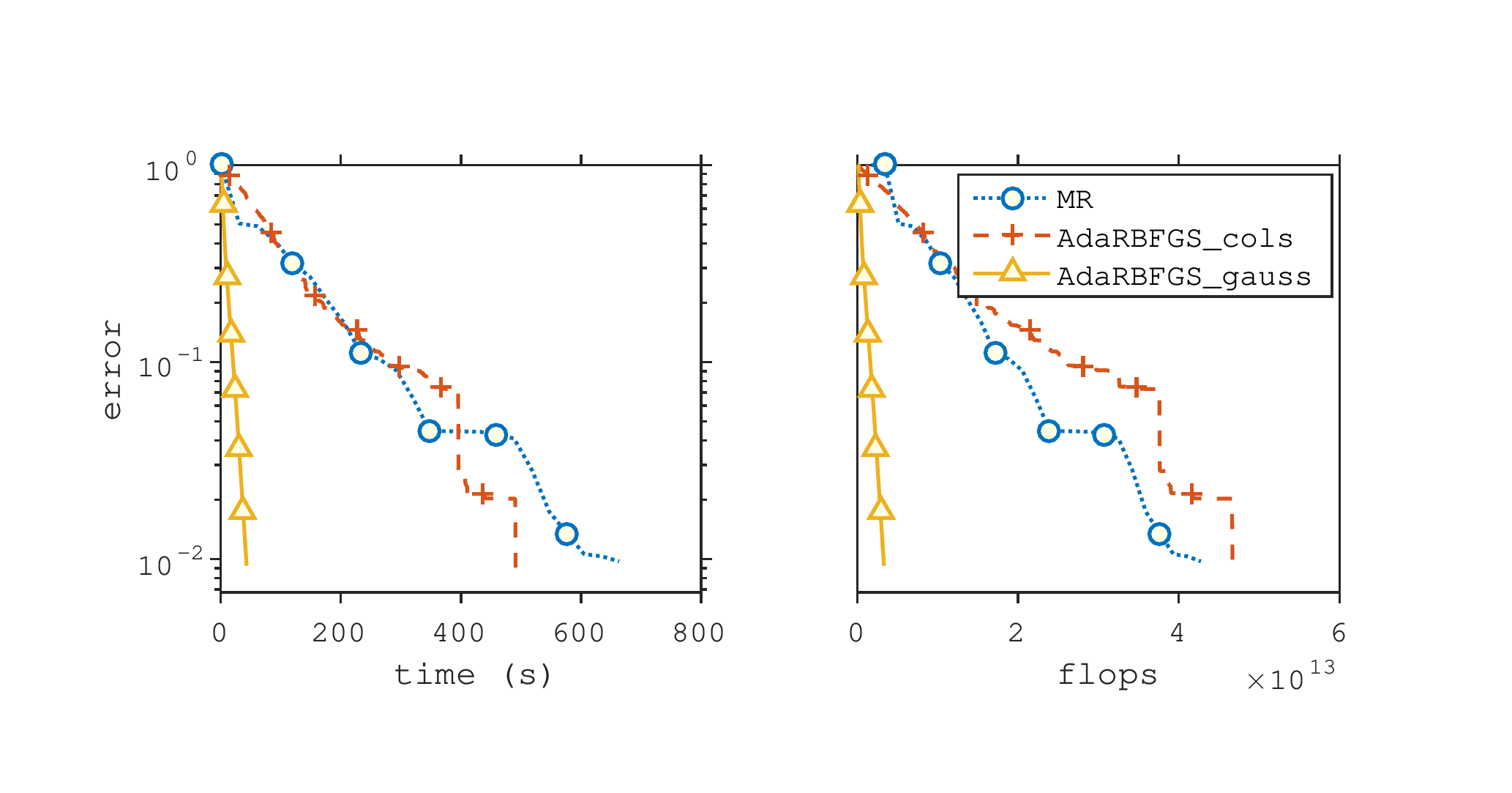}
        \caption{\texttt{HB/bcsstk18}}
\end{subfigure}%
\begin{subfigure}[t]{0.5\textwidth}
        \centering
\includegraphics[width =  \textwidth, trim=0 270 0 260, clip ]{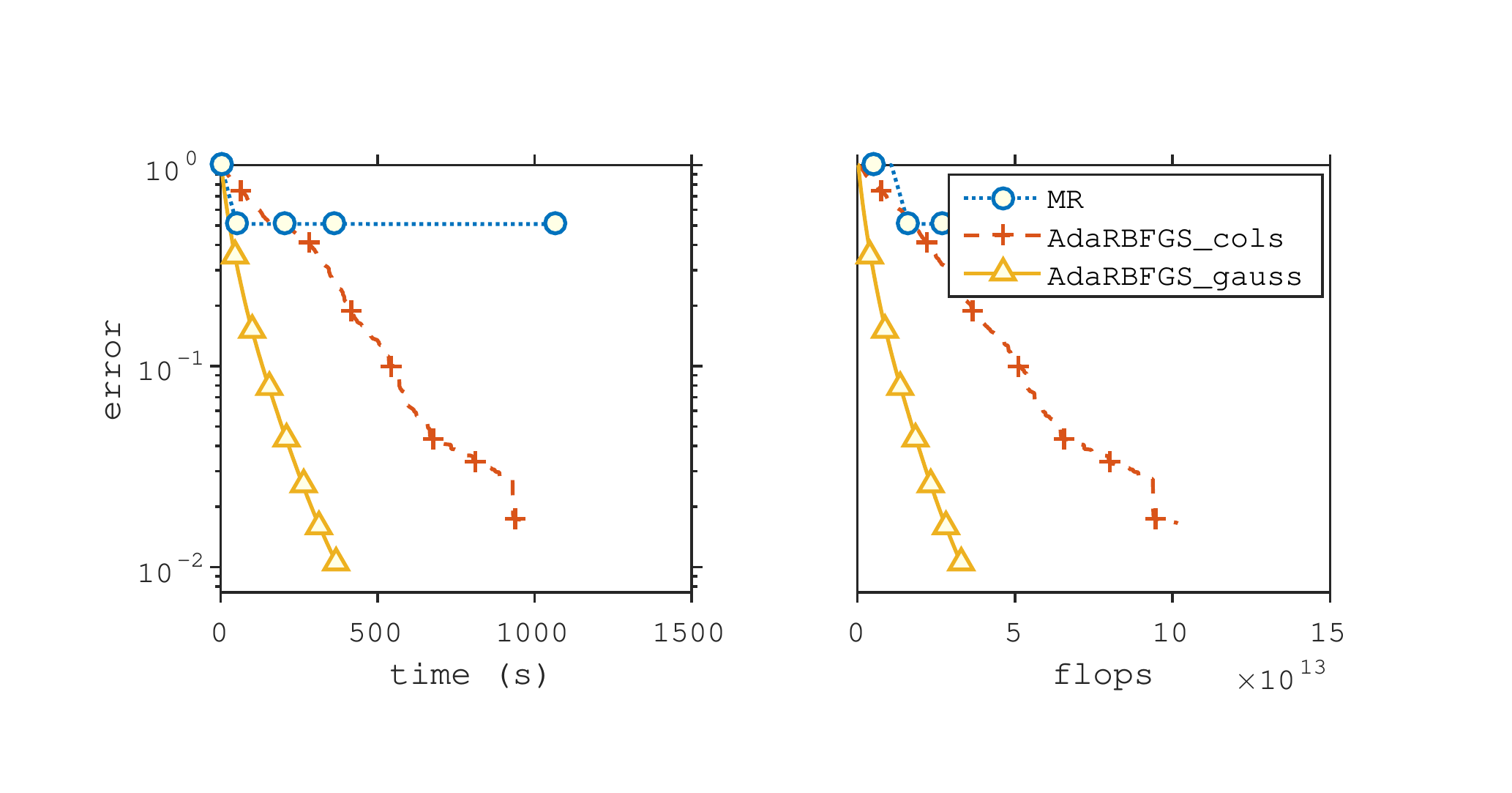}
        \caption{\texttt{Pothen/bodyy4}}
\end{subfigure} 
\begin{subfigure}[t]{0.5\textwidth}
        \centering
\includegraphics[width =  \textwidth, trim=0 270 0 260, clip ]{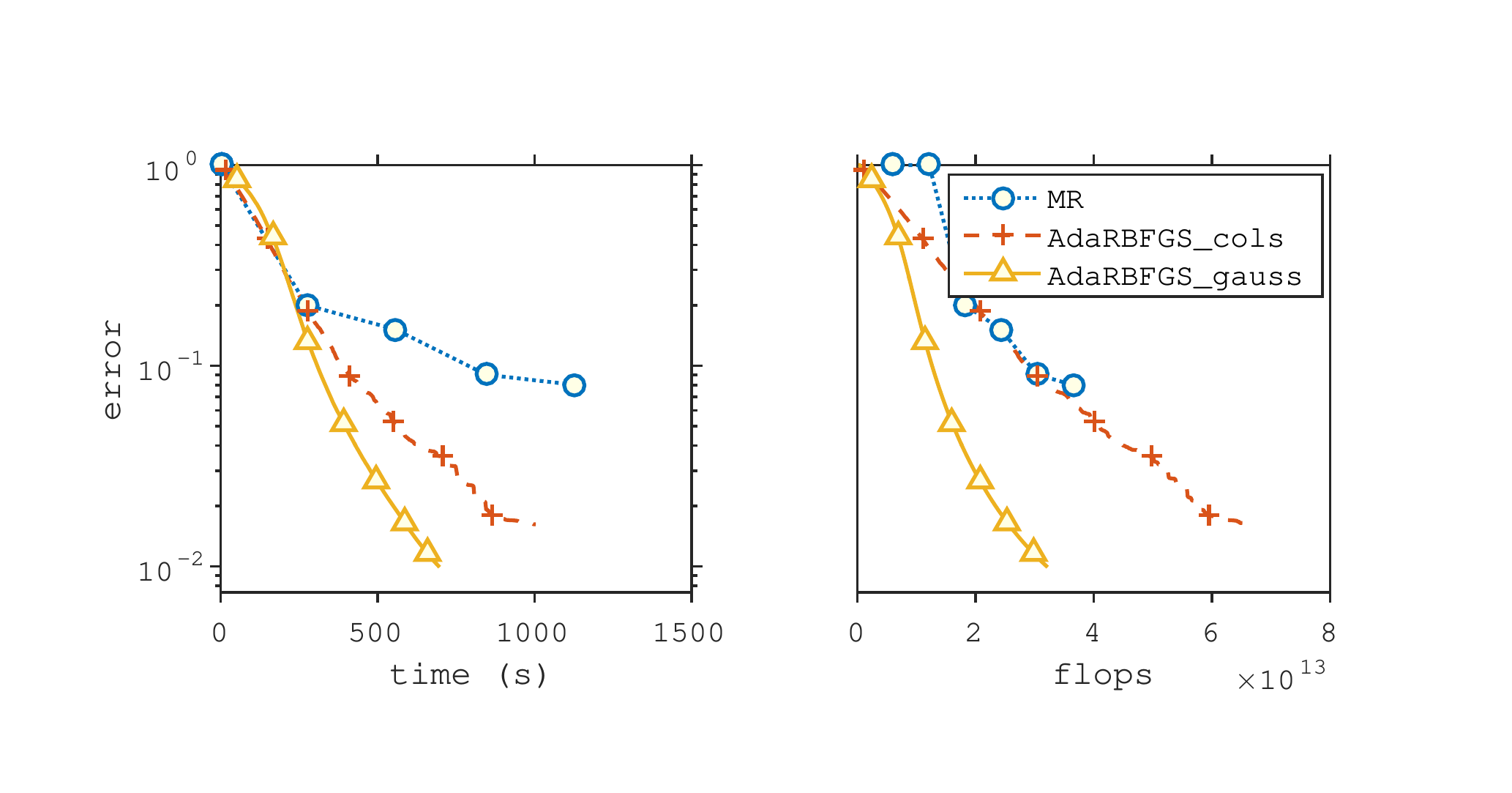}
        \caption{\texttt{ND/nd6k}}
\end{subfigure}%
\begin{subfigure}[t]{0.5\textwidth}
        \centering
\includegraphics[width =  \textwidth, trim=0 270 0 260, clip ]{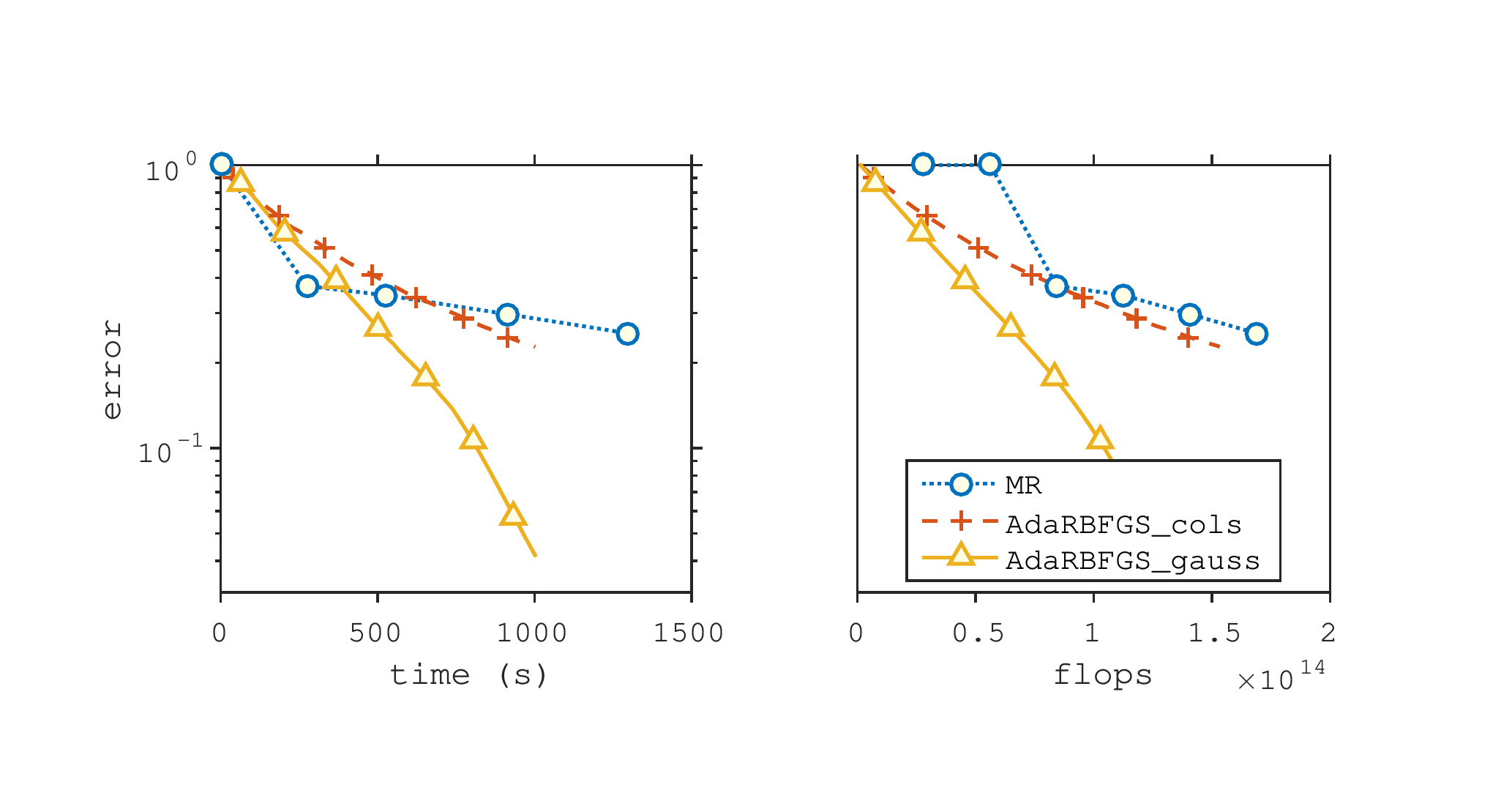}
        \caption{\texttt{GHS\_psdef/wathen100}}
\end{subfigure}%
  \caption{The performance of Newton-Schulz, MR, AdaRBFGS\_gauss and AdaRBFGS\_cols on 
(a) \texttt{Bates-Chem97ZtZ}: $n= 2\,541$,   
  (b) \texttt{FIDAP/ex9}: $n = 3,\,363 $, (c) \texttt{Nasa/nasa4704}: $n= 4\,,704$, 
  (d) \texttt{HB/bcsstk18}: $n=11,\,948$,   (e) \texttt{Pothen/bodyy4}: $n =17,\,546$ (f) \texttt{ND/nd6k}: $n=18,\,000$ (g) \texttt{GHS\_psdef/wathen100}: $n = 30, \,401$. The starting matrix $X_0=I$ was used for all methods. } \label{fig:UF2}
  \end{figure}

\end{document}